\definecolor{dgreen}{rgb}{0.15, 0.6, 0.15}
\definecolor{dblue}{rgb}{0.25, 0.45, 0.95}
\definecolor{dorange}{rgb}{0.95, 0.25, 0}
\theoremstyle{definition}
\newtheorem{counter}{counter}[section]
\newtheorem{definition}[counter]{Definition}
\newtheorem{theorem}[counter]{Theorem}
\newtheorem{proposition}[counter]{Proposition}
\newtheorem{lemma}[counter]{Lemma}
\newtheorem{remark}[counter]{Remark}
\newtheorem{claim}{Claim}
\newtheorem{corollary}[counter]{Corollary}
\newtheorem{example}[counter]{Example}
\newtheorem{convention}{Convention}
\newtheorem*{corollary*}{Corollary}
\newtheorem{problem}{Problem}
\newtheorem{maintheorem}{Main Theorem}
\newcommand{\xto}{\xrightarrow}
\newcommand{\imm}{\looparrowright}
\newcommand{\into}{\hookrightarrow}
\renewcommand{\ge}{\geqslant}
\renewcommand{\epsilon}{\varepsilon}
\renewcommand{\phi}{\varphi}
\newcommand{\hy}{{\rm \mathchar`-}}
\renewcommand{\tilde}{\widetilde}
\newcommand{\Z}{\mathbb Z}
\newcommand{\R}{\mathbb R}
\newcommand{\C}{\mathbb C}
\newcommand{\HH}{\mathbb H}
\newcommand{\pt}{\mathrm{pt.}}
\newcommand{\SO}{\mathrm{SO}}
\newcommand{\SU}{\mathrm{SU}}
\newcommand{\U}{\mathrm{U}}
\newcommand{\Imm}{\mathrm{Imm}}
\newcommand{\Hom}{\mathrm{Hom}}
\newcommand{\Ker}{\operatorname{\mathrm{Ker}}}
\newcommand{\Mon}{\operatorname{\mathrm{Mon}}}
\renewcommand{\u}{\H{u}}
\title[Immersions and Simple singularities]{Regular homotopy classes of links of simple singularities and immersions associated with their Dynkin diagrams}
\author{Masato Tanabe}
\email{tanabe.masato.i8@elms.hokudai.ac.jp}
\address{Department of Mathematics, Graduate School of Science, Hokkaido University, North 10, West 8, Kita-ku, Sapporo 060-0810, Japan}
\begin{document}

\maketitle

\begin{abstract}
Our aim is to determine the regular homotopy classes of immersions related to Arnol'd's simple singularities.
For every type of simple singularities, we determine the regular homotopy class of the inclusion map of the link into the 5-sphere.
We further show that the inclusion map is regularly homotopic to the immersion associated with the corresponding Dynkin diagram, which was constructed by Kinjo.
We prove these by computing the complete invariants of the immersions given by Wu and Saeki--Sz\u cs--Takase.
As an application, we also determine the Smale invariants of Kinjo's immersions.
\end{abstract}

\section{Introduction}\label{section:intro}

Given a complex analytic surface-germ in $\C^3$ with an isolated singularity, an oriented closed 3-manifold, the {\it link} of the singularity, appears within a small $5$-sphere centered at the singularity. The link is isotopic in the 5-sphere to the boundary of a complex analytic surface, called the {\it Milnor fiber} of the singularity. These subjects have been deeply studied in various contexts, such as singularity theory, low-dimensional topology, and contact/symplectic/complex geometry \cite{Mil68, CNP06, Sea06, Sea19}.

In this paper, we study links of isolated surface singularities in $\C^3$ from the viewpoint of immersions through two problems.
We first pose the following problem.

\begin{problem}\label{Q:incl}
{\it
Let $K$ be the link of an isolated surface singularity in $\C^3$.
Then which is the regular homotopy class of the inclusion map $f \colon K \into S^5_\epsilon$ as an immersion?
}
\end{problem}

Let us review the classification of immersions up to regular homotopy. 
In the late fifties, S.~Smale defined a complete invariant, now called the {\it Smale invariant}, for the classification of immersions of spheres into Euclidean spaces \cite{Sma59}. His result includes specific examples such as the classification of plane curves by Whitney--Graustein and the eversion of the 2-sphere in 3-space.
Shortly thereafter, M.~W.~Hirsch generalized Smale's result to immersions between arbitrary manifolds \cite{Hir59}. Thanks to their works, the classification problem of immersions up to regular homotopy was reduced to homotopy theory. These results are known as the Smale--Hirsch theory today.

Based on the theory, W.~T.~Wu studied immersions of oriented 3-manifolds into 5-space.
He gave a complete invariant for them up to regular homotopy. This invariant consists of two components.
The first one is an invariant on the 2-skeleton of the source 3-manifold --- this is named the {\it Wu invariant} in \cite{SST02}. 
The second one is an invariant on the 3-cells, which is an analogue of the Smale invariant --- let us call this the {\it Smale-type invariant}. 
If the normal bundle of a given immersion is trivial, then the Wu invariant is a second cohomology class of the 3-manifold which is a 2-torsion, and the Smale-type invariant is an integer. 
In order to determine the regular homotopy class of a given immersion, it is necessary to compute these two invariants.

However, it has been difficult to compute these invariants, since their geometric meanings were unclear from Wu's original work. The first attempt to resolve the difficulty was made by Saeki--Sz\u cs--Takase \cite{SST02}. They gave more geometric expressions of the two invariants for the case where the normal bundle is trivial. In particular, their work made the Smale-type invariant computable via certain (co)homological data. Their results have been applied to the study of immersions \cite{ST02}, generalized to the case where the normal bundle is non-trivial \cite{Juh05}, and also applied to the study of surface singularities in $\C^3$ \cite{GP24, PS24, PT23}.
In contrast, a framework to compute the Wu invariant has not yet been established. It seems to be due to the following reason: the Wu invariant is defined for a fixed parallelization (trivialization of the tangent bundle) of the 3-manifold. 
Furthermore, the Wu invariant takes all possible values by changing parallelizations (Appendix \ref{app:switch}).
Hence, it is crucial to choose a nice parallelization depending on situation.

Let us return to our first problem. We give an answer to Problem \ref{Q:incl} by computing the Wu and Smale-type invariants.

\begin{maintheorem}\label{main:determination-incl}
{\em
Under the setup of Problem \ref{Q:incl}, the Wu invariant $c_\tau(f)$ with respect to any almost contact parallelization $\tau$ and the Smale-type invariant $i(f)$ are
\[(c_\tau(f), i(f)) = \left(0, \frac{3}{2}(\sigma(F) - \alpha(K)) \right) \in H^2(K; \Z) \times \Z,\]
where $F$ is the Milnor fiber of the singularity, $\sigma(F)$ is the signature of $F$, and $\alpha(K)$ is an integer determined by the torsion part of $H_1(K; \Z)$ (see Definition \ref{dfn:alpha}).
}
\end{maintheorem}

Here is an outline of its proof.
The Smale-type invariant is computed by Saeki--Sz\u cs--Takase's formula. 
To compute the Wu invariant, we focus on the fact that the inclusion map $f \colon K \into S^5_\epsilon$ forms a contact embedding into the standard contact 5-sphere. 
In fact, this contact structure on $K$ can be trivialized as a 2-plane field by the result of Kasuya \cite{Kas16}. Hence we can parallelize $K$ along this trivialization. We call this kind of parallelization an {\it almost contact parallelization} in this paper (Definition \ref{dfn:al-ctct-paral}).
Then we can prove that the Wu invariant with respect to any almost contact parallelization vanishes for contact embeddings (Proposition \ref{Wuinv_ctct_emb}).
We note that Katanaga--N\'emethi--Sz\u cs studied a problem corresponding to Problem \ref{Q:incl} for hypersurface singularities in $\C^4$ of type $A$ \cite{KNS14}.

\vspace{6pt}

Our second problem focuses on {\it simple singularities}, the most fundamental class in the classification of holomorphic function-germs. V.~I.~Arnol'd classified them into types of $A$, $D$, and $E$ \cite{AGV85}. It is known that their links admit several special structures, such as quotient spaces under actions of finite subgroups of $\SU(2)$ on $S^3$ \cite{Kle56} and plumbed 3-manifolds along Dynkin diagrams.
These manifolds and structures also have been deeply studied.
Now, we recall two works which motivated our problem.

For any holomorphic map-germ $\Phi \colon (\C^2, 0) \to (\C^3, 0)$ which is singular only at the origin, A.~N\'emethi and G.~Pint\'er \cite{NP15} obtained an immersion $\Phi|_\mathfrak{S} \colon S^3 \imm S^5_\epsilon$ by taking the preimage $\mathfrak{S} = \Phi^{-1}(S^5_\epsilon) \cong S^3$.
They discovered a formula of the Smale invariant of $\Phi|_\mathfrak{S}$ by counting complex singularities of a holomorphic perturbation of $\Phi$.
Moreover, as examples of map-germs $\Phi$, they considered parametrizations of simple singularities.
Then they determined the regular homotopy classes of $\Phi|_\mathfrak{S}$ for all $A$-$D$-$E$ cases.
Note that in these cases, each immersion is factorized into $\Phi|_\mathfrak{S} = f \circ p$, where $f \colon K \into S^5_\epsilon$ is the inclusion map of the link and $p \colon S^3 \to K$ is the universal covering map.

S.~Kinjo \cite{Kin15} associated an immersion to each of the Dynkin diagrams of type $A$-$D$-$E$.
%denoted by $\bar{g}_G \circ p \colon S^3 \imm \R^4$ in this paper.
The construction is as follows: let $M(G)$ denote the plumbed 3-manifold along the diagram $G$, which is orientation-preservingly diffeomorphic to the link of the corresponding singularity. 
Construct an immersion $\bar{g}_G = g_G \circ r \colon M(G) \imm \R^4$, where $r \colon M(G) \to M(G)$ is an orientation-reversing diffeomorphism. Pull-back $\bar{g}_G$ by the universal covering map $p \colon S^3 \to M(G)$.
Then she determined the regular homotopy classes of $\bar{g}_G \circ p$ for the $A$-$D$ cases by computing their Smale invariants. She employed Ekholm--Takase's formula \cite{ET11}, constructing certain $C^\infty$ maps so-called singular Seifert surfaces of the immersions and counting their real singularities.
However, the method did not apply to the $E$ cases, leaving their Smale invariants unknown.

The works of N\'emethi--Pint\'er and Kinjo had a connection despite the difference of their natures. They pointed out it as follows: let $j \colon \R^4 \into \R^5$ denote the standard inclusion. Then, for each of the $A$-$D$ cases, the Smale invariants of $f \circ p$ and $j \circ \bar{g}_G \circ p$ coincide with opposite sign.
This implies that $f \circ p$ and $j \circ g_G \circ p$ are regularly homotopic (under removing one point from $S^5_\epsilon$). 
Then the following problem naturally arises.

\begin{problem}\label{Q:incl-and-pushedKinjo}
{\it
Consider any of the singularities of type $E$.
\begin{itemize}
\item[(a)] Which is the regular homotopy class of Kinjo's immersion $\bar{g}_G \circ p \colon S^3 \imm \R^4$?
\item[(b)] Are the two immersions $f \circ p$ and $j \circ g_G \circ p \colon S^3 \imm \R^5$ regularly homotopic?
\end{itemize}
}
\end{problem}

We solve these by showing that {\it two immersions $f$ and $j \circ g_G$ are actually regularly homotopic}. More precisely, we show the following assertion.

\begin{maintheorem}\label{main:determination-Kinjo}
{\em
For each type of simple singularities and the corresponding Dynkin diagram $G$, 
the Wu invariant $c_\tau(j \circ g_G)$ with respect to any almost contact parallelization $\tau$ and the Smale-type invariant $i(j \circ g_G)$ are
\[(c_\tau(j \circ g_G), i(j \circ g_G)) = \left(0, \frac{3}{2}(-\# V(G) - \alpha(M(G))) \right) \in H^2(M(G); \Z) \times \Z,\]
where $\# V(G)$ is the number of vertices of $G$.
}
\end{maintheorem}

As in the proof of Main Theorem \ref{main:determination-incl}, we compute the Smale-type invariant by Saeki--Sz\u cs--Takase's formula. 
To compute the Wu invariant, we consider an almost contact structure on $M(G)$ and an almost contact parallelization along this (\S \ref{subsection:reconstruction-identification}). However, the reason for the vanishing of the Wu invariant differs from the previous case. We regularly homotope the immersion $g_G$ so that the image of the almost contact structure forms complex tangency for the standard complex structure on $\R^4 = \C^2$. 
Then we prove that if we push-forward this immersion into $\R^5$, then the Wu invariant with respect to the almost contact parallelization vanishes (Proposition \ref{Wuinv_hol_imm}).

In fact, we can identify two manifolds $K$ and $M(G)$ as almost contact manifolds (\S \ref{subsubsection:identification}).
Then Main Theorems \ref{main:determination-incl} and \ref{main:determination-Kinjo} implies the following, by the coincidence of their complete invariants: for each of $A$-$D$-$E$ cases, the inclusion map $f \colon K \into S^5_\epsilon$ and the immersion $j \circ g_G \colon M(G) \imm \R^5$ are regularly homotopic (Corollary \ref{cor:reg-htpicity}).
It is immediate that two immersions $f \circ p$ and $j \circ g_G \circ p$ are regularly homotopic. In particular, we obtain the affirmative answer to Problem \ref{Q:incl-and-pushedKinjo}(b).

As an application of this regular homotopicity, we also determine the regular homotopy classes of Kinjo's immersions $\bar{g}_G \circ p$ for all $A$-$D$-$E$ cases, by computing the Smale invariants. 
This result recovers that of Kinjo for the $A$-$D$ cases, and also gives the answer to Problem \ref{Q:incl-and-pushedKinjo}(a).

\begin{corollary*}[$=$ Theorem \ref{thm:determination-Kinjo-Smale}]\label{main:determination-Kinjo-Smale}
{\em
For each type of simple singularities and the corresponding Dynkin diagram $G$, the Smale invariant $\Omega(\bar{g}_G \circ p) \in \pi_3(\SO(4)) \cong \Z \oplus \Z$ (see \S \ref{subsection:S3R4} for the choice of generators) is
\begin{align*}
\Omega(\bar{g}_G \circ p) &= 
\begin{cases}
(n^2 - 1, 0) & \text{ for } A_{n-1} \ (n \ge 2); \\
(4n^2 + 12n - 1, 0) & \text{ for } D_{n+2} \ (n \ge 2); \\
(167, 0) & \text{ for } E_6; \\
(383, 0) & \text{ for } E_7; \\
(1079, 0) & \text{ for } E_8.
\end{cases}
\end{align*}
}
\end{corollary*}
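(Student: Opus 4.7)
The plan is to leverage the regular homotopicity of $f$ and $j \circ g_G$ coming from the Main Theorems, combined with N\'emethi--Pint\'er's formula for $\Omega(f\circ p)$, to pin down the image of $\Omega(\bar{g}_G \circ p)$ under the stabilization $\pi_3(\SO(4)) \to \pi_3(\SO(5))$; then separately determine the remaining coordinate lying in the kernel of this stabilization.

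First, by Main Theorems \ref{main:determination-incl} and \ref{main:determination-Kinjo} together with the almost contact identification of $K$ and $M(G)$ (\S \ref{subsubsection:identification}), the pairs of complete invariants $(c_\tau, i)$ agree for $f$ and $j \circ g_G$. Hence the completeness of Wu's invariant system yields Corollary \ref{cor:reg-htpicity}: these two immersions are regularly homotopic. Composing with the universal cover $p \colon S^3 \to M(G)$ gives $f \circ p \simeq j \circ g_G \circ p$ as immersions $S^3 \imm \R^5$, whence
\[\Omega(j \circ g_G \circ p) = \Omega(f \circ p) \in \pi_3(\SO(5)) \cong \Z.\]
Since $r$ is orientation-reversing, any lift $\tilde r \colon S^3 \to S^3$ is too, and composition with an orientation-reversing reparametrization negates the Smale invariant; hence $\Omega(j \circ \bar{g}_G \circ p) = -\Omega(f \circ p)$. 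N\'emethi--Pint\'er \cite{NP15} computed the right-hand side explicitly for every $A$-$D$-$E$ type by counting complex singularities of a holomorphic perturbation of $\Phi$, producing the integers $n^2-1$, $4n^2+12n-1$, $167$, $383$, $1079$.

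The stabilization $\pi_3(\SO(4)) \to \pi_3(\SO(5))$ is surjective with kernel $\Z$ (from the fibration $\SO(4) \to \SO(5) \to S^4$), so the preceding step fixes only one of the two integer coordinates of $\Omega(\bar{g}_G \circ p) \in \pi_3(\SO(4)) \cong \Z \oplus \Z$ in the splitting chosen in \S \ref{subsection:S3R4}. The remaining (kernel) coordinate requires a finer invariant visible in $\R^4$ but lost under stabilization into $\R^5$---for instance, the Ekholm--Takase count on a singular Seifert surface, or an analogous signed count inside a $4$-dimensional filling. The main obstacle is evaluating this kernel coordinate uniformly for all types: Kinjo handled $A$-$D$ by a direct Ekholm--Takase calculation that fails for $E$, so I would look for a uniform structural reason the kernel coordinate vanishes---for example, that after a regular homotopy $\bar{g}_G$ lies within a tubular neighborhood of a $3$-dimensional affine subspace of $\R^4$, or that the complex-tangency arrangement behind the proof of Main Theorem \ref{main:determination-Kinjo} (cf.\ Proposition \ref{Wuinv_hol_imm}) already carries enough framing data to force the kernel coordinate to be zero simultaneously for all $A$-$D$-$E$ cases. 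This step is what turns the $\R^5$-statement of the Main Theorems into the $\R^4$-statement of the corollary, and is the technical heart of the proof.
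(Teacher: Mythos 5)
The first half of your argument coincides with the paper's: Corollary \ref{cor:reg-htpicity} together with Theorem \ref{thm:NP15} and Proposition \ref{thm:relationship-push} determines the single linear combination $j_*\Omega(\bar g_G\circ p)=a+2b$, where $\Omega(\bar g_G\circ p)=(a,b)$. But at that point you stop: you correctly observe that a second, independent piece of information is needed to recover $(a,b)\in\Z\oplus\Z$, and then you only list candidate strategies (``a finer invariant visible in $\R^4$,'' ``a uniform structural reason the kernel coordinate vanishes'') without carrying any of them out. Since that second step is precisely what upgrades the $\R^5$-statement to the $\R^4$-statement, this is a genuine gap, not a stylistic omission. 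A further small imprecision: the kernel of $j_*$ is generated by $(-2,1)$, not by $(0,1)$, so ``the kernel coordinate'' is not literally the second entry $b$, and arguing that it ``vanishes'' is not by itself the right target.

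The paper closes the gap by computing the \emph{first} coordinate $a$ directly, with no singular Seifert surface and no case-by-case singularity count. By the first component of the Ekholm--Takase formula (Theorem \ref{thm:ET11}), $a=D(\bar g_G\circ p)-1$, where $D$ is the normal mapping degree. The normal degree is multiplicative under the degree-$\#\Gamma$ covering $p$, so $D(\bar g_G\circ p)=\#\Gamma\cdot D(\bar g_G)$; and since $\bar g_G$ bounds the immersion $\hat g_G\colon X(G)\imm\R^4$ of the plumbed $4$-manifold, one has $D(\bar g_G)=\chi(X(G))=1+\#V(G)$ by \cite[Theorem 2.2(b)]{KM99}. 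Hence $a=\#\Gamma\,(1+\#V(G))-1$, which evaluates to $n^2-1$, $4n^2+12n-1$, $167$, $383$, $1079$ in the respective cases; comparing with $a+2b=-\Omega(\Phi|_{\mathfrak S})$ from your first step then forces $b=0$. If you want to complete your proposal, this normal-degree computation is the missing idea; your speculative alternatives (confining $\bar g_G$ near a $3$-dimensional affine subspace, or extracting framing data from the complex-tangency construction) are not what the paper does and would need substantial justification of their own.
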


To prove this, we additionally employs N\'emethi--Pint\'er's result and the computation of the normal mapping degree of $\bar{g}_G \circ p$. This argument is quite different from that of Kinjo (Remark \ref{rmk:argument}).
Note that it was expected by Pint\'er to find any direct relationship between N\'emethi--Pint\'er's and Kinjo's immersions in his thesis \cite{Pin18}.
We realized the expectation and provided new aspects for the immersions, in the above sense.

We end the introduction, summarizing our study in Figure \ref{fig:overview}.

\begin{figure}[ht]
%\includegraphics[width=16cm]{}
%\begin{comment}
\begin{tikzpicture}[boxnode/.style={shape=rectangle, draw=black, fill=white, text centered, minimum height=10mm}]
   \node[boxnode, text width=50mm] (a) at (0,3) {$f \colon K \into S^5_\epsilon$\\Main Theorem \ref{main:determination-incl} for $A$-$D$-$E$}; 
   \node[boxnode, text width=50mm] (b) at (6.2,3) {$\Phi|_{\mathfrak{S}} = f \circ p \colon S^3 \imm S^5_\epsilon$\\\cite{NP15} for $A$-$D$-$E$};
   \node[boxnode, text width=50mm] (c) at (0,0) {$j \circ g_G \colon M(G) \imm \R^5$\\Main Theorem \ref{main:determination-Kinjo} for $A$-$D$-$E$}; 
   \node[boxnode, text width=50mm] (d) at (6.2,0) {$j \circ g_G \circ p \colon S^3 \imm \R^5$\\\cite{Kin15} for $A$-$D$, \\ Corollary \ref{cor:reg-htpicity} for $A$-$D$-$E$}; 
   \node[boxnode, text width=35mm] (e) at (-3,-2.5) {$g_G \colon M(G) \imm \R^4$}; 
   \node[boxnode, text width=50mm] (f) at (3.5,-2.5) {$g_G \circ p \colon S^3 \imm \R^4$\\\cite{Kin15} for $A$-$D$, \\ Theorem \ref{thm:determination-Kinjo-Smale} for $A$-$D$-$E$}; 
   \draw[->] (a) -- node[yshift=5pt] {$\circ p$} (b);
   \draw[->] (c) -- node[yshift=5pt] {$\circ p$} (d);
   \draw[->] (e) -- node[yshift=5pt] {$\circ p$} (f);
   \draw[->] (e) -- node[xshift=-10pt] {$j \circ$} (c);
   \draw[->] (f) -- node[xshift=-10pt] {$j \circ$} (d);
   \draw[<->, dashed] (a) -- node[align=center,xshift=-70pt] {{\it regularly homotopic} \\ Corollary \ref{cor:reg-htpicity} for $A$-$D$-$E$} (c);
   \draw[<->, dashed] (b) -- node[align=center,xshift=70pt] {{\it regularly homotopic} \\ \cite{Kin15, NP15} for $A$-$D$, \\ Corollary \ref{cor:reg-htpicity} for $A$-$D$-$E$} (d);
\end{tikzpicture}
%\end{comment}
\caption{Overview of our study}
\label{fig:overview}
\end{figure}

\subsection*{Organization}

In \S \ref{section:surf-sing}, we recall basic notions on simple singularities and introduce immersions which are our main subjects.
\S \ref{section:imm-generality} and \S \ref{section:al_ctct} are devoted to recall immersions of oriented 3-manifolds and the notion of almost contact structures.
In \S \ref{section:Wuinv}, we show general properties of the Wu invariant.
In \S \ref{section:main}, we show Main Theorems \ref{main:determination-incl} and \ref{main:determination-Kinjo} and give their applications.
In Appendix \ref{app:switch}, we discuss further properties of the Wu invariant.

In this paper, by {\it manifold} and {\it map}, we always mean that of class $C^\infty$.

\subsection*{Acknowledgement}

The author would like to thank Naohiko Kasuya, Jiro Adachi, Sakumi Sugawara, Gerg\H{o} Pint\'er for helpful communications.
Naohiko Kasuya gave me helpful advice, discussions, and encouragements throughout this study.
Jiro Adachi and Sakumi Sugawara suggested me to study the topics in Appendix \ref{app:switch}.
Gerg\H{o} Pint\'er kindly informed me numerous literature, noticed the fact stated in Remark \ref{rmk:intrinsic}, and suggested and helped me to show Theorem \ref{thm:determination-Kinjo-Smale}.

The author also wishes to thank the members of Saturday Topology Seminar, Toru Ohmoto, and an anonymous referee for giving invaluable comments on the early and first versions of this paper.

This work was supported by JST SPRING, Grant Number JPMJSP2119.

\section{Simple singularities and their immersions}\label{section:surf-sing}

This section is devoted to review simple singularities and introduce immersions related to them, which are our main subjects.
See \cite{Mil68, Sea06, Kas15, Sea19} for details.

\subsection{Basic notions}

Let $h \colon (\C^3, 0) \to (\C, 0)$ be a holomorphic function-germ with an isolated singularity at the origin.
The germ itself or its zero set-germ $(h^{-1}(0), 0) \subset (\C^3, 0)$ are called an {\it isolated surface singularity in $\C^3$}.

\begin{definition}[link and Milnor fiber]
For a sufficiently small number $0 < \epsilon \ll 1$, the intersection $K \coloneqq h^{-1}(0) \cap S^5_\epsilon$ forms a closed oriented 3-manifold embedded into $S^5_\epsilon$. We call $K$ the {\it link} of the singularity.
In addition, for a sufficiently small number $0 < \delta \ll \epsilon$, the intersection $F \coloneqq h^{-1}(\delta) \cap B^6_\epsilon$ forms a compact oriented 4-manifold with boundary, properly embedded into $B^6_\epsilon$. We call $F$ the {\it Milnor fiber} of the singularity.
\end{definition}

Notice that the link $K$ is isotopic to the boundary $\partial F$ of the Milnor fiber in $S^5_\epsilon$.
We often identify them throughout this paper.

\subsection{Simple singularities}

For singularities of function-germs $(\C^3, 0) \to (\C, 0)$, Arnol'd introduced the notion of modality and classified the singularities up to modality 0, 1, and 2 \cite{AGV85}.
Singularities of modality 0, which appear on the most earlier hierarchy in the classification, are called {\it simple singularities}, {\it Kleinian singularities}, {\it du Val singularities}, or {\it rational double points}.
All types of simple singularities are listed as follows.

\begin{itemize}
\item $A_{n-1}$ ($n \ge 2$): $x^2+y^2+z^n$
\item $D_{n+2}$ ($n \ge 2$): $x^2+y^2z+z^{n+1}$
\item $E_6$: $x^2+y^3+z^4$
\item $E_7$: $x^2+y^3+yz^3$
\item $E_8$: $x^2+y^3+z^5$  
\end{itemize}

As is well-known, their links and Milnor fibers have geometric structures, and can be studied from various viewpoints. Let us review specific two.

\subsubsection{Quotient structures}\label{subsubsection:quotient}

Klein showed that for each type of simple singularities, their zero set-germ in $(\C^3, 0)$ is isomorphic to the quotient of $(\C^2, 0)$ by a finite subgroup of $\SU(2)$  \cite{Kle56} (see also \cite{Mil75}).
The corresponding subgroups are listed as follows.

\begin{itemize}
\item $A_{n-1}$ ($n \ge 2$): the cyclic group $C_n$ of order $n$
\item $D_{n+2}$ ($n \ge 2$): the binary dihedral group $\mathrm{Dic}_n$ of order $4n$
\item $E_6$: the binary tetrahedral group $2T$, whose order is 24
\item $E_7$: the binary octahedral group $2O$, whose order is 48
\item $E_8$: the binary icosahedral group $2I$, whose order is 120
\end{itemize}
Let $\Gamma \subset \SU(2)$ be one of these groups.
The left action of $\Gamma$ on $\C^2$ can be restricted to a free action on the 3-sphere $S^3 \subset \C^2$.
Then the link $K$ is diffeomorphic to the quotient space $\Gamma \backslash S^3$, i.e., the link $K$ admits the universal covering map $p \colon S^3 \to K$ whose covering degree is the order of the group $\Gamma$.
Note that by the invariant polynomial theory, holomorphic map-germs $\Phi \colon (\C^2, 0) \to (\C^3, 0)$ parametrizing simple singularities are known,
e.g.,
\[(s, t) \mapsto \left( \frac{1}{2}(s^n-t^n), \frac{\sqrt{-1}}{2}(s^n+t^n), st \right), \quad (s, t) \mapsto \left( \frac{st}{2}(s^{2n}-t^{2n}), \frac{\sqrt{-1}}{2}(s^{2n}+t^{2n}), s^2t^2 \right)\]
for the singularities of types $A_{n-1}$ and $D_{n+2}$, respectively. 
%\item $E_6$: $(s, t) \mapsto (st(s^4-t^4), s^4+2\sqrt{-3}s^2t^2+t^4, s^4-2\sqrt{-3}s^2t^2+t^4)$
%\item $E_7$: $(s, t) \mapsto (st(s^4-t^4), s^8+14s^4+t^8, (s^4+t^4)((s^4+t^4)^2 - 36s^4t^4))$
%\item $E_8$: $(s, t) \mapsto (s^{30}+t^{30}+522(s^{25}t^5 - s^5t^{25})-10005(s^{20}t^{10}+s^{10}t^{20}), -(s^{20}+t^{20})+228(s^{15}t^5-s^5t^{15})-494s^{10}t^{10}, st(s^{10}+11s^5t^5-t^{10}))$ (up to isomorphism)
These will appear in \S \ref{subsubsection:imm-NP}.

\subsubsection{Plumbed manifold structures}

For every type of simple singularities, the link admits the plumbed 3-manifold structure corresponding to the weighted dual graph of the minimal resolution of the singularity.
The underlying graphs are called {\it Dynkin diagrams}, which are listed below.

\begin{figure}[hb]
\label{fig:dynkin-diagram}

\begin{center}
$A_{n-1}: \ $ \dynkin[radius=100mm, edge length=6mm]{A}{} 
$\quad D_{n+2}: \ $ \dynkin[radius=100mm, edge length=6mm]{D}{}
\end{center}

\begin{center}
$E_6: \ $ \dynkin[radius=100mm, edge length=6mm]{E}{6} 
$\quad E_7: \ $ \dynkin[radius=100mm, edge length=6mm]{E}{7}
$\quad E_8: \ $ \dynkin[radius=100mm, edge length=6mm]{E}{8}
\end{center}

\caption{Dynkin diagrams of types $A$-$D$-$E$: weights of vertices and edges are all $-2$ and $1$, respectively.}
\end{figure}

\noindent{Let $G$ be one of Dynkin diagrams of types $A$-$D$-$E$, and let $M(G)$ (resp.~$X(G)$) denote the plumbed 3- (resp.~4-) manifold corresponding to $G$.} 
Notice that each component of the plumbing is the unit cotangent bundle $UT^*S^2$ (resp.~the disk cotangent bundle $DT^*S^2$).
For the type of simple singularities corresponding to $G$, its minimal resolution is diffeomorphic to $X(G)$. Hence, the link $K$ is diffeomorphic to $M(G)$.

As a specific property of simple singularities, the Milnor fiber $F$ is also diffeomorphic to $X(G)$.
However, the complex structures on the minimal resolution and the Milnor fiber are quite different in the following sense.
Each manifold admits a spine which is expressed as the transverse union of 2-spheres.
Spheres in the spine can be taken as complex submanifolds in the minimal resolution (the union of exceptional divisors), 
while totally real submanifolds in the Milnor fiber (the section by real $3$-space).
This difference will be crucial in \S \ref{subsection:reconstruction-identification}, \ref{subsection:Wuinv-cpx-tan}.

\subsection{Immersions of links of simple singularities}

We list all immersions which are studied in this paper as follows.
See also Figure \ref{fig:overview} in \S \ref{section:intro}.

\subsubsection{Inclusion maps}\label{subsubsection:imm-incl}

Let us consider an isolated surface singularity in $\C^3$, its Milnor fiber $F$, and its link $K$.
By definition, we have inclusion maps
\[\hat{f} \colon F \into B^6_\epsilon \quad \text{and} \quad f \coloneqq \hat{f}|_K \colon K \into S^5_\epsilon.\]
Here we identified $\partial F$ with $K$, since they are isotopic in $S^5_\epsilon$.
Removing one point of $S^5_\epsilon$ missed by $f$, we obtain embeddings which are denoted by
\[\hat{f} \colon F \into \R^6_+ \quad \text{and} \quad f \coloneqq \hat{f}|_K \colon K \into \R^5\] 
as well.

\begin{remark}\label{rmk:remove-one-point}
The classification of 3-manifolds into 5-space up to regular homotopy is completely the same as that into the 5-sphere. Hence, the removement of one point is not necessary, and just for the convenience of our argument. 
\end{remark}

\subsubsection{Immersions defined by holomorphic map-germs}\label{subsubsection:imm-NP}

The following construction is due to N\'emethi--Pint\'er \cite{NP15}.
Let $\Phi \colon (\C^2, 0) \to (\C^3, 0)$ be a holomorphic map-germ which is singular only at the origin.
Then, for a sufficiently small number $\epsilon > 0$, the preimage $\mathfrak{S} \coloneqq \Phi^{-1}(S^5_\epsilon)$ is canonically diffeomorphic to the 3-sphere $S^3$.
Hence, the restriction of $\Phi$ to $\mathfrak{S}$ forms an immersion
\[\Phi|_\mathfrak{S} \colon S^3 \imm S^5_\epsilon\]
(its regular homotopy class is independent of the choice of $\epsilon$ and the orientation-preserving diffeomorphism $\mathfrak{S} \cong S^3$).
If the map $\Phi$ is the parametrization of a given type of simple singularities and $K$ is its link, then the induced immersion is factorized into
\[\Phi|_\mathfrak{S} = f \circ p \colon S^3 \to K \into S^5_\epsilon,\]
where $f \colon K \into S^5_\epsilon$ is the inclusion map of the link and $p \colon S^3 \to K$ is the universal covering map.

\subsubsection{Immersions associated with Dynkin diagrams of types $A$-$D$-$E$}\label{subsubsection:imm-Kinjo}

The following construction is due to Kinjo \cite{Kin15}.
We consider an immersion $S^2 \imm \R^4$ with only one positively transverse double point. Since the total space of its disk normal bundle is the disk cotangent bundle $DT^*S^2 = X(A_1)$, we obtain immersions
\[\hat{g}_{A_1} \colon X(A_1) \imm \R^4 \quad \text{and} \quad g_{A_1} \coloneqq \hat{g}_{A_1}|_{M(A_1)} \colon M(A_1) \imm \R^4.\]
Furthermore, for each Dynkin diagram $G$ of types $A$-$D$-$E$, we construct immersions
\[\hat{g}_G \colon X(G) \imm \R^4 \quad \text{and} \quad g_G \coloneqq \hat{g}_G|_{M(G)} \colon M(G) \imm \R^4\]
by plumbing the copies of $\hat{g}_{A_1}$ according to $G$.
We also pull-back the immersions $g_G$ by the universal covering map $p \colon S^3 \to M(G)$. Then we obtain immersions $g_G \circ p \colon S^3 \imm \R^4$.

We will reconstruct (regularly homotope) immersions $g_G$ as nicer ones, so that they fit to almost contact and complex structures (see \S \ref{subsection:reconstruction-identification}).

\begin{remark}\label{rmk:orientation-of-source}
To be precise, the immersion Kinjo constructed is $\bar{g}_G \coloneqq g_G \circ r \colon M(G) \imm \R^4$, where $r \colon M(G) \to M(G)$ is an orientation-reversing diffeomorphism.
For example, the 3-manifold Kinjo immersed is orientation-preservingly diffeomorphic to $L(n, 1)$.
However, the link $K$ of type $A_{n-1}$ is endowed with the orientation as $L(n, n-1)$, which is induced from the standard complex structure on the Milnor fiber $F \subset \C^3$.
\end{remark}

\section{Immersions of oriented 3-manifolds}\label{section:imm-generality}

In this section, we review the works on the classification of immersions of 3-manifolds into 4- and 5-spaces up to regular homotopy, and their formulae obtained in accordance with the methodology of singular Seifert surfaces.
We omit definitions of some notions which will not be used in our results.
See the original papers and \cite{Pin18} for details.

Given manifolds $M$ and $N$, let $\Imm[M, N]$ denote the set of all regular homotopy classes of immersions of $M$ into $N$.
Let $V_{n,k}$ denote the space of all orthonormal $k$-frames in $\R^n$, which is called the Stiefel manifold.

\subsection{Immersions of the 3-sphere into 4-space}\label{subsection:S3R4}

We begin with the lowest-dimensional case.
Smale showed that immersions of the $k$-sphere into $n$-space ($k < n$) are classified by the $k$-th homotopy group of the Stiefel manifold $V_{n,k}$ up to regular homotopy \cite{Sma59}. To be precise, he constructed a group isomorphism
\[\Omega \colon \Imm[S^k, \R^n] \xto{\cong} \pi_k(V_{n,k}),\]
where the group structure on $\Imm[S^k, \R^n]$ is given by the connected sum.
Given an immersion, its value under this map is called the {\it Smale invariant} of the immersion.
In particular, we have that
\[\Imm[S^3, \R^4] \cong \pi_3(V_{4,3}) \cong \pi_3(\SO(4)) \cong \Z \oplus \Z.\]
We fix the last isomorphism as follows, referring to \cite{Ste51, ET11, Kin15, NP15}:
we identify $\R^4$ with the set $\HH$ of all quaternions by identifying standard basis $e_1, e_2, e_3, e_4$ with $1, i, j, k$, respectively. 
We also identify $S^3$ with the set of all unit quaternions as the restriction of the above.
For $x, y \in \HH$, let $x \cdot y$ denote their quaternionic product.
Now we consider the principal $\SO(3)$-bundle $\pi \colon \SO(4) \to S^3, \ \pi(R) \coloneqq R(e_1)$.
Then the map
\[\sigma \colon S^3 \to \SO(4), \ \sigma(x)(y) \coloneqq x \cdot y\]
forms a section of $\pi$. Hence, $\pi \colon \SO(4) \to S^3$ is trivial, i.e., $\SO(4) \cong S^3 \times \SO(3)$.
Furthermore, the map
\[\rho \colon S^3 \to \SO(4), \ \rho(x)(y) \coloneqq x \cdot y \cdot x^{-1}\]
forms a universal covering map over the fiber $\SO(3) \subset \SO(4)$.
Therefore, we have that
\[\pi_3(\SO(4)) \cong \pi_3(S^3 \times \SO(3)) \cong \pi_3(S^3) \oplus \pi_3(\SO(3)) = \Z [\sigma] \oplus \Z [\rho] \cong \Z \oplus \Z,\] 
where the last isomorphism is the standard identification.

For an immersion $g \colon S^3 \imm \R^4$, a {\it singular Seifert surface} of $g$ is a generic map $\hat{g} \colon X^4 \to \R^4$ from a compact oriented 4-manifold $X^4$ bounded by $S^3$ satisfying that $\hat{g}|_{S^3} = g$ and has no singularities near the boundary.
Ekholm--Takase presented the following formula to compute the Smale invariant (two integers), based on Hughes' work.

\begin{theorem}[\cite{Hug92, ET11}]\label{thm:ET11}
{\it 
Let $g \colon S^3 \imm \R^4$ be an immersion and $\hat{g} \colon X^4 \to \R^4$ a singular Seifert surface of $g$.
Then the Smale invariant of $g$ is given by
\[\Omega(g) = \left( D(g) - 1,  \frac{3 \sigma(X^4) + \# \Sigma^2(\hat{g}) - 2(D(g) - 1)}{4} \right),\]
where $D(g)$ is the normal mapping degree of $g$ and $\# \Sigma^2(\hat{g})$ is the algebraic number of $\Sigma^2$-singularities of $\hat{g}$ (see \cite[\S 2.3]{ET11}).
}
\end{theorem}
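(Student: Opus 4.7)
The plan is to exploit the splitting $\pi_3(\SO(4)) \cong \Z[\sigma] \oplus \Z[\rho]$ recalled just above the statement, and compute the two coordinates of $\Omega(g)$ separately, using the singular Seifert surface $\hat{g} \colon X^4 \to \R^4$ as the device that converts bordism data on $X^4$ into bundle data on the boundary $S^3$.

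First, for the $\sigma$-coordinate I would exploit the projection $\pi \colon \SO(4) \to S^3$, $R \mapsto R(e_1)$, which sends $[\sigma]$ to a generator of $\pi_3(S^3)$ and kills $[\rho]$. Under Smale--Hirsch, post-composing the derivative $dg$ (read through the $\sigma$-trivialization of $TS^3$ and the standard frame on $\R^4$) with $\pi$ recovers, up to sign and the identification $V_{4,3} \simeq \SO(4)$, the unit normal Gauss map of $g$, whose degree is by definition $D(g)$. Since the standard embedding has normal mapping degree $1$ and Smale invariant $0$, the $\sigma$-coordinate of $\Omega(g)$ is $D(g)-1$.

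Next, the $\rho$-coordinate must be extracted from bounding data on $X^4$. The strategy is to interpret $\Omega(g)$ as the obstruction to extending the trivialization of $g^{*} T\R^4$ over $X^4$ as a $\GL^{+}(4, \R)$-framing of $TX^4$. When $\hat{g}$ is a genuine immersion, Hughes' argument expresses this obstruction, up to the factor $4$ coming from the double cover $\rho \colon S^3 \to \SO(3)$, as a combination of the signature $\sigma(X^4)$ (entering through the Hirzebruch signature theorem and the Pontryagin class of the framing bundle) and the self-intersection and normal Euler data of $\hat{g}$; the interaction with the first coordinate produces the $-2(D(g)-1)$ correction in the numerator. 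For a generic $\hat{g}$ with isolated $\Sigma^2$ points, I would replace a small neighborhood of each such singularity by a model immersed piece, bound the change in framing by an explicit quaternionic formula, and show that the net correction to Hughes' bulk formula is exactly the algebraic count $\#\Sigma^2(\hat{g})$.

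The principal obstacle will be the local analysis of an isolated $\Sigma^2$ point: one needs a controlled normal form, an explicit local resolution by an immersion, and a careful orientation calculation to certify that each algebraically counted $\Sigma^2$ point contributes exactly $+1$ (before division by $4$) to the numerator $3\sigma(X^4)+\#\Sigma^2(\hat{g})-2(D(g)-1)$. Once this local contribution is pinned down, summing the boundary, bulk signature, and local singular contributions, and dividing by $4$ to pass from $\pi_3(S^3) \to \pi_3(\SO(3))$ via $\rho$, yields the claimed closed formula.
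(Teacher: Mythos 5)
The paper offers no proof of this statement: it is imported verbatim from Hughes \cite{Hug92} and Ekholm--Takase \cite{ET11}, so there is no internal argument to compare yours against. Judged on its own terms, your outline does follow the broad strategy of those sources --- split $\pi_3(\SO(4)) \cong \Z[\sigma] \oplus \Z[\rho]$, read the $[\sigma]$-coordinate off a degree, and extract the $[\rho]$-coordinate from bounding data on $X^4$ via characteristic classes --- but as a proof it has genuine gaps.

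First, the entire content of the second coordinate is deferred: you yourself identify the local analysis at an isolated $\Sigma^2$ point (normal form, model resolution, orientation count certifying a contribution of exactly $+1$ to the numerator) as the principal obstacle, and you do not carry it out; that computation is the heart of \cite{ET11} and cannot be assumed. Second, your explanation of the denominator is wrong: $\rho \colon S^3 \to \SO(3)$ is a covering map, so $\rho_* \colon \pi_3(S^3) \to \pi_3(\SO(3))$ is an isomorphism and contributes no factor of $4$. The $4$ and the cross-term $-2(D(g)-1)$ actually arise from the Pontryagin-class computation for oriented rank-$4$ bundles over $S^4$: the bundle clutched by $a[\sigma] + b[\rho]$ has Euler number $a$ and $p_1 = 2a + 4b$, so $b = (p_1 - 2a)/4$, and the signature theorem together with the singular corrections identifies $p_1$ with $3\sigma(X^4) + \#\Sigma^2(\hat{g})$. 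Third, the identification of $\pi \circ \phi$ with the normal Gauss map needs justification: with the convention $\pi(R) = R(e_1)$ the composite records the image of the first tangent frame vector, not the unit normal, and one must check that the two maps $S^3 \to S^3$ have the same degree (or adjust which column is projected out). None of these objections is fatal to the strategy, but all three must be supplied before the sketch becomes a proof.
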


Kinjo applied Theorem \ref{thm:ET11} to the following computation.

\begin{theorem}[\cite{Kin15}]\label{thm:Kin15}
{\em
Consider the Dynkin diagram $G$ of type $A$ or $D$, Kinjo's immersion $\bar{g}_G \colon M(G) \imm \R^4$, and the universal covering map $p \colon S^3 \to M(G)$. 
Then the Smale invariant of $\bar{g}_G \circ p \colon S^3 \imm \R^4$ is
\[
\Omega(\bar{g}_G \circ p) = 
\begin{cases}
(n^2 - 1, 0) & \text{ for } A_{n-1} \ (n \ge 2); \\
(4n^2 + 12n - 1, 0) & \text{ for } D_{n+2} \ (n \ge 2).
\end{cases} 
\]
}
\end{theorem}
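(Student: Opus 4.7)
The plan is to apply Ekholm--Takase's formula (Theorem \ref{thm:ET11}) to $\bar{g}_G \circ p \colon S^3 \imm \R^4$. I need (i) a singular Seifert surface $\hat{g} \colon W^4 \to \R^4$ with $\partial W^4 = S^3$ and $\hat{g}|_{S^3} = \bar{g}_G \circ p$, (ii) the normal mapping degree $D(\bar{g}_G \circ p)$, (iii) the signature $\sigma(W^4)$, and (iv) the algebraic number $\# \Sigma^2(\hat{g})$ of cuspidal singularities; these four pieces then assemble into $\Omega(\bar{g}_G \circ p)$ directly from the formula.

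For (i), the obvious candidate is the plumbed 4-manifold with the plumbing extension $\hat{g}_G \colon X(G) \to \R^4$, whose boundary is $M(G)$ rather than $S^3$. Since $M(G) \cong \Gamma \backslash S^3$ but $X(G)$ is simply connected, no honest $|\Gamma|$-fold cover of $X(G)$ exists. The workaround is to puncture $X(G)$ along a suitable codimension-two subset (for instance, a transverse disk at each double point, or the spine), pass to a cover of the complement whose boundary is $S^3$, and cap off, keeping careful track of how the signature and the singular set of $\hat{g}_G$ transform under this process. This construction is essentially local at each vertex of $G$, so it reduces to analysing the basic building block $\hat{g}_{A_1}$ and then assembling.

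For (ii), the Gauss map of $\bar{g}_G \circ p$ factors through that of $\bar{g}_G$ via $p$, so by multiplicativity of degree
\[D(\bar{g}_G \circ p) = |\Gamma| \cdot D(\bar{g}_G),\]
with $|\Gamma| = n$ for $A_{n-1}$ and $|\Gamma| = 4n$ for $D_{n+2}$. A short local computation at each vertex, tracking the orientation-reversal $r$ and the signs of the double points of $g_G$, should give $D(\bar{g}_G) = n$ and $n+3$ respectively, reproducing the stated first coordinates $n^2 - 1$ and $4n^2 + 12n - 1$. For (iii) and (iv), both $\sigma(W^4)$ and $\# \Sigma^2(\hat{g})$ should also decompose into local contributions at the vertices of $G$: the signature is controlled by the negative-definite plumbing intersection form of $X(G)$ (suitably modified by the cover), and the cusp count by a local model near each $\hat{g}_{A_1}$.

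The hard part will be step (i): producing a singular Seifert surface that is simultaneously explicit enough for (iii) and (iv) to be computable, and compatible with the covering $p$ so that $\bar{g}_G \circ p$ is faithfully extended. Once this is settled, the remaining work is combinatorial: the per-vertex contributions combine along the Dynkin graph $G$, and substituting into
\[\Omega(\bar{g}_G \circ p) = \left( D(\bar{g}_G \circ p) - 1, \ \frac{3 \sigma(W^4) + \# \Sigma^2(\hat{g}) - 2(D(\bar{g}_G \circ p) - 1)}{4} \right)\]
should yield $0$ in the second coordinate, confirming the stated values for types $A$ and $D$. The restriction to these two families in the statement is traceable back to exactly the combinatorial identity that fails, or becomes hard to verify, for the exceptional diagrams $E_6, E_7, E_8$.
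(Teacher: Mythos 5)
This statement is quoted from Kinjo's paper and is not proved at the point where it appears; the present paper later \emph{recovers} it (Theorem \ref{thm:determination-Kinjo-Smale}) by a route that deliberately avoids singular Seifert surfaces: the second coordinate is extracted from the identity $\Omega(j\circ\bar g_G\circ p)=-\Omega(\Phi|_{\mathfrak S})$, which follows from the regular homotopy $f\simeq j\circ g_G$ and N\'emethi--Pint\'er's computation, combined with $j_*(a,b)=a+2b$; only the first coordinate is obtained from the normal mapping degree. Your proposal instead attempts Kinjo's original strategy of applying Theorem \ref{thm:ET11} directly. That is a legitimate alternative route in principle, and your treatment of the first coordinate is sound: $D(\bar g_G\circ p)=\#\Gamma\cdot D(\bar g_G)$ with $D(\bar g_G)=\chi(X(G))=1+\#V(G)$ reproduces $n^2-1$ and $4n^2+12n-1$ exactly as in the paper's own degree computation.

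The genuine gap is the second coordinate, which is the entire content of the theorem and which your proposal asserts rather than derives. You never produce a singular Seifert surface on which $\sigma(W^4)$ and $\#\Sigma^2(\hat g)$ can actually be evaluated; you only say that substitution ``should yield $0$.'' Note that the vanishing of the second coordinate is equivalent to the identity $\#\Sigma^2(\hat g)=2(D-1)-3\sigma(W^4)$, which is precisely the nontrivial count Kinjo performs. Moreover, the specific construction you sketch collapses: the complement of the spine in $X(G)$ deformation retracts onto $M(G)$, so its $\Gamma$-fold cover is just $S^3\times(0,1]$, and ``capping off'' means extending $\bar g_G\circ p$ to a generic map $B^4\to\R^4$ and counting its cusps --- i.e.\ you are back to the original problem with no leverage from the plumbing structure (and since $\sigma(B^4)=0$, the whole burden falls on the cusp count $\#\Sigma^2=2(D-1)$, e.g.\ $2(n^2-1)$ cusps for $A_{n-1}$). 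A cover of $X(G)$ punctured along transverse disks at the double points is likewise not justified: you would need a homomorphism $\pi_1(X(G)\setminus P)\to\Gamma$ restricting to the identity on $\pi_1(M(G))=\Gamma$, which is not automatic and is not exhibited. Finally, your closing remark inverts the logic: the restriction to types $A$ and $D$ in the statement is not evidence that the identity ``fails'' for $E_6,E_7,E_8$ --- the second coordinate is $0$ there too, as Theorem \ref{thm:determination-Kinjo-Smale} shows --- but rather that the Seifert-surface computation you are deferring could not be carried out for the exceptional diagrams by this method. As written, the proposal is a plan whose decisive step is missing.
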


We note on changing of the Smale invariant of $S^3 \imm \R^4$ by reversing the orientation.

\begin{proposition}[{cf.~\cite[Lemma 2.5]{Hug92}}]
{\em
Let $g \colon S^3 \imm \R^4$ be an immersion and $r \colon S^3 \to S^3$ an orientation-reversing diffeomorphism.
Then it holds that
\[\Omega(g \circ r) = -\Omega(g) + (-2, 1).\]
}
\end{proposition}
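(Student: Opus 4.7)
The plan is to apply Ekholm--Takase's formula (Theorem \ref{thm:ET11}) to both $g$ and $g \circ r$ using a compatible pair of singular Seifert surfaces, and then to compare.

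First I would fix a singular Seifert surface $\hat g \colon X^4 \to \R^4$ of $g$, so that $X^4$ is a compact oriented $4$-manifold with $\partial X^4 = S^3$ and $\hat g|_{S^3} = g$. Reversing the orientation of $X^4$ produces $-X^4$, whose boundary $-S^3$ is identified with $S^3$ via $r$ (viewed as an orientation-preserving diffeomorphism $S^3 \to -S^3$); under this identification the same underlying map $\hat g \colon -X^4 \to \R^4$ serves as a singular Seifert surface of $g \circ r$.

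Next I would track how each of the three ingredients of Theorem \ref{thm:ET11} transforms under this orientation reversal. The signature flips: $\sigma(-X^4) = -\sigma(X^4)$. The Gauss map of $g \circ r$ equals $-\nu_g \circ r$, since reversing the source orientation flips the positive unit normal; combined with $\deg r = -1$ and the fact that the antipodal map on $S^3$ has degree $+1$, this gives $D(g \circ r) = -D(g)$. Finally, the algebraic count $\# \Sigma^2(\hat g)$ reverses sign when the orientation of $X^4$ is reversed, as the signed count of $\Sigma^2$-singularities of a map between equidimensional oriented manifolds is orientation-sensitive in its source, following the convention of \cite[\S 2.3]{ET11}.

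With these three transformation rules in hand, substituting into Theorem \ref{thm:ET11} and subtracting the two expressions yields the identity $\Omega(g \circ r) = -\Omega(g) + (-2, 1)$ after a brief arithmetic; as a consistency check, iterating recovers $\Omega(g \circ r \circ r) = \Omega(g)$, matching the fact that $r \circ r$ is isotopic to $\mathrm{id}_{S^3}$. The main obstacle is the third transformation, the sign reversal of $\# \Sigma^2$ under source-orientation reversal: one must carefully unpack the specific sign convention of \cite[\S 2.3]{ET11}, whereas the transformations of $\sigma$ and of $D$ are essentially definitional and the concluding computation is routine. If the sign analysis proved delicate, a fallback would be to verify the identity directly on the standard inclusion $S^3 \into \R^4$ (where $X^4 = D^4$, $\sigma = 0$, $\#\Sigma^2 = 0$, and one readily computes $\Omega(\mathrm{incl}) = (0,0)$ while $\Omega(\mathrm{incl} \circ r) = (-2, 1)$) and then transport the statement to a general $g$ via Smale's classification.
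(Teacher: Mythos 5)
Your argument is correct, but it is not the route the paper takes: the paper offers no proof at all, simply citing Hughes' Lemma 2.5, which establishes the behaviour of the Smale invariant under reparametrization homotopy-theoretically, at the level of $\pi_3(V_{4,3})\cong\pi_3(\SO(4))$. Your derivation instead deduces the identity from the Ekholm--Takase formula, and the bookkeeping is right: with $\sigma\mapsto-\sigma$, $D\mapsto -D$, $\#\Sigma^2\mapsto-\#\Sigma^2$ one gets $D'-1=-(D-1)-2$ and $4b'=-3\sigma-\#\Sigma^2+2D+2=-4b+4$, i.e.\ exactly $-\Omega(g)+(-2,1)$. The one point you flag as delicate, the sign reversal of $\#\Sigma^2(\hat g)$, can be settled cleanly: the sign of an isolated $\Sigma^{2,0}$ point is the local intersection number of the $1$-jet extension with the stratum $\Sigma^2$, whose normal bundle at a jet with kernel $K$ and cokernel $Q$ is $\Hom(K,Q)$ with $\dim K=\dim Q=2$, so $\det\Hom(K,Q)\cong(\det K^*)^{\otimes 2}\otimes(\det Q)^{\otimes 2}$ is canonically oriented; hence the coorientation of $\Sigma^2$ is canonical and the intersection sign depends precisely linearly on the orientation of the source $X^4$, which gives the claimed flip. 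What each approach buys: Hughes' argument is shorter and independent of singularity theory, while yours is self-contained given Theorem~\ref{thm:ET11} and makes the constant $(-2,1)$ visible as coming from the affine shifts $D-1$ and $-2(D-1)$ in that formula. Your proposed fallback is the weakest part: checking the identity on the standard inclusion does not ``transport via Smale's classification,'' because $g\mapsto g\circ r$ is not a group homomorphism for the connected-sum structure (the formula itself shows it is an anti-homomorphism shifted by a constant), so you would still need an a priori affine-linearity statement --- which is essentially Hughes' lemma. Stick with the main argument.
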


Then we can deduce the following from Theorem \ref{thm:Kin15}.

\begin{corollary}\label{thm:Kin15-rev}
{\em
Under the same setup of Theorem \ref{thm:Kin15}, the Smale invariant of $g_G \circ p$ is
\[
\Omega(g_G \circ p) = 
\begin{cases}
(-n^2 - 1, 1) & \text{ for } A_{n-1} \ (n \ge 2); \\
(-4n^2 - 12n - 1, 1) & \text{ for } D_{n+2} \ (n \ge 2).
\end{cases}
\]
}
\end{corollary}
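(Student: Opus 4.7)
The plan is to derive Corollary \ref{thm:Kin15-rev} as a direct consequence of Theorem \ref{thm:Kin15} combined with the orientation-reversal formula stated in the preceding proposition. The only geometric step is to transfer the orientation-reversing diffeomorphism $r$ on $M(G)$ up to an orientation-reversing diffeomorphism of $S^3$ through the universal cover $p$.

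First, I would fix the orientation on $S^3$ making $p \colon S^3 \to M(G)$ orientation-preserving. Because $S^3$ is simply connected, the diffeomorphism $r \colon M(G) \to M(G)$ admits a lift $\tilde r \colon S^3 \to S^3$ satisfying $p \circ \tilde r = r \circ p$; since $p$ is a local orientation-preserving diffeomorphism while $r$ reverses orientation, the lift $\tilde r$ is itself an orientation-reversing diffeomorphism. This provides the key identification
\[
\bar g_G \circ p \;=\; g_G \circ r \circ p \;=\; (g_G \circ p) \circ \tilde r.
\]

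Next, I would apply the preceding proposition to the immersion $g_G \circ p \colon S^3 \imm \R^4$ and the orientation-reversing diffeomorphism $\tilde r$, yielding
\[
\Omega(\bar g_G \circ p) \;=\; -\Omega(g_G \circ p) + (-2, 1),
\]
which, since the map $x \mapsto -x + (-2,1)$ on $\Z \oplus \Z$ is an involution, rearranges to $\Omega(g_G \circ p) = -\Omega(\bar g_G \circ p) + (-2, 1)$. Plugging in the values of $\Omega(\bar g_G \circ p)$ given by Theorem \ref{thm:Kin15} for the types $A_{n-1}$ and $D_{n+2}$ and performing a short arithmetic produces precisely the two formulas stated in the corollary.

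The argument is purely formal, so I do not anticipate any serious obstacle; the only item worth stating carefully is the existence and orientation-reversing nature of the lift $\tilde r$, which follows immediately from covering space theory.
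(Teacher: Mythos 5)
Your proposal is correct and is essentially the paper's own (implicit) argument: the corollary is deduced from Theorem \ref{thm:Kin15} by applying the orientation-reversal formula $\Omega(g \circ r) = -\Omega(g) + (-2,1)$, and your lift of $r$ to an orientation-reversing diffeomorphism $\tilde r$ of $S^3$ is the right way to justify transferring that formula through the covering $p$. The arithmetic $\Omega(g_G \circ p) = -\Omega(\bar g_G \circ p) + (-2,1)$ reproduces exactly the stated values.
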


\subsection{Immersions of the 3-sphere into 5-space}\label{subsection:S3R5}

According to Smale, we also have that
\[\Imm[S^3, \R^5] \cong \pi_3(V_{5,3}) \cong \pi_3(\SO(5)) \cong \Z.\]
We fix the last isomorphism by choosing the generator of $\pi_3(\SO(5))$ to be
\[\iota \circ \sigma \colon S^3 \to \SO(4) \into \SO(5),\]
where $\sigma \colon S^3 \to \SO(4)$ is the map defined in \S \ref{subsection:S3R4} and $\iota \colon \SO(4) \into \SO(5)$ is the standard inclusion (see also Remark \ref{rmk:sign1} below).

For an immersion $f \colon S^3 \imm \R^5$, a {\it singular Seifert surface} of $f$ is a generic map $\hat{f} \colon X^4 \to \R^6_+$ from a compact oriented 4-manifold $X^4$ bounded by $S^3$ satisfying that $\hat{f}^{-1}(\R^5) = S^3$, $\hat{f}|_{S^3} = f$, and $\hat{f}$ has no singularities near the boundary.
Ekholm--Sz\u cs presented the following formula to compute the Smale invariant (one integer), based on Hughes--Melvin's work.

\begin{theorem}[\cite{HM85, ES03}, see also \cite{SST02}]\label{thm:ES03}
{\it 
Let $f \colon S^3 \imm \R^5$ be an immersion and $\hat{f} \colon X^4 \to \R^6_+$ a singular Seifert surface of $f$.
Then the Smale invariant of $f$ is given by
\[\Omega(f) = \frac{3}{2}\sigma(X^4) + \frac{1}{2}(3t(\hat{f}) - 3l(\hat{f}) + L(f)),\]
where $t(\hat{f})$, $l(\hat{f})$, and $L(f)$ are integers obtained from the singularity of $\hat{f}$ and the double locus of $f$ (see \cite[\S 2.2 and 2.3]{ES03}, and also \cite{PS24, PT23} for $L(f)$).
}
\end{theorem}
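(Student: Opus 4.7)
The plan is to interpret $\Omega(f)$ via the singular Seifert surface $\hat{f}$ as a generically non-immersive null-cobordism of $f$, and to express the invariant as the sum of a global signature contribution coming from $X^4$ plus local contributions coming from the singularities of $\hat{f}$ and from the double locus of $f$. Throughout, the underlying identification is the isomorphism $\Omega \colon \Imm[S^3, \R^5] \xto{\cong} \pi_3(\SO(5)) \cong \Z$ recalled in \S \ref{subsection:S3R5}, which lets one interpret $\Omega(f)$ as a framing-obstruction class that can be analysed on any bounding 4-manifold.

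First I would treat the baseline case in which $\hat{f}$ is itself an immersion (so the singularity counts $t(\hat{f})$ and $l(\hat{f})$ vanish) and $f$ is an embedding (so $L(f) = 0$). The formula then collapses to $\Omega(f) = \tfrac{3}{2}\sigma(X^4)$, which is the Hughes--Melvin theorem: the stable normal framing of the immersed $X^4 \imm \R^6_+$ extends that of $f$, and the resulting difference class in $\pi_3(\SO(5))$ is identified with a characteristic number of $X^4$ equal to $\tfrac{3}{2}\sigma(X^4)$ via the Hirzebruch signature formula. This is the anchor of the whole proof.

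Next I would allow $f$ to have nonempty double locus while insisting that $\hat{f}$ remains an immersion. Each component of the double locus can be resolved by a standard local modification producing an embedded $f'$ regularly homotopic to $f$ modulo a framing defect that depends only on the double locus; by definition this defect is $\tfrac{1}{2}L(f)$, so the formula in this case follows by combining with the baseline step. The main step is then to allow $\hat{f}$ itself to carry the codimension-one singular strata ($\Sigma^{1,0}$ folds and their degenerations into cusp-type and higher singularities). The strategy, in the spirit of Sz\u cs's cobordism approach to singular maps, is to homotope $\hat{f}$ rel boundary through a sequence of elementary local modifications that trade singular points for extra handles, self-intersections and sign changes. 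Each such modification has a universal local model with prescribed contributions to $\Omega$, to $\sigma(X^4)$, and to the counts entering $L(f)$, $t(\hat{f})$, $l(\hat{f})$. Summing these balances as $\hat{f}$ is resolved to a true immersion produces the correction $\tfrac{3}{2}t(\hat{f}) - \tfrac{3}{2}l(\hat{f})$, after which the earlier cases apply.

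The hard part will be the calibration of signs and rational coefficients. Matching the universal constants produced by each local model to the precise factors $\tfrac{3}{2}$ and $\tfrac{1}{2}$ requires explicit Smale-invariant computations on a small collection of reference singular Seifert surfaces whose total $\Omega$ is independently known, such as the standard embedding $S^3 \into \R^5$ bounded by a 4-ball, together with one model Seifert surface per singularity type carrying exactly one singular point (and one model per local type of double arc). Once these universal constants are pinned down, the additivity of each of $\sigma(X^4)$, $L(f)$, $t(\hat{f})$, $l(\hat{f})$ under the relevant connected-sum/bordism operations upgrades the local identities to the global formula as stated.
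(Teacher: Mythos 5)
This statement is quoted in the paper as a known result of Hughes--Melvin and Ekholm--Sz\H ucs; the paper supplies no proof of its own, so your proposal can only be measured against the actual argument of \cite{HM85, ES03}. As it stands, what you have written is a strategy outline rather than a proof, and two of its steps rest on a mistaken picture of the geometry. First, for a generic map $\hat{f} \colon X^4 \to \R^6_+$ the singular set $\Sigma^1(\hat f)$ has codimension $1\cdot(2+1)=3$ in $X^4$: it is a closed $1$-manifold of Whitney-umbrella (cross-cap) points, not a codimension-one stratum of folds degenerating to cusps. Consequently the proposed mechanism of ``elementary local modifications that trade singular points for extra handles'' is modelled on the wrong singularity theory, and it cannot produce the actual invariants $t(\hat f)$ and $l(\hat f)$, which in \cite{ES03} are an algebraic triple-point count and a linking-type invariant of the singular circles and double-point strata, not tallies of fold/cusp strata. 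Second, your treatment of the double locus of $f$ is circular: you posit that each double circle can be resolved to yield an embedding ``regularly homotopic to $f$ modulo a framing defect'' and then declare that defect to be $\tfrac12 L(f)$ ``by definition.'' There is no canonical local resolution of a double circle of an immersion $S^3 \looparrowright \R^5$ that controls the change of regular homotopy class, and defining the correction term to be whatever makes the formula hold proves nothing.

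The one sound anchor is the baseline case: for an embedding with an embedded Seifert surface the identity $\Omega(f)=\tfrac32\sigma(X^4)$ is indeed the Hughes--Melvin theorem, obtained by comparing framings and using $p_1=3\sigma$. But everything beyond that --- the precise definitions of $t$, $l$, $L$, the local models, and the calibration of the coefficients $\tfrac32$ and $\tfrac12$ --- is exactly the content of \cite{ES03}, and your proposal explicitly defers it. So the proposal has a genuine gap: it does not identify the correct local structure whose contributions are being summed, and without that the claimed bookkeeping cannot be carried out.
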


N\'emethi--Pint\'er studied the Smale invariant from in another way, and showed the following. Recall Remark \ref{rmk:remove-one-point}.

\begin{theorem}[\cite{NP15}]\label{thm:cpx-Smale}
{\it 
Let $\Phi \colon (\C^2, 0) \to (\C^3, 0)$ be a holomorphic map-germ which is singular only at the origin.
Consider the immersion $\Phi|_\mathfrak{S} \colon S^3 \imm S^5$ introduced in \S \ref{subsubsection:imm-NP}.
Then the Smale invariant of $\Phi|_\mathfrak{S}$ is given by
\[\Omega(\Phi|_\mathfrak{S}) = -C(\Phi),\]
where $C(\Phi)$ is the number of complex Whitney umbrella singularities appearing in a holomorphic stable perturbation of $\Phi$ (see \cite[\S 2.2]{NP15}).
}
\end{theorem}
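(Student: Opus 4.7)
The plan is to build an explicit singular Seifert surface out of $\Phi$ itself and apply Ekholm--Sz\u cs' formula (Theorem \ref{thm:ES03}). Fix a holomorphic stable perturbation $\Phi_t \colon B^4_\delta \to \C^3$ of $\Phi$. For suitable $\delta \gg \epsilon$ and sufficiently small parameter $t$, the preimage $X^4 \coloneqq \Phi_t^{-1}(\bar B^6_\epsilon)$ is diffeomorphic to $B^4$, the map $\Phi_t|_{X^4}$ is a generic holomorphic map whose only singularities are $C(\Phi)$ complex Whitney umbrella points in the interior, and the boundary restriction $\Phi_t|_{\partial X^4}$ is an immersion of $S^3$ regularly homotopic to $\Phi|_\mathfrak{S}$. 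Identifying $\bar B^6_\epsilon \cong \R^6_+$ so that $S^5_\epsilon \cong \R^5$, this exhibits $\hat f \coloneqq \Phi_t|_{X^4}$ as a singular Seifert surface of $\Phi|_\mathfrak{S}$ in the sense required by Theorem \ref{thm:ES03}.

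Applying the theorem gives
\[\Omega(\Phi|_\mathfrak{S}) = \tfrac{3}{2}\sigma(X^4) + \tfrac{1}{2}\bigl(3t(\hat f) - 3l(\hat f) + L(\Phi|_\mathfrak{S})\bigr),\]
and the signature term vanishes because $X^4 \cong B^4$. The task therefore reduces to computing the combination $3t - 3l + L$ for this holomorphic singular Seifert surface. A key simplification comes from the fact that everything in sight is complex analytic: the $\Sigma^{1,1}$-locus of $\hat f$ consists exactly of the $C(\Phi)$ umbrella points, and the double/triple loci of $\hat f$ and of its boundary immersion all carry canonical complex orientations. In particular, the contributions to $t$, $l$, $L$ localise entirely at the complex Whitney umbrellas: away from them $\Phi_t$ is a holomorphic immersion, and by a standard compactness/cobordism argument the ``global'' portions of the double- and triple-point counts pair off and contribute nothing to $3t - 3l + L$.

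The main obstacle is the local computation at a single complex Whitney umbrella. Using the normal form $(u,v) \mapsto (u, v^2, uv)$ viewed as a real map $\R^4 \to \R^6$, and perturbing slightly to push the model singularity off the boundary sphere, one must enumerate the $\Sigma^{1,1}$-contribution to $t(\hat f)$, the triple points of the double-point surface contributing to $l(\hat f)$, and the triple points of the induced boundary immersion contributing to $L(\Phi|_\mathfrak{S})$. The target of this model computation is to show that each complex Whitney umbrella contributes exactly $-2$ to $3t - 3l + L$. Granting this local value, summing over the $C(\Phi)$ umbrella points and combining with $\sigma(X^4)=0$ yields $\Omega(\Phi|_\mathfrak{S}) = \tfrac{1}{2}\cdot(-2)\cdot C(\Phi) = -C(\Phi)$, as claimed.
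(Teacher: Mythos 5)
The paper itself offers no proof of this statement --- it is quoted verbatim from N\'emethi--Pint\'er --- so your argument has to stand entirely on its own, and as written it does not. The first problem is that the Ekholm--Sz\u cs invariants $t(\hat f)$ and $l(\hat f)$ in Theorem \ref{thm:ES03} are defined for a map that is generic in the \emph{real} ($C^\infty$) sense: a generic map $X^4 \to \R^6_+$ has a one-dimensional singular locus, whereas the complex Whitney umbrella $(u,v)\mapsto(u,v^2,uv)$ has an isolated real singular point, so the holomorphic stable perturbation $\Phi_t|_{X^4}$ is \emph{not} an admissible singular Seifert surface as it stands (your reference to its ``$\Sigma^{1,1}$-locus'' being the umbrella points is not meaningful for this map). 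One must further perturb it as a real map and track how each complex umbrella resolves into real-generic singular data before the formula applies; this is a nontrivial step you skip entirely.

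The second and more serious problem is the localization claim. Triple points are self-intersections of the \emph{immersive} locus, not singularities of the map, so they do not sit at the umbrella points: a holomorphic stable perturbation typically has $T(\Phi)>0$ complex triple points (Mond's triple-point number), and complex orientations force each to contribute $+1$ to $t(\hat f)$ --- they cannot ``pair off'' by any compactness argument. Likewise $L(\Phi|_\mathfrak{S})$ is a global linking invariant of the double-point link of the boundary immersion inside $S^3$, not a quantity concentrated near the umbrellas. The identity $3t-3l+L=-2C$ that your scheme requires is therefore equivalent to a genuinely global relation among $C(\Phi)$, $T(\Phi)$ and the linking invariant (in the known accounting, $t=T$, the umbrellas contribute to $l$, and $L(\Phi|_\mathfrak{S})=C(\Phi)-3T(\Phi)$); establishing that relation is essentially the content of \cite{PS24, PT23}, and those papers \emph{derive} it by combining Theorem \ref{thm:ES03} with the very statement you are trying to prove, so invoking it here would be circular. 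Finally, the one step you do flag as the ``main obstacle'' --- the local value $-2$ per umbrella --- is left as an unproven target. In short, the route through Theorem \ref{thm:ES03} is not a shortcut: without an independent computation of $L$ and of the real resolution of a complex umbrella, the proposal does not establish $\Omega(\Phi|_\mathfrak{S})=-C(\Phi)$.
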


% The quantity $C(\Phi)$ is computable in an algebraic way.

\begin{remark}\label{rmk:sign1}
They also pointed out the ambiguity of the choice of the generator of $\pi_3(\SO(5))$ (the sign of the Smale invariant), and fixed it.
They showed that the sign in Theorems \ref{thm:ES03} and \ref{thm:cpx-Smale} agree each other (see \cite[Theorem 9.1.6]{NP15}, and also \cite[Appendix A.3]{PS24}).
%Therefore, we can compare computations by three formulae without any ambiguity of signs.
\end{remark}

They applied Theorem \ref{thm:NP15} to the following computation.

\begin{theorem}[\cite{NP15}]\label{thm:NP15}
{\em
Consider any type of simple singularities, the parametrization $\Phi \colon (\C^2, 0) \to (\C^3, 0)$ of its zero set-germ, and the immersion $\Phi|_\mathfrak{S} = f \circ p \colon S^3 \imm S^5$.
Then the Smale invariant of $\Phi|_\mathfrak{S}$ is
\[\Omega(\Phi|_\mathfrak{S}) = 
\begin{cases}
-(n^2 - 1) & \text{ for } A_{n-1} \ (n \ge 2); \\
-(4n^2 + 12n - 1) & \text{ for } D_{n+2} \ (n \ge 2); \\
-167 & \text{ for } E_6; \\
-383 & \text{ for } E_7; \\
-1079 & \text{ for } E_8.
\end{cases}
\]
}
\end{theorem}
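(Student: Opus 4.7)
The plan is to invoke Theorem \ref{thm:cpx-Smale}, which identifies $\Omega(\Phi|_\mathfrak{S})$ with $-C(\Phi)$. This reduces the theorem entirely to the enumerative problem of computing, for each type of simple singularity, the number of complex Whitney umbrella singularities $C(\Phi)$ appearing in a generic holomorphic stable perturbation of the parametrization $\Phi$.

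To count cross-caps algebraically, I would exploit the fact that a $|\Gamma|$-to-$1$ parametrization of $\C^2/\Gamma$ is an immersion on $\C^2 \setminus \{0\}$, so the entire ramification locus is concentrated at the origin. Its multiplicity should be read off as the colength
\[
C(\Phi) = \dim_\C \mathcal{O}_{\C^2, 0}/J_\Phi,
\]
where $J_\Phi$ is the ideal generated by the three $2 \times 2$ minors of the Jacobian matrix of $\Phi$; a stable perturbation then distributes this colength over isolated Whitney umbrella points, which are the only stable singularities of corank $1$ maps from a surface into $3$-space. For $A_{n-1}$, using the parametrization $(s,t) \mapsto (\tfrac{1}{2}(s^n - t^n), \tfrac{\sqrt{-1}}{2}(s^n + t^n), st)$ given in \S\ref{subsubsection:quotient}, a direct computation of the three Jacobian minors yields, up to units, the monomial ideal $(s^{n-1}t^{n-1}, s^n, t^n)$; counting the monomials outside this ideal gives exactly $n^2 - 1$ (the $n = 2$ case recovering the classical cross-cap number $3$ for the $A_1$ parametrization, a useful sanity check).

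For the $D_{n+2}$ case, the parametrization $(s,t) \mapsto (\tfrac{st}{2}(s^{2n} - t^{2n}), \tfrac{\sqrt{-1}}{2}(s^{2n} + t^{2n}), s^2 t^2)$ is monomially more intricate but admits an analogous direct calculation that should extract $4n^2 + 12n - 1$. For $E_6, E_7, E_8$, I would obtain parametrizations from classical invariant theory of the binary polyhedral subgroups $2T, 2O, 2I \subset \SU(2)$, taking three fundamental invariants generating $\C[s,t]^\Gamma$ modulo the defining $A$-$D$-$E$ relation, and then compute the corresponding colengths via a Gr\"obner basis argument to extract $167, 383, 1079$.

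The main obstacle is the case-by-case nature of the $E$-type computations: the fundamental invariants of $2O$ and $2I$ have degrees up to $12$ and $30$ respectively, so the Jacobian ideals become sufficiently intricate that a hand calculation may be impractical and symbolic computation would be warranted. A unified closed formula for $C(\Phi)$ in terms of intrinsic invariants of the singularity (such as $|\Gamma|$, the Milnor number, or the Coxeter number of the associated root system) would consolidate the five cases into one, but identifying such a formula appears to lie outside the scope of this direct reduction.
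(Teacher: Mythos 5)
The paper offers no proof of this statement: it is quoted verbatim from \cite{NP15}, so the only thing to compare your attempt against is the citation. Your route --- reduce via Theorem \ref{thm:cpx-Smale} to counting cross-caps, then identify $C(\Phi)$ with the colength of the ramification ideal $J_\Phi$ generated by the $2\times 2$ Jacobian minors --- is precisely the route of the cited source. The identity $C(\Phi)=\dim_\C\mathcal{O}_{\C^2,0}/J_\Phi$ is Mond's theorem for finitely determined germs, and it applies here because $\Phi$ is an immersion off the origin (it is the $\Gamma$-quotient followed by an embedding of the smooth locus of $\C^2/\Gamma$), so $J_\Phi$ has finite colength. Your $A_{n-1}$ computation is correct: the minors generate $(s^{n-1}t^{n-1},\,s^n+t^n,\,s^n-t^n)=(s^{n-1}t^{n-1},s^n,t^n)$, of colength $n^2-1$.

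The gap is that you never actually produce the numbers for $D_{n+2}$, $E_6$, $E_7$, $E_8$, and you present the $E$ cases as requiring symbolic computation. They do not: the unified formula you ask for in your last paragraph exists and settles all five cases at once. Every parametrization here is weighted homogeneous; for the $E$ cases the three Klein invariants of $2T$, $2O$, $2I$ have degrees $(6,8,12)$, $(8,12,18)$, $(12,20,30)$, matched to the coordinates $(z,y,x)$ of the equations $x^2+y^3+z^4$, $x^2+y^3+yz^3$, $x^2+y^3+z^5$. Writing $a,b$ for the weights of $s,t$, $d_1,d_2,d_3$ for the degrees of the components of $\Phi$, $D=d_1+d_2+d_3$, $e_k=d_i+d_j-a-b$ for the degrees of the minors, and $u=D-2a-b$, $v=D-a-2b$, the Hilbert--Burch resolution
\[
0\to\mathcal{O}(-u)\oplus\mathcal{O}(-v)\xrightarrow{\ d\Phi\ }\bigoplus_{k=1}^3\mathcal{O}(-e_k)\to\mathcal{O}\to\mathcal{O}/J_\Phi\to 0
\]
(the two columns of the Jacobian are the syzygies among the minors, by Laplace expansion of a determinant with a repeated column) yields, on evaluating the Hilbert series at $1$,
\[
C(\Phi)=\frac{u^2+v^2-e_1^2-e_2^2-e_3^2}{2ab}.
\]
With $(a,b)=(1,1)$ and $(d_1,d_2,d_3)=(n,n,2)$, $(2n+2,2n,4)$, $(12,8,6)$, $(18,12,8)$, $(30,20,12)$, this returns $n^2-1$, $4n^2+12n-1$, $167$, $383$, $1079$, as required. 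A further consistency check: all five values equal $\#\Gamma\cdot(1+\#V(G))-1$, which is exactly how the paper recovers the first component of the Smale invariant in Theorem \ref{thm:determination-Kinjo-Smale} via the normal mapping degree --- but that argument takes Theorem \ref{thm:NP15} as input, so it cannot be used to prove it without circularity.
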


We also note on the relationship between the Smale invariants of $\Imm[S^3, \R^4]$ and $\Imm[S^3, \R^5]$.
By the standard inclusion $j \colon \R^4 \into \R^5$, we consider the opretation 
\[j_* \colon \Imm[S^3, \R^4] \to \Imm[S^3, \R^5], \quad g \mapsto j \circ g,\]
which forms a group homomorphism. Then the following is known.

\begin{proposition}[{\cite[\S 22.7, 23.6]{Ste51}, see also \cite[Lemma 2.4]{Hug92}}]\label{thm:relationship-push}
{\em 
Regard the map $j_*$ as $\Z \oplus \Z \to \Z$ via our choices of generators. 
Then it holds that
\[j_*(a, b) = a + 2b.\]
}
\end{proposition}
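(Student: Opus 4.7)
The plan is to translate the problem into computing the map $\iota_* \colon \pi_3(\SO(4)) \to \pi_3(\SO(5))$ induced by the standard inclusion $\iota \colon \SO(4) \into \SO(5)$. The Smale invariant is natural with respect to $j_*$, so $\Omega(j \circ g) = \iota_*(\Omega(g))$ for every $g \colon S^3 \imm \R^4$, and the problem reduces to computing $\iota_*$ on the basis $\{[\sigma], [\rho]\}$. By the convention fixed in \S \ref{subsection:S3R5}, $\iota_*([\sigma]) = 1$ essentially by definition, so the main task is to prove $\iota_*([\rho]) = 2$.

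First I would rewrite $[\rho]$ in an auxiliary basis. Define $\sigma' \colon S^3 \to \SO(4)$ by $\sigma'(x)(y) \coloneqq y \cdot x$; via the double cover $S^3 \times S^3 \to \SO(4)$, $(p,q) \mapsto (y \mapsto p y q^{-1})$, the pair $\{[\sigma], [\sigma']\}$ is another $\Z$-basis of $\pi_3(\SO(4))$. The identity $\rho(x)(y) = x y x^{-1} = \sigma(x) \cdot \sigma'(x^{-1})(y)$ together with the H-space structure on $\SO(4)$ yields $[\rho] = [\sigma] + [\sigma' \circ \mathrm{inv}]$, where $\mathrm{inv}(x) \coloneqq x^{-1}$. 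On $S^3 \subset \HH$ the map $\mathrm{inv}$ is quaternionic conjugation, which reverses three coordinates and hence has degree $-1$, so
\[[\rho] = [\sigma] - [\sigma'] \in \pi_3(\SO(4)).\]

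The main step is to show $\iota_*([\sigma']) = -1$. I would implement the outer automorphism of $\SO(4)$ swapping the two $S^3$ factors as an inner automorphism of $\SO(5)$: set $A(y) \coloneqq \bar{y} \in \O(4)$ and $B \coloneqq \mathrm{diag}(A, -1) \in \SO(5)$. A direct computation using $\overline{x \bar{y}} = y x^{-1}$ gives
\[B \cdot \iota(\sigma(x)) \cdot B^{-1} = \iota(\sigma'(x^{-1})).\]
Since $\SO(5)$ is path-connected, conjugation by $B$ is homotopic to the identity and therefore acts trivially on $\pi_3$; thus $\iota_*([\sigma]) = [\iota \circ \sigma' \circ \mathrm{inv}] = -\iota_*([\sigma'])$, whence $\iota_*([\sigma']) = -1$. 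Assembling the pieces, $\iota_*([\rho]) = 1 - (-1) = 2$ and $j_*(a, b) = a + 2b$ as claimed.

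The main obstacle is sign-bookkeeping: the orientation conventions of \S \ref{subsection:S3R4}--\S \ref{subsection:S3R5}, the degree of $\mathrm{inv}$, and the choice of $B$ must thread through consistently. An alternative route would run through the long exact sequence $\pi_4(S^4) \to \pi_3(\SO(4)) \to \pi_3(\SO(5)) \to 0$ of the evaluation fibration $\SO(4) \to \SO(5) \to S^4$, identifying the connecting map with the clutching function of $TS^4$ (Euler number $2$, $p_1 = 0$); but this requires essentially the same amount of sign tracking, so the quaternionic approach above seems cleanest.
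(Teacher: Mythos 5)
Your proof is correct. Note that the paper itself offers no proof of this proposition --- it is quoted from Steenrod and Hughes --- so there is no internal argument to compare against; the cited sources obtain the formula essentially by the route you sketch as an alternative, namely the exact sequence $\pi_4(S^4)\xrightarrow{\partial}\pi_3(\SO(4))\xrightarrow{\iota_*}\pi_3(\SO(5))\to 0$ of the fibration $\SO(4)\to\SO(5)\to S^4$, identifying $\ker\iota_*$ with the class of the clutching function of $TS^4$ (Euler number $2$, $p_1=0$, hence $2[\sigma]-[\rho]$ in these generators). Your primary argument is a genuinely different and arguably cleaner direct verification: the reduction to $\iota_*$ via naturality of the Smale--Hirsch correspondence is standard and valid; the identification $[\rho]=[\sigma]-[\sigma']$ via the double cover $S^3\times S^3\to\SO(4)$ and the $H$-space structure is correct (with the degree $-1$ of quaternionic conjugation properly accounted for); and the key step --- realizing the factor swap by conjugation with $B=\mathrm{diag}(A,-1)\in\SO(5)$, which is based-homotopic to the identity since $\SO(5)$ is connected and $c_{B_t}$ fixes the identity element --- correctly yields $\iota_*([\sigma'])=-1$ and hence $\iota_*([\rho])=2$. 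The computation $B\,\iota(\sigma(x))\,B^{-1}=\iota(\sigma'(x^{-1}))$ checks out using $\overline{x\bar y}=y\bar x$. What your approach buys is that it avoids the characteristic-class computation for $TS^4$ entirely, at the cost of the quaternionic bookkeeping, which you have carried out consistently with the conventions of \S\ref{subsection:S3R4}--\S\ref{subsection:S3R5}.
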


Consequently, we have the following as a corollary of Corollary \ref{thm:Kin15}, Theorem \ref{thm:NP15}, and Proposition \ref{thm:relationship-push}. It was pointed out by \cite{Kin15, NP15}.

\begin{corollary}\label{cor:reg-htpicity-AD}
{\em
Consider one type of simple singularities and the corresponding Dynkin diagram $G$.
If the singularity is of type $A$ or $D$, then two immersions $\Phi|_\mathfrak{S} = f \circ p \colon S^3 \imm \R^5$ and $j \circ g_G \circ p \colon S^3 \imm \R^5$ are regularly homotopic.
}
\end{corollary}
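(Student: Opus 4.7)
The plan is to exploit the fact that the Smale invariant $\Omega \colon \Imm[S^3, \R^5] \xrightarrow{\cong} \pi_3(\SO(5)) \cong \Z$ is a complete invariant, so it suffices to verify the numerical equality $\Omega(f \circ p) = \Omega(j \circ g_G \circ p)$ for each Dynkin diagram $G$ of type $A$ or $D$. The left-hand side is already supplied by Theorem \ref{thm:NP15}, and the right-hand side can be assembled from the formula for $\Omega(g_G \circ p) \in \pi_3(\SO(4))$ together with the pushforward rule $j_*$.

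More concretely, I would first recall from Corollary \ref{thm:Kin15-rev} that
\[
\Omega(g_G \circ p) =
\begin{cases}
(-n^2 - 1,\ 1) & \text{for } A_{n-1},\\
(-4n^2 - 12n - 1,\ 1) & \text{for } D_{n+2},
\end{cases}
\]
then apply the homomorphism $j_* \colon \Z \oplus \Z \to \Z,\ (a,b) \mapsto a + 2b$ given by Proposition \ref{thm:relationship-push}. For $A_{n-1}$ this yields $j_*(-n^2-1,1) = -n^2 + 1 = -(n^2 - 1)$, and for $D_{n+2}$ it yields $-4n^2 - 12n + 1 = -(4n^2 + 12n - 1)$. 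Comparing these to the values of $\Omega(\Phi|_{\mathfrak{S}}) = \Omega(f \circ p)$ listed in Theorem \ref{thm:NP15}, one sees the two Smale invariants agree in both the $A$ and $D$ cases, so the two immersions are regularly homotopic.

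The only real subtlety, and the step I would be most careful about, is sign bookkeeping. One must ensure that the generator of $\pi_3(\SO(5))$ used implicitly in Theorem \ref{thm:NP15} (coming from counting complex Whitney umbrellas) is exactly the one fixed in \S \ref{subsection:S3R5} via $\iota \circ \sigma$; this compatibility is precisely what Remark \ref{rmk:sign1} and the cited comparison in \cite[Thm.~9.1.6]{NP15} establish. One also has to distinguish $g_G \circ p$ from Kinjo's $\bar{g}_G \circ p$ (Remark \ref{rmk:orientation-of-source}), which is exactly why Corollary \ref{thm:Kin15-rev}, rather than Theorem \ref{thm:Kin15} itself, is the correct input. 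Once these sign conventions are aligned, the verification reduces to the two arithmetic identities above, and no further geometric input is required.
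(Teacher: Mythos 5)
Your proposal is correct and follows exactly the paper's own route: combine Corollary \ref{thm:Kin15-rev} (the orientation-corrected Smale invariant of $g_G\circ p$), the pushforward formula $j_*(a,b)=a+2b$ of Proposition \ref{thm:relationship-push}, and Theorem \ref{thm:NP15}, then invoke completeness of the Smale invariant on $\Imm[S^3,\R^5]$. The arithmetic and the sign/orientation caveats you flag are precisely the ones the paper relies on.
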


We will refine and generalize this fact to the $E$ case in \S \ref{section:application}.

\subsection{Immersions of oriented 3-manifolds into 5-space}\label{subsection:ImmM3R5}

Let $M^3$ denote an arbitrary oriented 3-manifold (in \S \ref{subsection:Smaletype}, we will also assume that $M^3$ is closed and connected).
As is well-known, $M^3$ is parallelizable, i.e., the tangent bundle $TM^3$ can be trivialized.
{\it We fix a parallelization $\tau \colon TM^3 \xto{\cong} M^3 \times \R^3$ of $M^3$.}
Then for an immersion $f \colon M^3 \imm \R^5$, its differential $df \colon TM^3 \to T\R^5$ is represented as
\[A_{\tau, f} \colon M^3 \to (\R^5)^3, \quad x \mapsto \begin{bmatrix} df_x(e_1) & df_x(e_2) & df_x(e_3) \end{bmatrix},\]
where $e_i$ is the tangent vector on $M^3$ which corresponds to the $i$-th standard vector via $\tau$.
By orthonormalizing $A_{\tau, f}$, we have the map
\[\phi_{\tau, f} \colon M^3 \to V_{5,3} \cong \SO(5)/\SO(2).\]

We employ the Smale--Hirsch theory, which asserts the following.

\begin{theorem}[\cite{Sma59, Hir59}]
{\it 
Let $M$ and $N$ be manifolds such that $\dim M < \dim N$.
Let $\Imm(M, N)$ denote the space of all immersions of $M$ into $N$, and $\Mon(TM, TN)$ the space of all fiberwise injective homomorphisms between the tangent bundles $TM$ and $TN$. 
We endow $\Imm(M, N)$ with the $C^\infty$ topology, and $\Mon(TM, TN)$ with the compact open topology.
Then the natural inclusion
\[\Imm(M, N) \into \Mon(TM, TN), \quad f \mapsto df\]
is weak homotopy equivalence.
}
\end{theorem}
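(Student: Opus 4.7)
The plan is to establish the weak homotopy equivalence by proving the parametric form of the $h$-principle for the open differential relation defining immersions. Concretely, for every $k \ge 0$, I would show that any continuous family $F \colon D^k \to \Mon(TM, TN)$ whose restriction to $\partial D^k = S^{k-1}$ factors as $d \circ g$ for some family $g \colon S^{k-1} \to \Imm(M, N)$ can be deformed, relative to $S^{k-1}$, into a family of the form $d \circ \tilde g$ for some $\tilde g \colon D^k \to \Imm(M, N)$. The cases $k = 0, 1$ amount to surjectivity and injectivity on $\pi_0$, and the general case yields the isomorphism on higher homotopy groups.

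First I would fix a handle decomposition $M = H_1 \cup \cdots \cup H_r$ and induct on the partial unions $M_j = H_1 \cup \cdots \cup H_j$. The base case is integration of a prescribed frame along a single disk, which succeeds because $\dim N > \dim M$ leaves room to desingularize any obstruction. The inductive step attaches a handle $H_j = D^{\lambda_j} \times D^{m-\lambda_j}$ with $m = \dim M$: by contracting the cocore factor and using a tubular neighborhood, the problem reduces to extending an immersion defined near $\partial D^{\lambda_j}$, together with prescribed formal derivative data on all of $D^{\lambda_j}$, to a genuine immersion on $D^{\lambda_j}$. Because $\lambda_j \le m < \dim N$, the immersion relation is open and ample in the jet bundle, so either Smale's and Hirsch's explicit twisting constructions on disks or Gromov's convex integration produce the desired holonomic solution compatible with the formal data.

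The hardest step will be this handle extension, because one must simultaneously preserve the immersion property on all of $D^{\lambda_j}$ while maintaining $C^0$-closeness of the full jet to the prescribed formal data over $TM$. Iterating through $H_1, \ldots, H_r$, and running everything with continuous dependence on the parameter in $D^k$, assembles the required $\tilde g$ and hence the isomorphism on all $\pi_k$. The codimension condition $\dim M < \dim N$ is invoked twice: once in the base case to extend a frame along the initial disk, and once at every handle attachment to ensure ampleness of the differential relation. Finally, the compatibility between the $C^\infty$ topology on $\Imm(M, N)$ and the compact-open topology on $\Mon(TM, TN)$ is handled by a routine smoothing step, so that the constructions above indeed realize the stated weak equivalence on topological spaces.
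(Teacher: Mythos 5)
The paper does not prove this statement at all: it is quoted as the classical Smale--Hirsch theorem and used as a black box, with the proof delegated to \cite{Sma59} and \cite{Hir59}. So the relevant comparison is between your outline and the classical argument, and there your plan reproduces the standard architecture faithfully: formulate the weak equivalence as a parametric $h$-principle (lifting maps of $D^k$ rel $S^{k-1}$ through $f\mapsto df$), induct over a handle decomposition of $M$, reduce each handle attachment to an extension problem over the core disk, and use $\dim M<\dim N$ to make the immersion relation open and ample (codimension of the principal singular locus is $\dim N-\dim M+1\ge 2$, so ampleness does hold as you claim). This is exactly Hirsch's reduction to Smale's disk theorem, modernized in convex-integration language.

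The gap is that the step you yourself flag as hardest is the entire content of the theorem, and your proposal only names it. Two things are asserted without argument: (i) the base case, i.e.\ that $\Imm(D^m,N)\to\Mon(TD^m,TN)$ is a weak equivalence --- this is Smale's main technical result and is not a matter of ``room to desingularize''; and (ii) the inductive step, which requires not merely ``extending over the core'' but the covering homotopy property of the restriction maps $\Imm(M_j,N)\to\Imm(M_{j-1},N)$ (or, in the convex-integration route, the parametric, relative version of the one-dimensional integration lemma together with the induction over principal directions in a chart). Without proving one of these --- Smale--Hirsch's fibration theorem or Gromov's parametric convex integration for open ample relations --- the induction does not close, and the ``continuous dependence on the parameter in $D^k$'' you invoke at the end is precisely what the fibration property is needed to guarantee. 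As a roadmap your proposal is correct; as a proof it is missing its engine.
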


Then we have that the correspondence
\[\Imm[M^3, \R^5] \to [M^3, V_{5,3}], \quad [f] \mapsto [\phi_{\tau, f}]\]
is a bijection.
By homotopy-theoretical arguments based on this bijection, Wu concluded the following.
For a class $\chi \in H^2(M^3; \Z)$, let $\Imm[M^3, \R^5]_\chi$ denote the set of all regular homotopy classes of immersions of $M^3$ into $\R^5$ with normal Euler class $\chi$.

\begin{theorem}[{\cite[Theorem 2]{Wu64}, see also \cite{Li82, SST02, Juh05}}]\label{Wu}
{\em
For any immersion $f \colon M^3 \imm \R^5$, its normal Euler class is of the form $2C \in H^2(M^3; \Z)$, where $C \in H^2(M^3; \Z)$.
Furthermore, for any $\chi \in H^2(M^3; \Z)$, there is a bijection 
\[\Imm[M^3, \R^5]_\chi \to \Gamma_2(\chi) \times H^3(M^3; \Z)/(2\chi \smile H^1(M^3; \Z)),\]
where
\[\Gamma_2(\chi) \coloneqq \{ C \in H^2(M^3; \Z) \mid 2C = \chi \}.\]
}
\end{theorem}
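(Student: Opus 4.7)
The plan is to reduce the problem to a homotopy classification on the Stiefel manifold $V_{5,3}$ via Smale--Hirsch, and then to extract the stated structure by obstruction theory along the cellular structure of $M^3$.

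The parity statement can be shown directly. Since $M^3$ is parallelizable, $TM^3$ is trivial, and the splitting $f^{*}T\R^5 \cong TM^3 \oplus \nu(f)$ (with $f^{*}T\R^5$ trivial as well) forces $w_2(\nu(f)) = 0$. For an oriented rank-$2$ bundle the Euler class reduces modulo $2$ to the second Stiefel--Whitney class, so the normal Euler class is even, and we write it as $2C$ for some $C \in H^2(M^3; \Z)$.

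For the bijection, Smale--Hirsch identifies $\Imm[M^3, \R^5]$ with $[M^3, V_{5,3}]$ via the differential, and this identification is realized explicitly by $f \mapsto \phi_{\tau,f}$. From the fibration $\SO(2) \to \SO(5) \to V_{5,3}$ I would read off $\pi_1(V_{5,3}) = 0$, $\pi_2(V_{5,3}) \cong \Z$, $\pi_3(V_{5,3}) \cong \Z$, and observe that the connecting map $\pi_2(V_{5,3}) \to \pi_1(\SO(2)) = \Z$ is multiplication by $2$, since its cokernel is $\pi_1(\SO(5)) = \Z/2$. By Hurewicz and universal coefficients this implies that the classifying map $\kappa \colon V_{5,3} \to B\SO(2)$ of the tautological orthogonal $2$-plane bundle over $V_{5,3}$ sends the Euler class of the universal bundle to $2c$, where $c$ generates $H^2(V_{5,3}; \Z)$. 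Setting $C := \phi_{\tau,f}^{*} c \in H^2(M^3; \Z)$, the normal Euler class of $f$ equals $\phi_{\tau,f}^{*}(\kappa^{*} e) = 2C$, and as $[f]$ varies over $\Imm[M^3, \R^5]_\chi$ the class $C$ ranges exactly through $\Gamma_2(\chi)$, producing the first factor.

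The second, Smale-type, factor arises from the extension problem over the $3$-cells. For each fixed $C \in \Gamma_2(\chi)$ one gets a homotopy class of $\phi_{\tau,f}$ on the $2$-skeleton, and the extensions over $M^3$ form a torsor under $H^3(M^3; \pi_3(V_{5,3})) = H^3(M^3; \Z)$. However, different homotopy classes of $\phi_{\tau,f}$ on the $2$-skeleton that preserve the primary invariant $C$ modify the chosen extension by a subgroup of $H^3(M^3; \Z)$ generated by cup products of $H^1(M^3; \Z)$ with a Whitehead-type cohomology operation on $V_{5,3}$. The main technical obstacle I anticipate is the computation of this Whitehead square on $V_{5,3}$ and the verification that the resulting indeterminacy is exactly $2\chi \smile H^1(M^3; \Z)$; once this is done, combining the datum $C \in \Gamma_2(\chi)$ with the residual Smale-type class in $H^3(M^3; \Z)/(2\chi \smile H^1(M^3; \Z))$ produces the asserted bijection.
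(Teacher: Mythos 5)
Your argument for the first assertion is sound: the triviality of $TM^3$ and of $f^*T\R^5$ forces $w_2(\nu(f))=0$, and the Bockstein exact sequence then gives $e(\nu(f))\in 2H^2(M^3;\Z)$. This reaches the same conclusion as the outline given in the paper (which instead pulls back the canonical $\SO(2)$-bundle $\rho\colon\SO(5)\to V_{5,3}$ and applies the Gysin sequence to see that its Euler class is $2\Sigma$); your computation of the connecting homomorphism $\pi_2(V_{5,3})\to\pi_1(\SO(2))$ is the homotopy-theoretic counterpart of that Gysin argument, and your identification of the first factor $\Gamma_2(\chi)$ with the primary invariant $(\phi_{\tau,f})^*(\Sigma)$ agrees with the paper's Definition of the Wu invariant. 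Note that the paper itself does not prove this theorem beyond that outline --- it is quoted from Wu, Li, Saeki--Sz\H ucs--Takase and Juh\'asz --- so the relevant comparison is with the standard obstruction-theoretic proof in those sources, which is indeed the route you take.

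The genuine gap is in the second factor. You correctly set up the problem: for a fixed primary invariant $C\in\Gamma_2(\chi)$, extensions of $\phi_{\tau,f}|_{\mathrm{sk}_2 M^3}$ over the $3$-cells are acted on by $H^3(M^3;\pi_3(V_{5,3}))\cong H^3(M^3;\Z)$, and the resulting difference class is only well defined modulo the indeterminacy coming from changing the homotopy on the $2$-skeleton. But the identification of that indeterminacy subgroup with $2\chi\smile H^1(M^3;\Z)$ is precisely the substance of Wu's theorem, and you explicitly defer it (``the main technical obstacle I anticipate is the computation of this Whitehead square \dots and the verification that the resulting indeterminacy is exactly $2\chi\smile H^1(M^3;\Z)$''). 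Concretely, one must compute the pairing $\pi_2(V_{5,3})\otimes\pi_2(V_{5,3})\to\pi_3(V_{5,3})$ induced by the Whitehead product (equivalently, the relevant $k$-invariant of $V_{5,3}$), show that altering the homotopy on the $2$-skeleton by a class $h\in H^1(M^3;\pi_2(V_{5,3}))$ shifts the difference class by the cup product of $h$ with the primary invariant under this pairing, and check that on generators this works out to $2\chi\smile h=4C\smile h$ rather than, say, $\chi\smile h$ or $C\smile h$ --- the factor of $2$ (and its interaction with torsion in $H^2(M^3;\Z)$) is exactly where a slick argument can go wrong. You also do not address why the resulting assignment is injective and surjective once the indeterminacy is pinned down. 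As written, the proposal is a correct reduction to the known framework, but the theorem's central computation is named rather than performed.
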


We explain two components appearing in the right-hand side of this bijection.

\subsubsection{Wu invariant}

The Wu invariant is the first component of the bijection in Theorem \ref{Wu}.
For its precise definition, we outline the proof of the former assertion in Theorem \ref{Wu} as follows. 
For an immersion $f \colon M^3 \imm \R^5$, its normal bundle is obtained as the pull-back of the natural $\SO(2)$-bundle
\[\rho \colon \SO(5) \to \SO(5)/\SO(2)\]
by $\phi_{\tau, f} \colon M^3 \to \SO(5)/\SO(2)$.
Applying the Gysin exact sequence, we see that the Euler class of $\rho$ is of the form $2\Sigma$, where $\Sigma$ is a generator of $H^2(V_{5,3}; \Z) \cong \Z$.
Then, by the naturality of Euler classes, the normal Euler class $\chi$ of $f$ coincides with $2 (\phi_{\tau, f})^*(\Sigma)$.
Hence we have that $(\phi_{\tau, f})^*(\Sigma) \in \Gamma_2(\chi)$.

\begin{definition}[Wu invariant {\cite[Theorem 2]{Wu64}}, {\cite[Definition 4]{SST02}}]\label{Wuinv}
Let $f \colon M^3 \imm \R^5$ be an immersion with normal Euler class $\chi$, and $\phi_{\tau, f} \colon M^3 \to V_{5,3}$ the induced map.
Then the {\it Wu invariant} of $f$ with respect to $\tau$ is defined as the second cohomology class
\[c_\tau(f) \coloneqq (\phi_{\tau, f})^*(\Sigma) \in \Gamma_2(\chi),\]
where $\Sigma$ is the generator of $H^2(V_{5,3}; \Z) \cong \Z$ so that the Euler class of the natural $\SO(2)$-bundle $\rho \colon \SO(5) \to \SO(5)/\SO(2)$ coincides with $2\Sigma$.
\end{definition}

Clearly, the class $c_\tau(f)$ is invariant up to regular homotopy.

\begin{remark}
In \cite{SST02}, the class $c_\tau(f)$ is simply denoted by $c(f)$ under the contract that a parallelization is fixed.
It is important to specify which is the fixed parallelization, since the class $c_\tau(f)$ depends on the choice of $\tau$. (see, e.g., below Definition 4 in \cite[p.18]{SST02}).
In our results, we will choice a certain parallelization introduced in \S \ref{section:al_ctct} (see Definition \ref{dfn:al-ctct-paral}), and discuss the way of switching of the Wu invariant by changing parallelizations (see Appendix \ref{app:switch}).
\end{remark}

\begin{remark}
For the case where $\chi = 0$, Saeki--Sz\u cs--Takase gave another expression of the Wu invariant, which is different from ours. See \cite[\S 3]{SST02} for details.
\end{remark}

\subsubsection{Smale-type invariant}\label{subsection:Smaletype}

The second component of the bijection in Theorem \ref{Wu} can be identified with the Smale invariant if the 3-manifold $M^3$ is the 3-sphere, but that was given in a complicated way originally.
Saeki--Sz\u cs--Takase \cite{SST02} gave its geometrically clear expression for the case where $\chi = 0$ (i.e., the normal bundle of a given immersion is trivial).
After that, Juh\'asz generalized it for arbitrary $\chi$ \cite{Juh05}.
In below, we review only a part of \cite[\S 5]{SST02}.

Hereafter we assume that {\it the $3$-manifold $M^3$ is connected and closed}. 
For an immersion $f \colon M^3 \imm \R^5$, a {\it singular Seifert surface} of $f$ is a generic map $\hat{f} \colon X^4 \to \R^6_+$ from a compact oriented 4-manifold $X^4$ bounded by $M^3$ satisfying that $\hat{f}^{-1}(\R^5) = M^3$, $\hat{f}|_{M^3} = f$, and $\hat{f}$ has no singularities near the boundary.

\begin{definition}[{\cite[Definition 5]{SST02}}]\label{dfn:alpha}
We define an integer $\alpha(M^3)$ to be the dimension of the $\Z_2$-vector space $T \otimes_\Z \Z_2$, where $T$ is the torsion subgroup of $H_1(M^3; \Z)$.
\end{definition}

\begin{definition}[Smale-type invariant {\cite[Definitions 5 and 7]{SST02}}]\label{Smaletype}
Let $f \colon M^3 \imm \R^5$ be an immersion with trivial normal bundle and $\hat{f} \colon X^4 \to \R^6_+$ a singular Seifert surface of $f$.
Then define the {\it Smale-type invariant} of $f$ to be
\[i(f) \coloneqq \frac{3}{2}(\sigma(X^4) - \alpha(M^3)) + \frac{1}{2}(3t(\hat{f}) - 3l(\hat{f}) + L_\nu(f)) \in \Z,\]
where $t(\hat{f})$, $l(\hat{f})$, and $L_\nu(f)$ are integers obtained from the singularity of $\hat{f}$ and the double locus of $f$ (see \cite[\S 2 and 5]{SST02}).
\end{definition}

We here note only that if $\hat{f}$ can be taken as an embedding, the invariants $t(\hat{f})$, $l(\hat{f})$, and $L(f)$ all vanish.
Saeki--Sz\u cs--Takase showed that the integer $i(f)$ is well-defined and invariant up to regular homotopy. They also showed the following.

\begin{theorem}[{\cite[Theorem 6]{SST02}}]\label{complete}
{\it
Let $\tau$ be a parallelization of $M^3$.
Then the correspondence
\[(c_\tau, i) \colon \Imm[M^3, \R^5]_0 \to \Gamma_2(0) \times \Z\]
is a bijection, that is, the regular homotopy class of a given immersion with trivial normal bundle is determined by its Wu and Smale-type invariants.
}
\end{theorem}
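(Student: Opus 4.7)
The plan is to reduce the statement to Wu's Theorem~\ref{Wu} together with the well-definedness and an additivity property of $i(f)$. Setting $\chi = 0$ kills the cup product factor $2\chi \smile H^1(M^3;\Z)$, so Wu's bijection specializes to
\[
\Imm[M^3, \R^5]_0 \longleftrightarrow \Gamma_2(0) \times H^3(M^3; \Z).
\]
Since $M^3$ is closed, connected, and oriented, $H^3(M^3;\Z) \cong \Z$, and by Definition~\ref{Wuinv} the first component is already identified with $c_\tau$. Thus the task reduces to identifying $i$ with the second factor.

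First, I would verify that $i(f)$ is well-defined, i.e., independent of the choice of singular Seifert surface $\hat f$. Given two such surfaces $\hat f_0 \colon X_0 \to \R^6_+$ and $\hat f_1 \colon X_1 \to \R^6_+$, gluing $-X_0$ to $X_1$ along $M^3$ and reflecting one side across $\R^5$ yields a generic map of a closed oriented $4$-manifold into $\R^6$. Cobordism-type identities relating the signature, the Thom-polynomial counts of the singular strata, and the self-intersection data of the double locus then force the combination $\tfrac{3}{2}\sigma(X^4) + \tfrac{1}{2}(3t - 3l + L_\nu)$ computed from $\hat f_0$ and from $\hat f_1$ to agree. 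A parallel argument applied to the singular Seifert surface swept out by a regular homotopy $f_t$ shows that $i$ is a regular-homotopy invariant.

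Next, I would exploit the connected-sum action of $\Imm[S^3,\R^5] \cong \Z$ on $\Imm[M^3,\R^5]_0$. Since connected sum at an interior point of $M^3$ leaves the Wu invariant unchanged, Wu's theorem implies this action preserves $c_\tau$ and acts freely and transitively on each of its fibers. The key computation is then the additivity
\[
i(f \# h) = i(f) + \Omega(h) \quad \text{for any } h \in \Imm[S^3,\R^5],
\]
where $\Omega(h)$ is the Ekholm--Sz\H{u}cs Smale invariant. A singular Seifert surface of $f \# h$ can be assembled by boundary-connect-summing those of $f$ and $h$; signatures add, each of $t, l, L_\nu$ splits additively into contributions from the two summands, and $\alpha(M^3 \# S^3) = \alpha(M^3)$, so the SST formula in Definition~\ref{Smaletype} compared with Theorem~\ref{thm:ES03} produces precisely the shift $\Omega(h)$. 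Granting this, injectivity follows: if $c_\tau(f_0) = c_\tau(f_1)$ then Wu's theorem gives a unique $h$ with $f_1 \sim f_0 \# h$, and $i(f_0)=i(f_1)$ forces $\Omega(h)=0$, hence $f_0 \sim f_1$. Surjectivity follows dually, by first realizing the prescribed Wu value through Wu's theorem and then shifting the $\Z$ component by a suitable connected sum.

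The main obstacle is the well-definedness of $i(f)$: each of $t(\hat f), l(\hat f), L_\nu(f)$ depends delicately on the singular Seifert surface and on the resolution of its double locus, and isolating the exact linear combination that descends to an invariant of $f$ alone requires a careful bookkeeping of how these numbers transform under a four-dimensional cobordism of singular surfaces in $\R^6_+$, matching boundary terms across the two sides. Once this well-definedness (and the parallel regular-homotopy invariance) is established, the bijection follows cleanly from Wu's theorem plus the Ekholm--Sz\H{u}cs formula.
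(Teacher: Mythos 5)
The paper does not prove this statement at all: it is quoted verbatim from Saeki--Sz\H{u}cs--Takase \cite{SST02} (their Theorem~6), so there is no in-paper argument to compare against. Judged on its own terms, your outline correctly identifies the reduction that the cited source actually uses: specialize Wu's bijection (Theorem~\ref{Wu}) to $\chi=0$, note that the first factor is $c_\tau$ by Definition~\ref{Wuinv}, and then identify $i$ with the remaining $H^3(M^3;\Z)\cong\Z$ factor via the connected-sum action of $\Imm[S^3,\R^5]\cong\Z$, using that this action fixes $c_\tau$ (it is supported in a ball, hence invisible on the $2$-skeleton) and is simply transitive on the fibers, together with the normalization $i|_{\Imm[S^3,\R^5]}=\Omega$ coming from comparing Definition~\ref{Smaletype} with Theorem~\ref{thm:ES03}.

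The gap is that the two load-bearing inputs of this reduction are asserted rather than proved, and they are precisely the hard content of \cite{SST02}. First, the well-definedness of $i(f)$ --- independence of the choice of singular Seifert surface and invariance under regular homotopy --- is not a routine ``cobordism-type identity'': it rests on Sz\H{u}cs-type results on cobordisms of singular maps of $4$-manifolds into $\R^6$ relating $\sigma$, the $\Sigma^{1,1}$- and cusp-counts, and the linking term $L_\nu$, and on the appearance of the correction term $\alpha(M^3)$ (which your gluing sketch does not account for; note $\alpha$ does not arise from any cobordism of closed $4$-manifolds, but from the normalization needed to make $i$ integer-valued). You flag this as ``the main obstacle'' and then assume it. Second, the additivity $i(f\# h)=i(f)+\Omega(h)$ requires checking that a boundary-connected-sum of singular Seifert surfaces is again generic with additive singularity and double-locus data, and that no cross-terms appear in $L_\nu$; this is plausible but is exactly the kind of bookkeeping that must be done, not declared. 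As it stands the proposal is a correct strategic skeleton of the proof in \cite{SST02}, not a proof.
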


\section{Contact and almost contact structures}\label{section:al_ctct}

In this section, we recall the notions of contact and almost contact structures.
See \cite{EM02, Gei08} for details.
We also make a preparation for our main results.

\subsection{Contact structures}

\begin{definition}[contact manifold]
Let $M^{2m+1}$ be a $(2m+1)$-manifold.
A {\it contact structure} on $M^{2m+1}$ is a maximally non-integrable hyperplane distribution $\xi \subset TM^{2m+1}$, i.e., if $\xi$ is locally defined by a 1-form $\alpha$ as $\xi = \Ker \alpha$, then the $(2m+1)$-form $\alpha \wedge (d\alpha)^m$ nowhere vanishes.
We call the pair $(M^{2m+1}, \xi)$ a {\it contact manifold}.
In the case where $\xi$ is cooriented (i.e., $\xi$ is globally defined by a 1-form), $(M^{2m+1}, \xi)$ is also said to be {\it cooriented}.
\end{definition}

\begin{remark}\label{rmk:cfml}
For a contact manifold $(M^{2m+1}, \xi)$, a defining 1-form $\alpha$ of $\xi$ induces the symplectic structure $d\alpha|_\xi$ on $\xi$.
Notice that the conformal class of $d\alpha|_\xi$ depends only on $\xi$.
Indeed, for any non-vanishing function $f$ on $M^{2m+1}$, it holds that $d(f\alpha)|_\xi = f \cdot d\alpha|_\xi$.
\end{remark}

Let $J_0$ be the standard complex structure on $\C^3$.

\begin{example}
Let $S^5_\epsilon \subset \C^3$ be the 5-sphere with radius $\epsilon > 0$ and centered at the origin.
Then define a hyperplane distribution $\xi_\mathrm{std} \subset TS^5_\epsilon$ as the complex tangency 
\[\xi_\mathrm{std} = TS^5_\epsilon \cap J_0(TS^5_\epsilon).\]
This forms a cooriented contact structure on standardly oriented $S^5_\epsilon$. We call this the {\it standard contact structure} on the 5-sphere.
\end{example}

\begin{example}\label{eg:link-ctct-strc}
Let $K$ be the link of an isolated surface singularity in $\C^3$.
Then define a hyperplane distribution $\xi_\mathrm{can} \subset TK$ as the complex tangency
\[\xi_\mathrm{can} = TK \cap J_0(TK).\]
This forms a cooriented contact structure on standardly oriented $K$ as well. We call this the {\it canonical contact structure} on the link $K$.
\end{example}

We can verify that the inclusion map $f \colon K \into S^5_\epsilon$ forms a {\it contact embedding}, i.e., 
\[TK \cap \xi_\mathrm{std}|_K = \xi_\mathrm{can}.\]

\subsection{Almost contact structures}

The following notion, which is weaker than the notion of contact structures, will fit to our later arguments.

\begin{definition}[almost contact manifold, e.g., {\cite[p.100, \S 10.1.B]{EM02}}]\label{def:al_ctct}
Let $M^{2m+1}$ be a $(2m+1)$-manifold.
An {\it almost contact structure} on $M^{2m+1}$ is the pair $(\xi, \omega)$ of a hyperplane distribution $\xi \subset TM^{2m+1}$ and a symplectic form $\omega$ on $\xi$ valued in the line bundle $TM^{2m+1}/\xi$.
We call the triplet $(M^{2m+1}; \xi, \omega)$ an {\it almost contact manifold}.
In the case where $\xi$ is cooriented, $(M^{2m+1}; \xi, \omega)$ is also said to be {\it cooriented}.
\end{definition}

\begin{example}
Let $(M^{2m+1}, \xi)$ be a contact manifold. On each point $x \in M^{2m+1}$, we have the hyperplane field $\xi$ and the symplectic structure $d\alpha|_\xi$ for some defining 1-form $\alpha$ of $\xi$ (Remark \ref{rmk:cfml}).
Hence every contact manifold determines an almost contact structure up to conformal equivalence.
\end{example}

For an oriented real vector space $V$ of dimension $2m+1$, let us call the pair $(H, \Omega)$ of a cooriented hyperplane $H$ and its symplectic structure $\Omega$ as a {\it symplectic hyperplane}.
A symplectic hyperplane $(H, \Omega)$ is said to be {\it positive} or {\it negative} if the orientation $\Omega^m$ on $H$ and the coorientation on $H$ form the orientation on $V$ or its reversion, respectively. 
Let $\mathcal{C}_+(V)$ denote the space of all positive symplectic hyperplanes in $V$.

\begin{proposition}\label{moduli}
{\it 
For any oriented real vector space $V$ of dimension $2m+1$, the space $\mathcal{C}_+(V)$ is homotopy equivalent to $\SO(2m+1)/\U(m)$.
Therefore, every cooriented and positive almost contact structure on $M^{2m+1}$ defines a section of the $\SO(2m+1)/\U(m)$-bundle over $M^{2m+1}$ associated to $TM^{2m+1}$.
}
\end{proposition}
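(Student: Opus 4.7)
The plan is to identify $\mathcal{C}_+(V)$ with (a deformation retract of) the homogeneous space $\SO(2m+1)/\U(m)$ by introducing an auxiliary inner product on $V$, and then transfer this identification fiberwise to obtain the claimed section.

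First I would choose an inner product $g$ on $V$ compatible with the given orientation. For a positive symplectic hyperplane $(H, \Omega) \in \mathcal{C}_+(V)$, the coorientation of $H$ together with the orientation of $V$ induces an orientation on $H$, and the restricted metric $g|_H$ together with $\Omega$ determines a canonical $g|_H$-orthogonal complex structure $J_\Omega$ on $H$ via the polar decomposition applied to the skew-adjoint operator $A$ defined by $\Omega(u,v) = g|_H(Au, v)$ (explicitly $J_\Omega = A (-A^2)^{-1/2}$). Positivity of $\Omega$ ensures that $J_\Omega$ is compatible with the induced orientation. The straight-line homotopy from $\Omega$ to the 2-form $g|_H(J_\Omega \cdot , \cdot)$ stays inside the positive symplectic forms, giving a deformation retraction of the fiber onto the space of orientation-compatible $g|_H$-orthogonal complex structures on $H$.

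Next I would globalize this over the space of cooriented hyperplanes. The resulting deformation retract of $\mathcal{C}_+(V)$ consists of pairs $(H, J)$ where $H$ is a cooriented hyperplane in $V$ and $J$ is an orientation-compatible $g|_H$-orthogonal complex structure on $H$. The group $\SO(V) \cong \SO(2m+1)$ acts on this space, and the action is transitive: any two cooriented hyperplanes are related by an element of $\SO(V)$, and $\SO(2m)$ acts transitively on the orthogonal complex structures on a fixed hyperplane (with stabilizer $\U(m)$). Fixing a reference pair $(H_0, J_0)$, the stabilizer is exactly the copy of $\U(m)$ inside $\SO(2m) \subset \SO(2m+1)$ that preserves $H_0$, its induced orientation, and $J_0$. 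Therefore the deformation retract is diffeomorphic to $\SO(2m+1)/\U(m)$, proving the first claim.

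For the second assertion, fix an orientation-compatible Riemannian metric on $M^{2m+1}$, so that the oriented orthonormal frame bundle of $TM^{2m+1}$ is a principal $\SO(2m+1)$-bundle. The fiber bundle $\mathcal{C}_+(TM^{2m+1}) \to M^{2m+1}$ of positive symplectic hyperplanes in the fibers of $TM^{2m+1}$ is associated to this principal bundle with fiber $\mathcal{C}_+(\R^{2m+1})$, and the fiberwise deformation retraction constructed above exhibits it as fiber-homotopy equivalent to the associated $\SO(2m+1)/\U(m)$-bundle. A cooriented positive almost contact structure $(\xi, \omega)$ is by definition a section of $\mathcal{C}_+(TM^{2m+1}) \to M^{2m+1}$, so composing with the deformation retraction produces the desired section of the $\SO(2m+1)/\U(m)$-bundle.

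The main technical point will be to verify that the polar-decomposition retraction from positive symplectic forms to orthogonal complex structures is continuous and indeed a strong deformation retract; the remainder is essentially homogeneous-space bookkeeping.
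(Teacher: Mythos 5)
Your argument is correct; in fact the paper states this proposition without any proof (it only records afterwards how $\U(m)$ sits inside $\SO(2m)\subset\SO(2m+1)$), so your write-up supplies exactly the standard justification that was omitted. The one step you defer --- that the polar-decomposition retraction is a strong deformation retraction through positive symplectic forms --- does go through: writing $\Omega(u,v)=g|_H(Au,v)$ with $A$ skew-adjoint and invertible, the straight-line path corresponds to the operator $(1-t)A+tJ_\Omega=J_\Omega\bigl((1-t)(-A^2)^{1/2}+t\,\mathrm{id}\bigr)$, which is invertible for every $t\in[0,1]$, so $\Omega_t$ stays nondegenerate and, by continuity of $\Omega_t^m$, stays in the positive component; the homotopy is constant on forms already of the shape $g|_H(J\cdot,\cdot)$. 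The only other point worth making explicit is that $\SO(2m)$ acts transitively on the \emph{orientation-compatible} orthogonal complex structures, i.e.\ on one of the two components of $\O(2m)/\U(m)$, which is precisely the component singled out by positivity; with that noted, the homogeneous-space identification and the fiberwise globalization are exactly as you describe.
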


%\begin{Proof}
%Let us consider the natural action of $\GL_+(2n+1, \R)$ on $\mathcal{C}_{2n+1}$, which is free and transitive. Also, for $(\epsilon_1, \epsilon_2 \wedge \epsilon_3 + \dots + \epsilon_{2n} \wedge \epsilon_{2n+1}) \in \mathcal{C}_{2n+1}$, its stabilizer is isomorphic to the subgroup
%\[\begin{bmatrix} 1 & \R^{2n} \\ 0 & \Sp(2n, \R) \end{bmatrix}.\]
%Hence we have that the homotopy equivalence $\mathcal{C}_{2n+1} \simeq \SO(2n+1)/\U(n)$. $\qed$
%\end{Proof}

\noindent{Here the unitary subgroup $\U(m)$ is embedded into the special orthogonal group $\SO(2m)$ as the subgroup $\{A \in \SO(2m) \mid J_0A = AJ_0\}$, where $J_0$ is the standard complex structure on $\R^{2m} = \C^m$.}
Also $\SO(2m)$ is embedded into $\SO(2m+1)$ in the standard way.

\begin{remark}[low-dimensional cases]\label{3-dim}
If $m = 1$, then we have that $\SO(3)/\U(1) \cong S^2$. 
This means, roughly saying, that a symplectic structure on a 2-plane determines only an orientation of the plane up to homotopy.
For the case where $m = 2$, we describe the identification $\mathcal{C}_+(V) \simeq \SO(5)/\U(2)$ in \S \ref{section:Wuinv}.
\end{remark}

We also generalize the notion of contact embedding as follows.

\begin{definition}[almost contact embedding]
Let $(M^{2m+1}; \xi, \omega)$ and $(N^{2n+1}; \eta, \varpi)$ be almost contact manifolds. 
An {\it almost contact embedding} of $(M^{2m+1}; \xi, \omega)$ into $(N^{2n+1}; \eta, \varpi)$ is an embedding $f \colon M^{2m+1} \into N^{2n+1}$ satisfying that 
\[Tf(M^{2m+1}) \cap \eta|_{f(M^{2m+1})} = df(\xi)\]
and $(df(\xi), \omega)$ is a symplectic subspace field of $(\eta|_{f(M^{2m+1})}, \varpi')$, where $\varpi'$ is a symplectic structure belonging to the same conformal class as $\varpi$.
\end{definition}

The following notion will help us to investigate properties of Wu invariants of almost contact embeddings (\S \ref{subsection:Wuinv-ctct-emb}) and also other immersions (\S \ref{subsection:Wuinv-cpx-tan}).

\begin{definition}[almost contact parallelization]\label{dfn:al-ctct-paral}
Let $(M^3; \xi, \omega)$ be a cooriented almost contact manifold such that $\xi$ is trivializable as a 2-plane field.
A parallelization $\tau \colon TM^3 \xto{\cong} M^3 \times \R^3$ of $M^3$ is said to be an {\it almost contact parallelization} if the following holds via $\tau$:
\begin{itemize}
\item[(i)] the constant vector field $e_1$ gives the coorientation of $\xi$;
\item[(ii)] the constant vector fields $e_2$ and $e_3$ span $\xi$, and $(e_2, e_3)$ gives the orientation of $\xi$ determined by $\omega$.
\end{itemize}
\end{definition}

\subsection{Reconstruction of Kinjo's immersions}\label{subsection:reconstruction-identification}

At the end of this section, we reconstruct the immersion $g_G \colon M(G) \imm \R^4$ introduced in \S \ref{subsubsection:imm-Kinjo}.
We also identify $M(G)$ with the link $K$ of the corresponding singularity.
This construction motivates the study of the Wu invariant in \S \ref{subsection:Wuinv-cpx-tan}.

\subsubsection{Reconstruction for the $A_1$-case}

We consider the {\it Whitney 2-sphere} \cite{Whi44}
\[w \colon S^2 \imm \C^2, \quad w(x_1, x_2, y) = (1 + y\sqrt{-1}) (x_1, x_2).\]
This map has only one positively transverse double point $h_0(0, 0, \pm 1) = (0, 0)$.
Moreover, this is a real analytic and totally real immersion with respect to the standard complex structure $J_0$ on $\C^2$.
We take the disk normal bundle of $w$.
Since the normal Euler number of $w$ is $-2$, the total space is nothing but the disk cotangent bundle $DT^*S^2 = X(A_1)$.
Then we have immersions
\[\hat{g}_{A_1} \colon X(A_1) \imm \C^2 \quad \text{and} \quad g_{A_1} = \hat{g}_{A_1}|_{M(A_1)} \colon M(A_1) \imm \C^2.\]
We now induce a complex structure $J$ on $X(A_1)$ by pulling-back $J_0$, which is the complexification of the zero section $S^2$. We also define an almost contact structure $\xi'_\mathrm{can}$ on $M(A_1)$ by
\[\xi'_\mathrm{can} = TM(A_1) \cap J(TM(A_1))\]
(we endow $\xi'_\mathrm{can}$ with the orientation as a complex line field).
Note that $\hat{g}_{A_1}$ is holomorphic, and hence it holds that $dg_{A_1}(\xi'_\mathrm{can}) \subset T\C^2$ is complex everywhere. 
We will focus on this property (see \S \ref{subsection:Wuinv-cpx-tan}).

%In fact, {\it $(M(A_1), \xi'_\mathrm{can})$ is contactomorphic to the link $(\partial F, \xi_\mathrm{can})$ of type $A_1$}. This is the conclusion from the fact that there is a unique fillable contact structure on $UT^*S^2$. We can identify these two manifolds via this contactomotphism.

\subsubsection{Reconstruction for the general case}

Let $G$ be one of Dynkin diagrams of types $A$-$D$-$E$.
We consider the union of 2-spheres embedded in some Euclidean space according to the following:
\begin{itemize}
\item corresponding to each vertex of $G$, embed one 2-sphere;
\item corresponding to each edge of $G$, arrange two 2-spheres so that they transversely intersect at one point.
\end{itemize}
Then we construct a map
\[w_G \colon S^2 \cup \dots \cup S^2 \to \C^2\]
such that each restriction $w_G|_{S^2}$ coincides with the Whitney 2-sphere $w$ after some rotation and parallel shift and is still totally real, and all 2-spheres transversely intersect in $\C^2$.
Then taking a regular neighborhood of $w_G$, we have immersions 
\[\hat{g}_G \colon X(G) \imm \C^2 \quad \text{and} \quad g_G = \hat{g}_G|_{M(G)} \colon M(G) \imm \C^2.\]
We now induce a complex structure $J$ on $X(G)$ by pulling-back the standard complex structure $J_0$ on $\C^2$.
We also define an almost contact structure $\xi'_\mathrm{can}$ on $M(G)$ by
\[\xi'_\mathrm{can} = TM(G) \cap J(TM(G))\]
(we endow $\xi'_\mathrm{can}$ with the orientation as a complex line field).
Then the immersion $\hat{g}_G$ is holomorphic, and hence it holds that $dg_G(\xi'_\mathrm{can}) \subset T\C^2$ is complex everywhere.

\subsubsection{Identification}\label{subsubsection:identification}

We claim that {\it there exists a diffeomorphism between $M(G)$ and $K$ such that $\xi'_\mathrm{can}$ and $\xi_\mathrm{can}$ are homotopic as almost contact structures via the diffeomorphism}. 
Let us show it as follows.
Consider the complex structure on the Milnor fiber $F$ as a complex submanifold of $\C^3$. Then we can find a spine of $F$, which is the transverse union $S^2 \cup \dots \cup S^2$ associated with $G$ as above, and all 2-spheres are totally real in $F$.
We take an embedding
\[\iota \colon X(G) \into F\]
such that $\iota$ identifies the spines of $X(G)$ and $F$, and $F$ deformation retracts to $\iota(X(G))$ by an isotopy.
Since both complex structures on $X(G)$ and $F$ have their spines as totally real, they are homotopic as almost complex structures.
Moreover, along the isotopy, almost complex structures and hence almost contact structures are deformed.
Consequently, we obtain an isotopy between the boundaries $(\iota(M(G)), d\iota(\xi'_\mathrm{can}))$ and $(K, \xi_\mathrm{can})$ such that $d\iota(\xi'_\mathrm{can})$ and $\xi_\mathrm{can}$ are homotopic as almost contact structures via the isotopy.
This concludes the claim, and we identify them as almost contact manifolds.

\section{General properties of the Wu invariant}\label{section:Wuinv}

As the preparation to show our main results, we state geometric properties of the Wu invariant.
Throughout this section, let $(M^3; \xi, \omega)$ denote an arbitrary closed cooriented almost contact 3-manifold.
We also introduce the following conventions.
Let $e_i$ denote the $i$-th standard vector of $\R^5$.

\begin{convention}\label{convSO2}
We embed $\SO(2)$ into $\SO(5)$ in the lowest diagonal position, i.e.,
\[\{E_3\} \times \SO(2) \subset \SO(5).\]
We also identify $\SO(5)/\SO(2)$ with $V_{5,3}$ via the induced map from the action of $\SO(5)$ on $V_{5,3}$ at $\begin{bmatrix}
e_1 & e_2 & e_3 
\end{bmatrix}$:
\[\SO(5) \to V_{5,3}, \quad 
\begin{bmatrix}
v_1 & v_2 & v_3 & v_4 & v_5 
\end{bmatrix}
\mapsto
\begin{bmatrix}
v_1 & v_2 & v_3
\end{bmatrix}.\]
\end{convention}

In addition, let $(\epsilon_1, \dots, \epsilon_5)$ be the basis of $(\R^5)^*$ dual to $(e_1, \dots, e_5)$, and let $(\hy)^* \colon \R^5 \to (\R^5)^*$ denote the linear isomorphism sending $e_i$ to $\epsilon_i$.

\begin{convention}\label{convU2}
We embed $\U(2)$ into $\SO(5)$ in the lower diagonal position, i.e.,
\[\{1\} \times \U(2) \subset \SO(5).\]
Moreover, considering the standard orientation on $\R^5$, we identify $\SO(5)/\U(2)$ with $\mathcal{C}_+(\R^5)$ (up to homotopy equivalence) via the induced map from the action of $\SO(5)$ on $\mathcal{C}_+(\R^5)$ at $(\Ker \epsilon_1, \epsilon_2 \wedge \epsilon_3 + \epsilon_4 \wedge \epsilon_5)$:
\[\SO(5) \to \mathcal{C}_+(\R^5), \quad 
\begin{bmatrix}
v_1 & v_2 & v_3 & v_4 & v_5 
\end{bmatrix}
\mapsto
(H, \Omega) = (\Ker (v_1^*), v_2^* \wedge v_3^* + v_4^* \wedge v_5^*),\]
where $H$ is cooriented in the $v_1$-direction.
\end{convention}

\subsection{Wu invariant and almost contact embeddings}\label{subsection:Wuinv-ctct-emb}

In this subsection, we show the following. The motivative example is that in \S \ref{subsubsection:imm-incl}.

\begin{proposition}\label{Wuinv_ctct_emb}
{\em
Let $(\R^5; \eta, \varpi)$ be 5-space endowed with an almost contact structure and 
\[f \colon (M^3; \xi, \omega) \into (\R^5; \eta, \varpi)\]
an almost contact embedding.
Then the Wu invariant $c_\tau(f) \in H^2(M^3; \Z)$ vanishes for any almost contact parallelization $\tau$ of $(M^3; \xi, \omega)$.
}
\end{proposition}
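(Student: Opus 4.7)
The plan is to exploit the contractibility of $\R^5$ to replace the target $V_{5,3}$ of $\phi_{\tau,f}$ with a much smaller subspace on which $\Sigma$ pulls back to zero. First, $c_\tau(f) = \phi_{\tau,f}^*\Sigma$ does not depend on which parallelization of $T\R^5$ is used to identify $T_{f(x)}\R^5$ with $\R^5$, nor on the particular orthonormalization procedure: different choices produce homotopic maps $M^3 \to V_{5,3}$ (for parallelizations, because $\R^5$ is contractible; for orthonormalizations, because the space of linearly independent $3$-tuples deformation-retracts onto $V_{5,3}$). Hence I am free to choose a parallelization $\tilde{\tau} \colon T\R^5 \xto{\cong} \R^5 \times \R^5$ adapted to the ambient almost contact structure: sending $\eta_p$ to the ``bottom four'' coordinate hyperplane $\{0\} \times \R^4 \subset \R^5$ and $\varpi_p$ to the standard symplectic form $\omega_0 = \epsilon_2 \wedge \epsilon_3 + \epsilon_4 \wedge \epsilon_5$ (in the sense of Convention \ref{convU2}). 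Such $\tilde{\tau}$ exists because the $\U(2)$-reduction of the frame bundle of $T\R^5$ defined by $(\eta, \varpi)$ is automatically trivial over the contractible base $\R^5$.

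With this $\tilde{\tau}$ and an almost contact parallelization $\tau$ of $(M^3; \xi, \omega)$, the almost contact embedding condition forces $df_x(e_2)$ and $df_x(e_3)$ to lie in $\R^4$ and span a positive symplectic $2$-plane with respect to $\omega_0$ (using Definition \ref{dfn:al-ctct-paral}(ii) together with the compatibility of orientations built into the definition of an almost contact embedding), while $df_x(e_1)$ lies outside $\R^4$. Performing Gram--Schmidt in the order $(df_x(e_2), df_x(e_3), df_x(e_1))$ preserves these properties, so $\phi_{\tau,f}$ is represented (up to homotopy) by a map factoring through the open subspace
\[V' := \bigl\{(u_1, u_2, u_3) \in V_{5,3} : u_2, u_3 \in \R^4,\ \omega_0(u_2, u_3) > 0,\ u_1 \notin \R^4 \bigr\}.\]

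It then suffices to show that each connected component of $V'$ is homotopy equivalent to $S^3$, since this forces $H^2(V'; \Z) = 0$ and hence $c_\tau(f) = \phi_{\tau,f}^* \Sigma = 0$. The projection $V' \to W := \{(u_2, u_3) \in V_{4,2} : \omega_0(u_2, u_3) > 0\}$, $(u_1, u_2, u_3) \mapsto (u_2, u_3)$, is a fibration whose fibers consist of unit vectors in the $3$-dimensional orthogonal complement of $\mathrm{span}(u_2, u_3) \subset \R^5$ that are not in $\R^4$---that is, two open hemispheres of a $2$-sphere, each contractible. Hence each component of $V'$ is homotopy equivalent to $W$. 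Finally, $W$ deformation retracts onto the set of ``complex'' frames $\{(u_2, J_0 u_2) : u_2 \in S^3 \subset \R^4\} \cong S^3$ by normalizing the path $t \mapsto (u_2,\ (1-t)u_3 + t J_0 u_2)$; the hypothesis $\omega_0(u_2, u_3) = \langle J_0 u_2, u_3\rangle > 0$ ensures that the interpolant never vanishes and remains orthogonal to $u_2$.

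The main obstacle is the geometric bookkeeping: simultaneously arranging the two parallelizations so that the factorization through $V'$ follows directly from the almost contact embedding condition. Once this reduction is in place, the homotopy type of $V'$ and the vanishing of $c_\tau(f)$ follow by routine arguments.
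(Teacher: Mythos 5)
Your proof is correct, and it reaches the conclusion by a route that is recognizably different in its mechanics from the paper's, even though both ultimately rest on the same two inputs: the contractibility of $\R^5$ and the compatibility of the frame $(df(e_1),df(e_2),df(e_3))$ with the ambient almost contact structure. The paper (Lemma \ref{Wuinv_expA}) first descends along $\pi\colon V_{5,3}\to\SO(5)/\U(2)$, shows via the Gysin sequence that $\pi^*$ is an isomorphism on $H^2$, and then observes that $\pi\circ\phi_{\tau,f}$ factors through the classifying map $\R^5\to\mathcal{C}_+(\R^5)$ of $(\eta,\varpi)$ precomposed with $f$, hence is null-homotopic. You instead use the contractibility of $\R^5$ upstream, to put $(\eta,\varpi)$ into standard position by an adapted trivialization of $T\R^5$ (which, as you correctly note, changes $\phi_{\tau,f}$ only by a homotopy), and then show directly that $\phi_{\tau,f}$ lands in the explicit open set $V'\subset V_{5,3}$, whose components you identify with $S^3$ by hand; this $S^3$ is of course the fiber $\U(2)/\SO(2)$ of the paper's $\pi$, so the two arguments are seeing the same geometry. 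What the paper's version buys is brevity and reusability (the descent to $\SO(5)/\U(2)$ is set up once and the null-homotopy is immediate); what yours buys is self-containedness --- you never need the homogeneous space $\SO(5)/\U(2)$ or its cohomology, only elementary deformation retractions. Two small points to tidy: the adapted frame field should be taken positively oriented so that the transition function to the standard trivialization lies in $\SO(5)$ and is null-homotopic (automatic for a positive symplectic hyperplane field, per Convention \ref{convU2}); and the sign condition $\omega_0(u_2,u_3)>0$ depends on the orientation convention in the definition of almost contact embedding, but this is harmless, since the mirror-image component of frames with $\omega_0(u_2,u_3)<0$ retracts onto $\{(u_2,-J_0u_2)\}\cong S^3$ by the same interpolation.
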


Before the proof, here are three remarks.

{\bf (a)} The triviality of $\xi$ as a 2-plane field is ensured as follows. 
In general, for an almost contact manifold $(M^{2n-1}; \xi, \omega)$, the hyperplane field $\xi$ admits a complex structure compatible with the symplectic structure $\omega$, which is unique up to homotopy.
Hence its first Chern class $c_1(\xi)$ makes sense.
Then the following is known:

\begin{theorem}[{\cite[Theorem 1.3]{Kas16}}]\label{Kas1.3}
{\it 
If a closed contact manifold $(M^{2n-1}, \xi)$ is a contact submanifold of a cooriented contact manifold $(N^{2n+1}, \eta)$ such that $H^2(N^{2n+1}; \Z) = 0$, then $c_1(\xi) = 0$.
}
\end{theorem}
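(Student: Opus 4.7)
The plan is to use the contact submanifold condition to split $\eta|_M$ as the direct sum of $\xi$ with a symplectic normal line bundle, reducing the problem to showing that the Euler class of the topological normal bundle of $M$ in $N$ vanishes. First, since $(N, \eta)$ is cooriented contact, $\eta$ is a symplectic vector bundle up to conformal equivalence and admits a compatible complex structure unique up to homotopy, so $c_1(\eta) \in H^2(N;\Z)$ is well-defined and vanishes by hypothesis. Pulling back by $i \colon M \into N$, I have $c_1(\eta|_M) = 0$.

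Next, the contact submanifold condition $TM \cap \eta|_M = \xi$ yields the symplectic orthogonal splitting
\[\eta|_M = \xi \oplus \nu_\eta,\]
where $\nu_\eta$ is a complex line bundle on $M$. Whitney additivity gives $c_1(\xi) = -c_1(\nu_\eta)$, so it remains to compute $c_1(\nu_\eta)$. The key observation is that $\nu_\eta$ is isomorphic to the topological normal bundle $\nu_{M/N}$ as an oriented real rank-$2$ bundle: the composition $\eta|_M \into TN|_M \onto \nu_{M/N}$ has kernel $\eta|_M \cap TM = \xi$, inducing an injection $\nu_\eta \cong \eta|_M / \xi \into \nu_{M/N}$ between bundles of equal rank, which is therefore an isomorphism. (Orientation compatibility can be checked by comparing the contact volume forms $\alpha_M \wedge (d\alpha_M)^{n-1}$ and $\alpha_N \wedge (d\alpha_N)^n$ through the short exact sequence.)

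It then suffices to show that $e(\nu_{M/N}) = c_1(\nu_{M/N}) = 0$. When $N$ is closed, the standard identity $e(\nu_{M/N}) = i^*\mathrm{PD}[M] \in H^2(M;\Z)$, together with $\mathrm{PD}[M] \in H^2(N;\Z) = 0$, yields the vanishing and hence $c_1(\xi) = 0$. The main obstacle I expect is the case of noncompact $N$ (e.g.\ $N = \R^{2n+1}$), where Poincaré duality does not apply directly. There I would argue via the Thom isomorphism on a tubular neighborhood of $M$ in $N$ and the long exact sequence of the pair $(N, N \setminus M)$, reading off $e(\nu_{M/N})$ as the restriction to the zero section of the Thom class and using $H^2(N;\Z) = 0$ (combined with control of $H^\ast(N \setminus M)$ from the associated sphere bundle) to force the Euler class to vanish.
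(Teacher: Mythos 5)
Your argument is correct, and it is essentially the standard proof (the one given in Kasuya's original paper): split $\eta|_M$ as $\xi$ plus the conformal symplectic normal bundle, identify the latter with the topological normal bundle $\nu_{M/N}$, and kill its Euler class using $H^2(N;\Z)=0$; note that the present paper does not reprove this statement but simply quotes it from \cite{Kas16}. The only superfluous step is the worry about $H^*(N\setminus M)$ in the noncompact case: the Euler class is the image of the Thom class under the composite $H^2(N,N\setminus M)\to H^2(N)\to H^2(M)$, so the vanishing of the middle group already forces $e(\nu_{M/N})=0$ with no further control needed.
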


\noindent{Notice that Theorem \ref{Kas1.3} holds also for almost contact manifolds.}
Applying this result to the case where $n = 2$, we have the triviality of our $\xi$ as a 2-plane field.

{\bf (b)} Since the map $f \colon M^3 \into \R^5$ is an embedding, the normal Euler class of $f$ vanishes \cite[Theorem 11.3]{MS74}. 
Hence the Wu invariant of $f$ is a 2-torsion for any parallelization.
However, it does not imply the Wu invariant itself vanishes.

{\bf (c)} Theorem \ref{Wuinv_ctct_emb} is related to the following result:

\begin{theorem}[{\cite[Theorem 1.5]{Kas16}}]\label{Kas1.5}
{\it
Let $(M^3, \xi)$ be a closed cooriented contact 3-manifold with $c_1(\xi) = 0$. 
Then there is a contact structure $\eta$ on $\R^5$ such that we can embed $(M^3, \xi)$ in $(\R^5, \eta)$ as a contact submanifold.
}
\end{theorem}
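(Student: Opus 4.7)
The plan is to factor $\phi_{\tau,f}$ (up to homotopy) through a composition that is null-homotopic by contractibility of $\R^5$. Conventions \ref{convSO2} and \ref{convU2} provide inclusions $\SO(2) \subset \U(2) \subset \SO(5)$, so there is a natural quotient map $q \colon V_{5,3} = \SO(5)/\SO(2) \to \SO(5)/\U(2) \simeq \mathcal{C}_+(\R^5)$ (the last equivalence from Proposition \ref{moduli}). I would first establish that $q^*$ is an isomorphism on $H^2$. By Convention \ref{convU2}, the inclusion $\SO(2) \hookrightarrow \U(2)$ corresponds to $e^{i\theta} \mapsto \mathrm{diag}(1, e^{\pm i\theta})$, so its composition with $\det \colon \U(2) \to U(1)$ has degree $\pm 1$; thus $\pi_1(\SO(2)) \to \pi_1(\U(2))$ is an isomorphism, and the long exact sequence of $\SO(2) \to \U(2) \to \U(2)/\SO(2)$ shows the fiber $\U(2)/\SO(2)$ is $2$-connected. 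The Serre spectral sequence for $\U(2)/\SO(2) \to V_{5,3} \to \SO(5)/\U(2)$ then yields $q^* \colon H^2(\SO(5)/\U(2); \Z) \to H^2(V_{5,3}; \Z)$ is an isomorphism; in particular $q^* x = \pm \Sigma$ for the natural generator $x$ of $H^2(\SO(5)/\U(2); \Z)$.

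The main step is to show $q \circ \phi_{\tau,f}$ is null-homotopic. I would introduce the auxiliary map $\tilde\phi \colon M^3 \to \mathcal{C}_+(\R^5)$, $x \mapsto (\eta_{f(x)}, \varpi_{f(x)})$, which factors through $\R^5$ (via $f$) and is null-homotopic by contractibility. It then suffices to produce a homotopy $q \circ \phi_{\tau,f} \simeq \tilde\phi$. At each $x \in M^3$, the almost contact embedding condition yields $df(e_2), df(e_3) \in \eta_{f(x)}$ spanning the symplectic subspace $df(\xi_x)$ compatibly with $\varpi$, while the almost contact parallelization ensures $df(e_1)$ is transverse to $\eta_{f(x)}$ with matching coorientation (a globally consistent sign choice, by connectedness of each component of $M^3$). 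Within the contractible space of linearly independent $3$-tuples satisfying these compatibility constraints, I would interpolate $df(e_1)$ to the unit normal of $\eta_{f(x)}$ in the coorientation direction; Gram--Schmidt then produces a continuous deformation of $\phi_{\tau,f}$ whose image under $q$ has underlying hyperplane $\eta_{f(x)}$. A further deformation within the contractible space of symplectic complements of $df(\xi_x) \subset \eta_{f(x)}$ connects the resulting symplectic form to $\varpi_{f(x)}$, completing the desired homotopy.

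Combining the two steps,
\[c_\tau(f) \;=\; \phi_{\tau,f}^* \Sigma \;=\; \pm\,\phi_{\tau,f}^* q^* x \;=\; \pm\,(q \circ \phi_{\tau,f})^* x \;=\; \pm\,\tilde\phi^* x \;=\; 0,\]
the last equality because $\tilde\phi^* x$ factors through $H^2(\R^5) = 0$.

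The principal obstacle is the geometric homotopy $q \circ \phi_{\tau,f} \simeq \tilde\phi$: one needs a globally coherent choice of coorientation sign (traceable to the almost contact parallelization axiom and connectedness), as well as a continuous interpolation of symplectic forms over $M^3$, which rests on the contractibility of the moduli of symplectic complements of a fixed symplectic $2$-plane inside a symplectic $4$-space.
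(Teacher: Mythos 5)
There is a fundamental mismatch: what you have written is not a proof of Theorem \ref{Kas1.5} at all, but rather a proof of Proposition \ref{Wuinv_ctct_emb}. Theorem \ref{Kas1.5} is an \emph{existence} statement: given a closed cooriented contact 3-manifold $(M^3,\xi)$ with $c_1(\xi)=0$, one must \emph{construct} a contact structure $\eta$ on $\R^5$ and a contact embedding of $(M^3,\xi)$ into $(\R^5,\eta)$. Your argument instead \emph{assumes} an almost contact embedding $f \colon (M^3;\xi,\omega) \into (\R^5;\eta,\varpi)$ is already given and concludes that its Wu invariant $c_\tau(f)$ vanishes. Nothing in your proposal produces the embedding or the ambient contact structure, so the conclusion you reach ($c_\tau(f)=0$) is simply not the conclusion of the theorem in question. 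The actual proof of Theorem \ref{Kas1.5} lives in Kasuya's paper \cite{Kas16} (the present paper only cites it); it proceeds by using $c_1(\xi)=0$ to trivialize $\xi$ as an oriented 2-plane bundle over the 3-complex $M^3$ and then explicitly building an embedding into $\R^5$ together with a contact form on a neighbourhood extending to all of $\R^5$. None of that machinery appears in your proposal.

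That said, as a blind proof of Proposition \ref{Wuinv_ctct_emb} your argument is essentially the paper's own: the factorization of $\phi_{\tau,f}$ through the quotient $\pi\colon V_{5,3}\to\SO(5)/\U(2)$, the verification that $\pi^*$ is an isomorphism on $H^2$ (the paper uses the Gysin sequence of the $S^3$-bundle $\U(2)/\SO(2)\simeq S^3$ rather than a Serre spectral sequence, but this is cosmetic), and the commutativity up to homotopy of the square involving $(\eta,\varpi)\colon\R^5\to\mathcal{C}_+(\R^5)$ via Conventions \ref{convSO2} and \ref{convU2}, followed by null-homotopy from contractibility of $\R^5$. If your intent was to prove that proposition, the argument is sound and matches Lemma \ref{Wuinv_expA} and the subsequent proof; but it does not discharge Theorem \ref{Kas1.5}.
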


\noindent{In the proof of Theorem \ref{Kas1.5}, such an embedding was constructed so that its Wu invariant vanishes for an almost contact parallelization.}\\

The key to the proof of Theorem \ref{Wuinv_ctct_emb} is to consider the quotient map
\[\pi \colon V_{5,3} \cong \SO(5)/\SO(2) \to \SO(5)/\U(2),\]
and the composition
\[\psi_{\tau, f} \coloneqq \pi \circ \phi_{\tau, f} \colon M^3 \to \SO(5)/\U(2).\]
We obtain a new expression of the Wu invariant as follows.
Recall that $\Sigma \in H^2(V_{5,3}; \Z)$ is the generator chosen in Definition \ref{Wuinv}.

\begin{lemma}\label{Wuinv_expA}
{\em
Let $f \colon M^3 \imm \R^5$ be an immersion and $\tau$ a parallelization of $M^3$.
Then the Wu invariant $c_\tau(f)$ is equal to the class
\[(\psi_{\tau, f})^*(\Sigma') \in H^2(M^3; \Z),\]
where $\Sigma'$ is the generator of $H^2(\SO(5)/\U(2); \Z) \cong \Z$ chosen so that $\pi^*(\Sigma') = \Sigma$.
}
\end{lemma}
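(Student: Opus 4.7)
The plan is to reduce the lemma entirely to a cohomological statement about the homogeneous space $\SO(5)/\U(2)$. The first observation is that by naturality, since $\psi_{\tau,f} = \pi \circ \phi_{\tau,f}$ by definition, we immediately obtain
\[
(\psi_{\tau,f})^*(\Sigma') = (\phi_{\tau,f})^*\bigl(\pi^*(\Sigma')\bigr) = (\phi_{\tau,f})^*(\Sigma) = c_\tau(f).
\]
Thus the entire content of the lemma is that the quotient map
\[
\pi \colon \SO(5)/\SO(2) \longrightarrow \SO(5)/\U(2)
\]
induces an isomorphism $\pi^* \colon H^2(\SO(5)/\U(2);\Z) \xrightarrow{\cong} H^2(V_{5,3};\Z) \cong \Z$, which in turn guarantees that $\Sigma'$ exists as the unique preimage of $\Sigma$ and is itself a generator.

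To establish this isomorphism, the first step would be to identify the fiber of $\pi$, namely $\U(2)/\SO(2)$, with the $3$-sphere. Using Conventions \ref{convSO2} and \ref{convU2}, the inclusion $\SO(2) \hookrightarrow \U(2)$ is given by $\theta \mapsto \mathrm{diag}(1, e^{i\theta})$, which is a section of the determinant homomorphism $\det \colon \U(2) \to \U(1)$. Since $\SU(2) \cap \SO(2) = \{1\}$ and $\dim \SU(2) + \dim \SO(2) = \dim \U(2)$, it follows that $\U(2)/\SO(2) \cong \SU(2) \cong S^3$.

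Then I would apply the Serre spectral sequence of the fibration
\[
S^3 \longrightarrow \SO(5)/\SO(2) \xrightarrow{\,\pi\,} \SO(5)/\U(2).
\]
Since $H^1(S^3;\Z) = H^2(S^3;\Z) = 0$, the rows $q = 1, 2$ of the $E_2$-page vanish identically, so no nontrivial differential can enter or leave the entry $E_2^{2,0} = H^2(\SO(5)/\U(2);\Z)$, and the edge homomorphism $\pi^*$ is an isomorphism in degree two. The only mildly subtle step is pinning down the fiber $\U(2)/\SO(2) \cong S^3$ from the embedding conventions; after that, the argument is essentially formal, so I do not anticipate any serious obstacle.
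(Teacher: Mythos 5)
Your proposal is correct and takes essentially the same route as the paper: both reduce the lemma, via naturality of $\psi_{\tau,f}=\pi\circ\phi_{\tau,f}$, to showing that $\pi^*\colon H^2(\SO(5)/\U(2);\Z)\to H^2(V_{5,3};\Z)$ is an isomorphism, and both deduce this from the $S^3$-bundle structure of $\pi$ (the paper via the Gysin sequence, you via the Serre spectral sequence — the same computation). Your explicit verification that the fiber $\U(2)/\SO(2)$ is $\SU(2)\cong S^3$ is a detail the paper leaves implicit, and it is carried out correctly.
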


\begin{proof}
It suffices to show that the induced map
\[\pi^* \colon H^2(\SO(5)/\U(2); \Z) \to H^2(\SO(5)/\SO(2); \Z)\]
is an isomorphism.
This can be shown by applying the Gysin exact sequence to the orientable $S^3$-bundle $\pi \colon \SO(5)/\SO(2) \to \SO(5)/\U(2)$.
\end{proof}

\begin{remark}
There is a diffeomorphism $\SO(5)/\U(2) \cong \C P^3$ \cite[\S 8.1]{Gei08}.
Under this identification, the generator $\Sigma'$ coincides with the Poincar\'e dual of $\C P^2$ up to sign.
\end{remark}

\begin{proof}[Proof of Proposition \ref{Wuinv_ctct_emb}]
Let us consider the map 
\[\phi_{\tau, f} = \begin{bmatrix} v_1 & v_2 & v_3 \end{bmatrix} \colon M^3 \to V_{5,3} \cong \SO(5)/\SO(2)\]
induced from the differential of $f$ and the parallelization $\tau$, where $v_i(x) = df_x(e_i)$ for $i = 1, 2, 3$.
Here notice that $df(\xi)$ is spanned by $v_2$ and $v_3$, and has the orientation $v_2^* \wedge v_3^*$.
We also choose the coorientation of $\eta$ so that it is compatible to that of $\xi$.
Then the vector field $v_1$ gives the coorientation of $\eta$ on $f(M^3)$.
Moreover, according to Proposition \ref{moduli}, the almost contact structure $(\eta, \varpi)$ is regarded as a smooth map
\[(\eta, \varpi) \colon \R^5 \to \mathcal{C}_+(\R^5).\]

We now consider the quotient map $\pi \colon \SO(5)/\SO(2) \to \SO(5)/\U(2)$ and the composition
\[\psi_{\tau, f} \coloneqq \pi \circ \phi_{\tau, f} \colon M^3 \to \SO(5)/\U(2).\]
By Lemma \ref{Wuinv_expA}, it suffices to show that the map $\psi_{\tau, f}$ is null-homotopic. More precisely, the lower left triangle in the following diagram commutes, where the maps $^*1$ and $^*2$ are the identification fixed in Conventions \ref{convSO2} and \ref{convU2}.
\begin{center}
\begin{tikzcd}
M^3 \arrow[d, "f"] \arrow[rrd, "{\psi_{\tau, f}}" description] \arrow[r, "{\phi_{\tau, f}}"] & {V_{5,3}} \arrow[r, "^*1"]             & \SO(5)/\SO(2) \arrow[d, "\pi"] \\
\R^5 \arrow[r, "{(\eta, \varpi)}"']                                               & \mathcal{C}_+(\R^5) \arrow[r, "^*2"'] & \SO(5)/\U(2)                  
\end{tikzcd}
\end{center}
Since the map $f$ is an almost contact embedding, the distribution $df(\xi) \subset (\eta, \varpi)|_{f(M^3)}$ forms a symplectic subspace field, and the 2-frame $(v_2, v_3)$ is homotopic to a symplectic basis field of $(df(\xi), \varpi|_{df(\xi)})$ (recall Remark \ref{3-dim}). 
Then for each $x \in M^3$, by the extension of symplectic basis, the symplectic structure $\varpi_{f(x)}$ on $\eta_{f(x)}$ has the form
\[v_2(x)^* \wedge v_3(x)^* + v_4(x)^* \wedge v_5(x)^*\]
up to homotopy, where $v_4(x)$ and $v_5(x)$ are appropriate vectors.
Then we have that the restriction $(\eta, \varpi)|_{f(M^3)}$ has the representative
\[\begin{bmatrix} v_1 & v_2 & v_3 & v_4 & v_5 \end{bmatrix}\]
in the sense of Convention \ref{convU2}, up to homotopy.
By the form of $\phi_{\tau, f}$ and Convention \ref{convSO2}, we find the commutativity of the lower left triangle.
This completes the proof.
\end{proof}

\subsection{Wu invariant and complex tangency}\label{subsection:Wuinv-cpx-tan}

In this subsection, we show the following. The motivative example is introduced in \S \ref{subsubsection:imm-Kinjo} and \S \ref{subsection:reconstruction-identification}.
Let $J_0$ be the standard complex structure on $\R^4 = \C^2$:
\[
J_0 \coloneqq 
\begin{bmatrix}
0 & -1 & 0 & 0 \\
1 & 0 & 0 & 0 \\
0 & 0 & 0 & -1 \\
0 & 0 & 1 & 0
\end{bmatrix}.
\]
Let $j \colon \R^4 \into \R^5, \ (x_1, x_2, x_3, x_4) \mapsto (x_1, x_2, x_3, x_4, 0)$ be the standard inclusion.

\begin{proposition}\label{Wuinv_hol_imm}
{\em 
Assume that $\xi$ is trivial as a 2-plane field.
Let 
\[g \colon (M^3; \xi, \omega) \imm (\C^2, J_0)\]
be an immersion satisfying that $dg(\xi) \subset T\C^2$ forms a complex line everywhere.
Then the Wu invariant $c_\tau(j \circ g) \in H^2(M^3; \Z)$ vanishes for any almost contact parallelization $\tau$ of $(M^3; \xi, \omega)$.
}
\end{proposition}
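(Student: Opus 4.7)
The plan is to run the same strategy as in Proposition~\ref{Wuinv_ctct_emb}: by Lemma~\ref{Wuinv_expA}, it suffices to show $\psi_{\tau, j \circ g}^*(\Sigma') = 0$, where $\psi_{\tau, j \circ g} = \pi \circ \phi_{\tau, j \circ g} \colon M^3 \to \SO(5)/\U(2)$. However, the previous proof's device---factoring through $\R^5$ by exhibiting an ambient almost contact structure---is unavailable here: the image of $j \circ g$ lies in $\R^4 \times \{0\}$, so $d(j \circ g)(TM^3)$ is entirely tangent to $\R^4$ and $j \circ g$ cannot be an almost contact embedding into any almost contact $(\R^5; \eta, \varpi)$. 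Instead, I would factor $\phi_{\tau, j \circ g}$ through a subspace of $V_{5,3}$ with vanishing second cohomology, using the complex tangency condition on $g$.

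More concretely, writing $v_i = d(j \circ g)(e_i)$ for the parallelization basis, each $v_i(x)$ lies in $\R^4$, and the hypothesis that $dg(\xi)$ is a complex line ensures $\mathrm{span}(v_2, v_3) \subset \R^4 = \C^2$ is $J_0$-invariant. I would orthonormalize in the order $(v_2, v_3, v_1)$---first producing an orthonormal basis of $\mathrm{span}(v_2, v_3)$ and then projecting $v_1$ off it---so that the complex line $\mathrm{span}(v_2, v_3)$ is preserved; this procedure is homotopic to the standard one and so leaves $\phi_{\tau, j \circ g}$ in the same homotopy class. Then $v_3 = \pm J_0 v_2$, the sign being determined by whether the $\omega$-orientation on $\xi$ agrees with the complex orientation of $dg(\xi)$ (in the situation of \S\ref{subsection:reconstruction-identification}, where $\xi'_\mathrm{can}$ is oriented as a complex line field, the positive sign holds), and the relations $v_1 \perp v_2$ and $v_1 \perp J_0 v_2$ state precisely that $v_1$ is Hermitian-perpendicular to $v_2$ in $\C^2$. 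Hence $(v_1, v_2)$ is a unitary 2-frame, and $\phi_{\tau, j \circ g}$ factors through the subspace
\[
\mathcal{U} \coloneqq \{(v_1, v_2, \pm J_0 v_2) \in V_{5,3} : (v_1, v_2) \in \U(2)\} \subset V_{5,3},
\]
which is homeomorphic to $\U(2) \sqcup \U(2)$.

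To conclude, I would invoke $\U(2) \simeq S^1 \times S^3$, which gives $H^2(\U(2); \Z) = 0$, and hence $H^2(\mathcal{U}; \Z) = 0$. Consequently the composition $\mathcal{U} \into V_{5,3} \xto{\pi} \SO(5)/\U(2)$ induces the zero map on $H^2$, so $\psi_{\tau, j \circ g}^*(\Sigma') = 0$, and the Wu invariant vanishes by Lemma~\ref{Wuinv_expA}. The hard part will be technical bookkeeping: justifying that the reordered orthonormalization truly preserves the complex-line condition and remains homotopic to the standard one, and verifying on the nose (not merely up to homotopy) that $\psi_{\tau, j \circ g}$ factors through the claimed map $\mathcal{U} \to \SO(5)/\U(2)$ with the identifications fixed in Conventions~\ref{convSO2} and \ref{convU2}. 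No essentially new ideas beyond those in the proof of Proposition~\ref{Wuinv_ctct_emb} should be required.
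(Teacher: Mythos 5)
Your proof is correct, but it takes a genuinely different route from the paper's. The paper does not argue inside $V_{5,3}$: it uses the fact that the fifth column of $\phi_{\tau, j\circ g}$ is constantly $e_5$ to lift the frame map to $\Phi_{\tau,g} = \begin{bmatrix} v_1 & v_2 & v_3 & v_4 \end{bmatrix} \colon M^3 \to V_{4,3} \cong \SO(4)$, where $v_4$ is the normal field of $g$ in $\R^4$, proves a separate lemma (Lemma \ref{Wuinv_expB}, via two Gysin sequences) identifying $c_\tau(j\circ g)$ with $(\Phi_{\tau,g})^*[\U(2)]^*$, the pullback of the Poincar\'e dual of $\U(2) \subset \SO(4)$, and then shows that $\Phi_{\tau,g}$ \emph{misses} $\U(2)$: if $\Phi_{\tau,g}(x) \in \U(2)$ then $v_4(x) = J_0 v_3(x)$, so $\mathrm{span}(v_3, v_4)$ would be the complex line through $v_3$, i.e.\ $dg(\xi)$ itself, contradicting $v_4 \perp dg(TM^3)$. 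You instead stay in $V_{5,3}$ and show that the tangential frame \emph{lands in} a subspace $\mathcal{U} \cong \U(2) \sqcup \U(2)$ with $H^2(\mathcal{U};\Z) = 0$. Both arguments hinge on the same complex tangency of $dg(\xi)$, but yours is more self-contained: it needs only the definition $c_\tau = \phi_{\tau,j\circ g}^*(\Sigma)$ together with $H^2(\U(2);\Z)=0$, and in fact you do not even need Lemma \ref{Wuinv_expA} --- once $\phi_{\tau, j\circ g}$ factors through $\mathcal{U}$ you get $\phi_{\tau,j\circ g}^*(\Sigma)=0$ directly --- whereas the paper's route requires the Poincar\'e-dual computation of Lemma \ref{Wuinv_expB}. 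The bookkeeping you flag is harmless: both orthonormalizations are homotopic to $A_{\tau, j\circ g}$ inside the space of linearly independent $3$-frames, which deformation retracts onto $V_{5,3}$, so the two resulting maps to $V_{5,3}$ are homotopic; and the sign ambiguity in $v_3 = \pm J_0 v_2$ is immaterial since both components of $\mathcal{U}$ have vanishing $H^2$.
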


To prove this proposition, we consider the map
\[\phi_{\tau, j \circ g} \colon M^3 \to \SO(5)/\SO(2),\]
and an additional map
\[\Phi_{\tau, g} \colon M^3 \to V_{4,3} \cong \SO(4),\] 
which is induced from the differential of $g$ and the parallelization $\tau$.
Since the 5th column of the map $\phi_{\tau, j \circ g}$ is constantly $e_5$, 
the relationship between these maps is as follows.
\begin{center}
\begin{tikzcd}
\SO(4) \arrow[r, "\iota", hook]                                          & \SO(5) \arrow[d, "\rho"] \\
M^3 \arrow[r, "{\phi_{\tau, j \circ g}}"'] \arrow[u, "{\Phi_{\tau, g}}"] & \SO(5)/\SO(2)           
\end{tikzcd}
\end{center}
Here $\iota \colon \SO(4) \into \SO(5)$ is the embedding in the {\it upper} diagonal position and $\rho \colon \SO(5) \to \SO(5)/\SO(2)$ is the quotient map (recall Convention \ref{convSO2}).
Then we obtain another expression of the Wu invariant as follows.
Consider the usual embedding of $\U(2)$ into $\SO(4)$ defined by $\U(2) = \{A \in \SO(4) \mid J_0 A = A J_0\}$.

\begin{lemma}\label{Wuinv_expB}
{\em
Let $g \colon M^3 \imm \C^2$ be an immersion and $\tau$ a parallelization of $M^3$.
Then the Wu invariant $c_\tau(j \circ g)$ is equal to
\[(\Phi_{\tau, g})^* [\U(2)]^* \in H^2(M^3; \Z),\]
where $[\U(2)]^*$ is the Poincar\'e dual of $\U(2) \subset \SO(4)$.
}
\end{lemma}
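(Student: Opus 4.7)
The plan is to reduce the stated identity to a universal one in $H^2(\SO(4); \Z)$, and then observe that both classes represent the unique non-zero element of $\Z/2$.

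By Definition \ref{Wuinv}, $c_\tau(j\circ g) = \phi_{\tau, j\circ g}^*\Sigma$. Since the image of $j \circ g$ lies in $\R^4 \times \{0\} \subset \R^5$, the Gram--Schmidt completion of its $3$-frame to a $5$-frame puts $e_5$ in the last slot; this is precisely the content of the commutative diagram displayed just before the lemma, so $\phi_{\tau, j \circ g} = \rho \circ \iota \circ \Phi_{\tau, g}$. Hence $c_\tau(j \circ g) = \Phi_{\tau, g}^*(\iota^*\rho^*\Sigma)$, and it suffices to prove the universal identity $\iota^*\rho^*\Sigma = [\U(2)]^*$ in $H^2(\SO(4); \Z)$. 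The target group is cyclic of order two: from $\SO(4) \cong S^3 \times \R P^3$ (Section \ref{subsection:S3R4}) and K\"unneth, $H^2(\SO(4); \Z) \cong \Z/2$. So it is enough to show both classes are non-zero.

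For the left-hand side, apply the Gysin sequence of the oriented $S^1$-bundle $\rho$, whose Euler class is $2\Sigma$:
\[H^0(V_{5,3}; \Z) \xrightarrow{\smile 2\Sigma} H^2(V_{5,3}; \Z) \xrightarrow{\rho^*} H^2(\SO(5); \Z) \to H^1(V_{5,3}; \Z) = 0,\]
using that $V_{5,3}$ is simply connected. Hence $\rho^*$ descends to an isomorphism $\Z/2 \xrightarrow{\cong} H^2(\SO(5); \Z)$, and $\rho^*\Sigma$ is its generator. Because $\SO(5)/\SO(4) \cong S^4$ is $3$-connected, $\iota$ induces an isomorphism on $H_1$ and, by universal coefficients, on $H^2(-; \Z) \cong \Z/2$. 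Therefore $\iota^*\rho^*\Sigma$ is the non-zero element.

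For the right-hand side, use the homogeneous fibration $\U(2) \hookrightarrow \SO(4) \xrightarrow{p} \SO(4)/\U(2) \cong S^2$. Since $\U(2) = p^{-1}(\mathrm{pt})$ in the sense of Poincar\'e duality, $[\U(2)]^* = p^*[\mathrm{pt}]^*$. In the integral Serre spectral sequence of this fibration, with $H^*(\U(2); \Z) = \Lambda_\Z[a, b]$, $|a|=1, |b|=3$, the only differential hitting $E_2^{2,0} = H^2(S^2) = \Z$ is $d_2 \colon E_2^{0, 1} = \Z \to E_2^{2, 0}$. The constraints $H^1(\SO(4); \Z) = 0$ and $H^2(\SO(4); \Z) = \Z/2$ force $d_2$ to be multiplication by $\pm 2$, so $p^*$ sends the generator of $H^2(S^2; \Z)$ to the generator of $E_\infty^{2,0} = H^2(\SO(4); \Z) = \Z/2$. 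Thus $[\U(2)]^*$ is non-zero as well, and both classes coincide with the unique non-zero element of $\Z/2$. The main technical step is the Gysin identification of $\rho^*\Sigma$ together with checking $\iota^*$ is an isomorphism on $H^2$; no orientation-sign issues arise because the whole comparison takes place in $\Z/2$.
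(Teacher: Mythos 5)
Your proof is correct and follows essentially the same route as the paper: reduce via the commutative diagram to the universal identity $(\rho\circ\iota)^*\Sigma=[\U(2)]^*$ in $H^2(\SO(4);\Z)\cong\Z_2$, then check that both classes are non-zero, using the Gysin sequence of $\rho$ and the fact that $\iota$ is $3$-connected. The only cosmetic difference is that you establish the non-triviality of $[\U(2)]^*$ via the Serre spectral sequence of $\U(2)\to\SO(4)\to S^2$, whereas the paper chains two Gysin sequences through $\SO(4)/\SU(2)\cong\R P^3$; both are routine and the conclusion is the same.
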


\begin{proof}
Notice that the induced map
\[(\rho \circ \iota)^* \colon H^2(\SO(5)/\SO(2); \Z) \xto{\rho^*} H^2(\SO(5); \Z) \xto{\iota^*} H^2(\SO(4); \Z),\]
which is regarded as $\Z \to \Z_2 \to \Z_2$, is a surjection.
Indeed, applying the Gysin exact sequence of the orientable $\SO(2)$-bundle $\rho$, we see that $\rho^*$ is a surjection; since the 3-skeletons of $\SO(4)$ and $\SO(5)$ are the same via the inclusion $\iota$, we see that $\iota^*$ is an isomorphism.

Next, we verify that the Poincar\'e dual $[\U(2)_J]^*$ generates $H^2(\SO(4); \Z) \cong \Z_2$ as follows.
Applying the Gysin exact sequence to the orientable $S^3$-bundle
\[\pi' \colon \SO(4) \to \SO(4)/\SU(2) \cong \R P^3,\]
we have that 
$(\pi')^* \colon H^2(\SO(4)/\SU(2); \Z) \to H^2(\SO(4); \Z)$
is an isomorphism.
Moreover, applying the Gysin exact sequence to the orientable $S^1$-bundle
\[\pi'' \colon \SO(4)/\SU(2) \to \SO(4)/\U(2) \cong S^2,\]
we have that 
$(\pi'')^* \colon H^2(S^2; \Z) \to H^2(\SO(4)/\SU(2); \Z)$
is a surjection.
Hence the generator of $H^2(\SO(4); \Z)$ is obtained by
\[(\pi')^* \circ (\pi'')^* [\pt]^* = [(\pi'' \circ \pi')^{-1}(\pt)]^* = [\U(2)]^*.\]
Thus we see that
\[c_\tau(j \circ g) = (\phi_f)^*(\Sigma) = (\Phi_{\tau, g})^* (\rho \circ \iota)^*(\Sigma) = (\Phi_{\tau, g})^* [\U(2)]^*
,\]
which completes the proof.
\end{proof}

\begin{proof}[Proof of Proposition \ref{Wuinv_hol_imm}]
Let
\[\Phi_{\tau, g} = \begin{bmatrix} v_1 & v_2 & v_3 & v_4 \end{bmatrix} \colon M^3 \to V_{4,3} \cong \SO(4)\] 
denote the map induced from the differential of $g$ and the parallelization $\tau$, where $v_i(x) = dg_x(e_i)$ for $i = 1, 2, 3$, and $v_4$ is the normal vector field to $g$.

By Lemma \ref{Wuinv_expB}, it suffices to show that $\Phi_{\tau, g}$ does not meet with $\U(2)$.
Let $x \in M^3$. By the form of $\tau$, the 2-plane spanned by $v_2(x)$ and $v_3(x)$ coincides with 
\[dg_x(\xi_x) \subset T_{g(x)}\C^2 = \C^2.\]
By assumption, this 2-plane is complex with respect to $J_0$.
Hence we observe that the 2-plane spanned by $v_3(x)$ and $v_4(x)$ is not complex with respect to $J_0$.
Now we assume that the point $x \in M^3$ satisfies that $\Phi_{\tau, g}(x) \in \U(2)$, i.e., 
\[J_0 \cdot \Phi_{\tau, g}(x) = \Phi_{\tau, g}(x) \cdot J_0.\]
Then we have that $v_4(x) = J_0 \cdot v_3(x)$.
This makes the contradiction to the earlier observation, which completes the proof. 
\end{proof}

\section{Main results}\label{section:main}

In this section, we prove our main results and give their applications.

\subsection{On Main Theorem \ref{main:determination-incl}}\label{section:determination-incl}

We consider inclusion maps
\[\hat{f} \colon F \into \R^6_+ \quad \text{and} \quad f \coloneqq \hat{f}|_{K} \colon K \into \R^5\]
defined in \S \ref{subsubsection:imm-incl}.
We also notice that the normal bundle of $f$ is trivial (see (b) of remarks in below Proposition \ref{Wuinv_ctct_emb}).
Therefore the Smale-type invariant of $f$ makes sense.

\begin{proof}[Proof of Main Theorem \ref{main:determination-incl}]
Even if we remove one point of $S^5_\epsilon$, the map $f \colon (K, \xi_\mathrm{can}) \into (\R^5, \xi_\mathrm{std}|_{\R^5})$ is still a contact embedding.
Then the vanishing of the Wu invariant $c_\tau(f)$ follows from Theorem \ref{Wuinv_ctct_emb}.

On the other hand, the Smale-type invariant $i(f)$ is computed by choosing the inclusion map $\hat{f} \colon F \into \R^6_+$ as a singular Seifert surface of $f$.
\end{proof}

\begin{remark}\label{rmk:intrinsic}
The integer $\alpha(K)$ is an intrinsic data.
Furthermore, the canonical contact structure $\xi_\mathrm{can}$ on $K$ is characterized as the Milnor fillable structure, which is unique up to contactomorphism \cite{CNP06}.
Thus, the regular homotopy class of the inclusion map is determined only by the topology of the link as an abstract 3-manifold and the signature of the Milnor fiber.
\end{remark}

\begin{remark}\label{rmk:MFB}
Main Theorem \ref{main:determination-incl} holds also for the boundary of the Milnor fiber of every non-isolated surface singularity in $\C^3$.
\end{remark}

We list the Smale-type invariants $i(f)$ for all types of simple singularities in Table \ref{data}.
Here, (co)homological data are computed from the plumbing manifold structures of $K \cong M(G)$ and $F \cong X(G)$.
We note that these data have been studied also in general (see, e.g., \cite{Nem99, GP24} for $\sigma(F)$ and \cite{ADHSZ03, NS12} for $\alpha(K)$).

\begin{table}[h]
\caption{Data of simple singularities and their immersions ($n \ge 2$)}
\label{data}
\begin{center}

\begin{tabular}{c||c|c|c|c}
type & $H^2(K; \Z)$ & $\sigma(F)$ & $\alpha(K)$ & $i(f)$ \\\hline
$A_{n-1}$ & $\Z_n$ & $-(n-1)$
&
\begin{tabular}{c}
1 ($n$ even) \\ 0 ($n$ odd)
\end{tabular}
&
\begin{tabular}{c}
$-3n/2 \ $ ($n$ even) \\ $-3(n-1)/2 \ $ ($n$ odd)
\end{tabular}
\\\hline
$D_{n+2}$
&
\begin{tabular}{c}
$\Z_2 \oplus \Z_2$ ($n$ even) \\ $\Z_4$ ($n$ odd)
\end{tabular} 
&
$-(n+2)$
&
\begin{tabular}{c}
2 ($n$ even) \\ 1 ($n$ odd)
\end{tabular}
&
\begin{tabular}{c}
$-3(n + 4)/2 \ $ ($n$ even) \\ $-3(n + 3)/2 \ $ ($n$ odd)
\end{tabular}
\\\hline
$E_6$ & $\Z_3$ & $-6$ & $0$ & $-9$ \\
$E_7$ & $\Z_2$ & $-7$ & 1 & $-12$ \\
$E_8$ & $0$ & $-8$ & 0 & $-12$ 
\end{tabular}
\end{center}
\end{table}

\subsection{On Main Theorem \ref{main:determination-Kinjo}}\label{section:determination-Kinjo}

We consider Kinjo's immersions
\[\hat{g}_G \colon X(G) \imm \C^2 \quad \text{and} \quad g_G \coloneqq \hat{g}_G|_{M(G)} \colon M(G) \imm \C^2\]
which are defined in \S \ref{subsubsection:imm-Kinjo} and reconstructed in \S \ref{subsection:reconstruction-identification}.
Notice that the normal bundle of $j \circ g_G$ is trivial.
Therefore the Smale-type invariant of $j \circ g_G$ makes sense.

\begin{proof}[Proof of Main Theorem \ref{main:determination-Kinjo}]
We consider the almost contact structure $\xi'_\mathrm{can}$ on $M(G)$, which was defined in \S \ref{subsection:reconstruction-identification}.
To show the vanishing of the Wu invariant, we will apply Proposition \ref{Wuinv_hol_imm} to $g_G$.
We verify that $\xi'_\mathrm{can}$ satisfies the assumptions of Proposition \ref{Wuinv_hol_imm} as follows.
Since we have the identification of $(M(G), \xi'_\mathrm{can})$ and $(K, \xi_\mathrm{can})$ as almost contact manifolds, it holds that $c_1(\xi'_\mathrm{can}) = c_1(\xi_\mathrm{can})$.
We also recall that $(K, \xi_\mathrm{can})$ admits a contact embedding into $(\R^5, \xi_\mathrm{std}|_{\R^5})$ by the inclusion map $f$. 
Then Theorem \ref{Kas1.3} ensures that $c_1(\xi_\mathrm{can}) = 0$. 
Hence $\xi'_\mathrm{can}$ is trivial as a 2-plane field.
Furthermore, $dg_G(\xi'_\mathrm{can}) \subset T\C^2$ was complex everywhere by construction.
Therefore we can apply Proposition \ref{Wuinv_hol_imm} to $g_G$.

To compute $i(j \circ g_G)$, we consider the push-forward of $j \circ \hat{g}_G \colon X(G) \imm \R^5$ into $\R^6_+$.
This is not an embedding, but by homotoping the spine of $X(G)$ in $\R^5$, this can be regularly homotoped to an embedding.
Moreover, pushing the image of interior of $X(G)$ into $\R^6_+$, the result map satisfies the condition to be a singular Seifert surface of $j \circ g_G$.
Then the assertion follows from $\sigma(X(G)) = -\# V(G)$.
\end{proof}

\subsection{Applications}\label{section:application}

We obtain the following as a corollary of Main Theorems \ref{main:determination-incl} and \ref{main:determination-Kinjo}, which refines and generalizes Corollary \ref{cor:reg-htpicity-AD}.

\begin{corollary}\label{cor:reg-htpicity}
{\em
For each type of simple singularities, its link $K$, and the corresponding Dynkin diagram $G$, identify $K$ and $M(G)$ as explained in \S \ref{subsubsection:identification}.
Then the inclusion map $f \colon K \into \R^5$ and the immersion $j \circ g_G \colon M(G) \imm \R^5$ have the same Wu and Smale-type invariants.
Therefore, these two immersions are regularly homotopic.
}
\end{corollary}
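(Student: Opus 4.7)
The plan is to assemble the corollary by reading off the complete invariants provided by the two Main Theorems and checking that they match. First, I would fix a common almost contact parallelization $\tau$: this makes sense because \S\ref{subsubsection:identification} has already identified $(K, \xi_{\mathrm{can}})$ with $(M(G), \xi'_{\mathrm{can}})$ as almost contact manifolds, so an almost contact parallelization for one serves for the other. Then Main Theorem \ref{main:determination-incl} gives $c_\tau(f) = 0$ and Main Theorem \ref{main:determination-Kinjo} gives $c_\tau(j \circ g_G) = 0$, so the Wu components already agree. What remains is to show the identity
\[\frac{3}{2}(\sigma(F) - \alpha(K)) \; = \; \frac{3}{2}(-\#V(G) - \alpha(M(G))).\]

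Next, I would justify this identity using standard features of simple singularities. Since $K$ is identified with $M(G)$ as a smooth oriented 3-manifold, Definition \ref{dfn:alpha} immediately gives $\alpha(K) = \alpha(M(G))$. For the signatures, I would invoke the fact recalled in \S\ref{section:surf-sing} that for a simple singularity of type $G$ the Milnor fiber $F$ is diffeomorphic to the plumbed 4-manifold $X(G)$. The intersection form of $X(G)$ is the Cartan-type matrix with diagonal entries $-2$ and off-diagonal entries determined by the edges of $G$; for any Dynkin diagram of type $A$-$D$-$E$ this form is negative definite of rank $\#V(G)$, so $\sigma(F) = \sigma(X(G)) = -\#V(G)$. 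Substituting these two equalities yields $i(f) = i(j \circ g_G)$.

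Finally, I would invoke Theorem \ref{complete}. Both immersions have trivial normal bundle—for $f$ because it is an embedding and the normal Euler class of an embedding vanishes, and for $j \circ g_G$ this was noted just before the proof of Main Theorem \ref{main:determination-Kinjo}—so they lie in $\Imm[M(G), \R^5]_0$. Since they share the same pair $(c_\tau, i) = (0, \tfrac{3}{2}(-\#V(G) - \alpha(M(G))))$, the bijection of Theorem \ref{complete} forces them into the same regular homotopy class. I expect no real obstacle here: the corollary is essentially bookkeeping on top of the two Main Theorems, with the only substantive input being the classical computation $\sigma(X(G)) = -\#V(G)$ and the identification of the Milnor fiber with the resolution space for simple singularities.
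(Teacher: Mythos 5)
Your proposal is correct and matches the paper's (implicit) argument: the corollary is stated as an immediate consequence of the two Main Theorems via the coincidence of the complete invariants $(c_\tau, i)$ from Theorem \ref{complete}, with the only substantive inputs being $\sigma(F) = \sigma(X(G)) = -\#V(G)$ and $\alpha(K) = \alpha(M(G))$, exactly as you identify. The point about transporting an almost contact parallelization through the identification of \S\ref{subsubsection:identification} is handled the same way in the paper, and is harmless here since both Wu invariants vanish for any such parallelization.
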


We also show the following by further argument, which recovers Theorem \ref{thm:Kin15}.
The following proof is due to G.~Pint\'er.
%In the proof, the way to compute $\Omega(\bar{g}_G \circ p)$ is due to G.~Pint\'er.

\begin{theorem}\label{thm:determination-Kinjo-Smale}
{\em
For each type of simple singularities and the corresponding Dynkin diagram $G$, the Smale invariant of $\bar{g}_G \circ p$ is
\[
\Omega(\bar{g}_G \circ p) = 
\begin{cases}
(n^2 - 1, 0) & \text{ for } A_{n-1} \ (n \ge 2); \\
(4n^2 + 12n - 1, 0) & \text{ for } D_{n+2} \ (n \ge 2); \\
(167, 0) & \text{ for } E_6; \\
(383, 0) & \text{ for } E_7; \\
(1079, 0) & \text{ for } E_8.
\end{cases} 
\]
}
\end{theorem}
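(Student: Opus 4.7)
The plan is to determine $\Omega(\bar{g}_G \circ p) \in \pi_3(\SO(4)) \cong \Z \oplus \Z$ by pinning down its two coordinates via two independent pieces of information: first, the total Smale invariant in $\pi_3(\SO(5))$ after pushing forward by $j$, which is provided by Corollary \ref{cor:reg-htpicity} combined with Theorem \ref{thm:NP15} of N\'emethi--Pint\'er; second, the normal mapping degree of $\bar{g}_G \circ p$, which I would compute directly from Kinjo's geometric construction.

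First I would apply Corollary \ref{cor:reg-htpicity} to conclude that $j \circ g_G \circ p$ and $f \circ p$ are regularly homotopic in $\R^5$, so their Smale invariants agree and equal $s_G \coloneqq \Omega(f \circ p)$, the known integer from Theorem \ref{thm:NP15}. Writing $\Omega(g_G \circ p) = (a, b) \in \Z \oplus \Z$, the push-forward formula of Proposition \ref{thm:relationship-push} yields the linear constraint $a + 2b = s_G$. To obtain a second equation I would compute the first coordinate of $\Omega(\bar{g}_G \circ p)$, which by Theorem \ref{thm:ET11} equals $D(\bar{g}_G \circ p) - 1$. Since the Gauss map of $\bar{g}_G \circ p$ factors through the orientation-preserving $|\Gamma|$-fold cover $p \colon S^3 \to M(G)$, one has $D(\bar{g}_G \circ p) = |\Gamma| \cdot \deg(\nu_{\bar{g}_G})$, where $\nu_{\bar{g}_G} \colon M(G) \to S^3$ denotes the unit normal field of $\bar{g}_G$.

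Next I would compute $\deg(\nu_{\bar{g}_G})$ by means of the extension $\hat{\bar{g}}_G \colon X(G) \imm \R^4$ obtained from Kinjo's $\hat{g}_G$ by precomposing with an orientation-reversing self-diffeomorphism of $X(G)$ that restricts to $r$ on the boundary. A Poincar\'e--Hopf type argument for the boundary of an immersed 4-manifold in $\R^4$, based on a generic linear functional on $\R^4$ whose pullback to $X(G)$ is Morse with index data tied to $\chi(X(G))$, should yield $\deg(\nu_{\bar{g}_G}) = \chi(X(G))$. Since $X(G)$ deformation retracts onto the transverse union of $\#V(G)$ two-spheres forming its spine, $\chi(X(G)) = \#V(G) + 1$, and hence $D(\bar{g}_G \circ p) = |\Gamma|(\#V(G) + 1)$.

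Combining the two ingredients: the orientation-reversal formula (stated between Theorems \ref{thm:Kin15} and Corollary \ref{thm:Kin15-rev}), applied via the identification $\bar{g}_G \circ p = g_G \circ p \circ \phi$ for some orientation-reversing self-diffeomorphism $\phi$ of $S^3$, gives $\Omega(\bar{g}_G \circ p) = -\Omega(g_G \circ p) + (-2, 1) = (-a - 2, 1 - b)$. Equating the first coordinate with $|\Gamma|(\#V(G) + 1) - 1$ determines $a$, and the linear constraint $a + 2b = s_G$ then pins down $b$. Substituting the orders $|\Gamma| = n, 4n, 24, 48, 120$ for $A_{n-1}, D_{n+2}, E_6, E_7, E_8$ together with $s_G$ from Theorem \ref{thm:NP15} reproduces the formulas in the statement, and the second coordinate of $\Omega(\bar{g}_G \circ p)$ works out to be $0$ uniformly, matching Kinjo's answer in the $A$-$D$ cases. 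The main obstacle will be the careful bookkeeping of orientation conventions needed to obtain $\deg(\nu_{\bar{g}_G}) = +\chi(X(G))$ with the correct sign, since both the extension $\hat{\bar{g}}_G$ (which is orientation-reversing on the source) and the Gauss-map sign convention used in the Smale invariant interact subtly.
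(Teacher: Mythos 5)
Your proposal is correct and follows essentially the same route as the paper: one linear constraint from Corollary \ref{cor:reg-htpicity}, Theorem \ref{thm:NP15} and Proposition \ref{thm:relationship-push}, and a second from the first coordinate $D(\bar{g}_G\circ p)-1$ with $D(\bar{g}_G\circ p)=\#\Gamma\cdot\chi(X(G))$. The only difference is that the paper simply cites Kirby--Melvin \cite[Theorem 2.2(b)]{KM99} for $D(\bar{g}_G)=\chi(X(G))$ rather than re-deriving it by a Poincar\'e--Hopf argument, and it bookkeeps the orientation reversal by writing $\Omega(j\circ g_G\circ p)=-\Omega(j\circ\bar{g}_G\circ p)$ directly instead of passing through the $(-2,1)$ correction in $\pi_3(\SO(4))$; your version is an equivalent reparametrization and yields the same numbers.
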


\begin{proof}
Let us denote $\Omega(\bar{g}_G \circ p) = (a, b)$.
By Corollary \ref{cor:reg-htpicity} and Proposition \ref{thm:relationship-push},
\[\Omega(\Phi|_\mathfrak{S}) = \Omega(f \circ p) = \Omega(j \circ g_G \circ p) = -\Omega(j \circ \bar{g}_G \circ p) = -(a + 2b).\]
Since the left-hand side is computed by Theorem \ref{thm:NP15}, it suffices to compute only the integer $a$.
By Theorem \ref{thm:ET11}, it holds that $a = D(\bar{g}_G \circ p) - 1$, where $D(\bar{g}_G \circ p)$ is the normal mapping degree of $g_G \circ p$.
Since the covering degree of $p$ is equal to the order $\# \Gamma$ of the corresponding group $\Gamma$ (recall \S \ref{subsubsection:quotient}), we obtain that 
\[a = D(\bar{g}_G \circ p) - 1 = \# \Gamma \cdot D(\bar{g}_G) - 1.\] 
Moreover the immersion $g_G \colon M(G) \imm \R^4$ bounds the immersion $\hat{g}_G \colon X(G) \imm \R^4$. Hence
\[D(\bar{g}_G) = \chi(X(G)) = 1 + b_2(X(G)) = 1 + \# V(G)\]
(see, e.g., \cite[Theorem 2.2(b)]{KM99}), where $\# V(G)$ is the number of vertices of the corresponding Dynkin diagram $G$. 
Therefore we have that
\[a = \# \Gamma \cdot (1 + \# V(G)) - 1.\]
The proof is completed by computing the right-hand side for each case.
\end{proof}

\begin{remark}\label{rmk:argument}
In contrast to Kinjo's argument in \cite{Kin15}, we avoided to construct singular Seifert surfaces and count their singularities. The crucial point in our argument is to focus on almost contact structures on the 3-manifolds and their tangential properties in target spaces, and employ N\'emethi--Pint\'er's formula.
\end{remark}

We obtain two corollaries of Theorem \ref{thm:determination-Kinjo-Smale}.
The following corresponds to Corollary \ref{thm:Kin15-rev}.

\begin{corollary}\label{thm:determination-Kinjo-Smale-rev}
{\em
Under the same setup in Theorem \ref{thm:determination-Kinjo-Smale}, the Smale invariant of $g_G \circ p$ is
\[
\Omega(g_G \circ p) = 
\begin{cases}
(-n^2 - 1, 1) & \text{ for } A_{n-1} \ (n \ge 2); \\
(-4n^2 - 12n - 1, 1) & \text{ for } D_{n+2} \ (n \ge 2); \\
(-169, 1) & \text{ for } E_6; \\
(-385, 1) & \text{ for } E_7; \\
(-1081, 1) & \text{ for } E_8.
\end{cases}
\]
}
\end{corollary}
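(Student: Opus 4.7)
The plan is to deduce this corollary directly from Theorem \ref{thm:determination-Kinjo-Smale} together with the orientation-reversal formula stated just before Corollary \ref{thm:Kin15-rev}, namely $\Omega(g \circ r) = -\Omega(g) + (-2, 1)$ for any immersion $g \colon S^3 \imm \R^4$ and any orientation-reversing diffeomorphism $r \colon S^3 \to S^3$. The key observation is that by Remark \ref{rmk:orientation-of-source}, we have $\bar{g}_G = g_G \circ r$, where $r \colon M(G) \to M(G)$ is an orientation-reversing diffeomorphism.

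First, I would lift $r$ through the universal covering $p \colon S^3 \to M(G)$: since $S^3$ is simply connected, there exists a diffeomorphism $\tilde{r} \colon S^3 \to S^3$ such that $p \circ \tilde{r} = r \circ p$. Because $p$ is a local orientation-preserving diffeomorphism and $r$ reverses orientation, the lift $\tilde{r}$ is also orientation-reversing. Substituting, we obtain the identity
\[\bar{g}_G \circ p = g_G \circ r \circ p = g_G \circ p \circ \tilde{r},\]
so the $S^3$-immersion $\bar{g}_G \circ p$ is the precomposition of $g_G \circ p$ with an orientation-reversing self-diffeomorphism of $S^3$.

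Applying the orientation-reversal formula then gives
\[\Omega(\bar{g}_G \circ p) = -\Omega(g_G \circ p) + (-2, 1),\]
which solves to $\Omega(g_G \circ p) = -\Omega(\bar{g}_G \circ p) + (-2, 1)$. Plugging in the values of $\Omega(\bar{g}_G \circ p) = (a, 0)$ from Theorem \ref{thm:determination-Kinjo-Smale}, where $a$ equals $n^2 - 1$, $4n^2 + 12n - 1$, $167$, $383$, or $1079$ in the respective cases, yields $\Omega(g_G \circ p) = (-a - 2, 1)$, which matches each entry of the claimed list.

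There is essentially no analytic obstacle here; the only point that requires care is confirming that the lift $\tilde{r}$ is indeed orientation-reversing, which follows at once from $p$ being a local diffeomorphism between oriented manifolds. Everything else is a direct algebraic substitution into the orientation-reversal formula, so the proof reduces to two lines once Theorem \ref{thm:determination-Kinjo-Smale} is in hand.
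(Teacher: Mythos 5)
Your proof is correct and follows exactly the route the paper intends: the corollary is deduced from Theorem \ref{thm:determination-Kinjo-Smale} via the orientation-reversal formula $\Omega(g \circ r) = -\Omega(g) + (-2,1)$, just as Corollary \ref{thm:Kin15-rev} is deduced from Theorem \ref{thm:Kin15}. Your explicit verification that the orientation-reversing diffeomorphism $r$ of $M(G)$ lifts to an orientation-reversing diffeomorphism $\tilde{r}$ of $S^3$ with $\bar{g}_G \circ p = g_G \circ p \circ \tilde{r}$ is a detail the paper leaves implicit, and it is handled correctly.
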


The following is immediate from the computation.
However, the author does not know any geometric reason for this phenomenon.

\begin{corollary}
{\em
Among immersions $g_G \circ p \colon S^3 \imm \R^4$ for all $A$-$D$-$E$ cases, only two immersions $g_{D_{17}} \circ p$ and $g_{E_8} \circ p$ are regularly homotopic.
}
\end{corollary}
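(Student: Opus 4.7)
The plan is to reduce the statement to a purely arithmetic comparison of the Smale invariants listed in Corollary~\ref{thm:determination-Kinjo-Smale-rev}, and then carry out the finite Diophantine case check. Recall from \S\ref{subsection:S3R4} that the Smale invariant $\Omega \colon \Imm[S^3,\R^4] \to \pi_3(\SO(4)) \cong \Z \oplus \Z$ is a \emph{complete} invariant. Thus $g_G \circ p$ and $g_{G'} \circ p$ are regularly homotopic if and only if $\Omega(g_G \circ p) = \Omega(g_{G'} \circ p)$. In every case of Corollary~\ref{thm:determination-Kinjo-Smale-rev} the second coordinate equals $1$, so the problem reduces to determining when the first coordinates coincide.

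Introduce the abbreviations $a_A(n) = -n^2 - 1$ for $A_{n-1}$ ($n \ge 2$), $a_D(n) = -4n^2 - 12n - 1$ for $D_{n+2}$ ($n \ge 2$), and the three exceptional values $a_{E_6} = -169$, $a_{E_7} = -385$, $a_{E_8} = -1081$. I would organize the verification into the following sub-cases.

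First I would handle the coincidences \emph{within} a family. Since $a_A$ is strictly decreasing in $n$, no two distinct $A$-types collide. For the $D$ family, rewriting $a_D(n) = -(2n+3)^2 + 8$ shows $a_D$ is likewise strictly decreasing in $n$, so no two distinct $D$-types collide. The three exceptional values are obviously pairwise distinct.

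Next I would check each cross-family comparison by reducing it to a Pell-like equation. For $A_{n-1}$ versus $D_{m+2}$, the equation $n^2 + 1 = 4m^2 + 12m + 1$ becomes $(2m+3)^2 - n^2 = 9$, so $(2m+3-n)(2m+3+n) = 9$; the only positive factorization compatible with $n \ge 2$ and $m \ge 2$ fails, ruling out all $A$--$D$ pairs. For $A_{n-1}$ against each of $E_6,E_7,E_8$ one checks that $168, 384, 1080$ are non-squares. For $D_{n+2}$ against $E_6$ and $E_7$, the resulting quadratics $n^2+3n-42=0$ and $n^2+3n-96=0$ have non-square discriminants $177$ and $393$. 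The decisive case is $D_{n+2}$ versus $E_8$: the equation $4n^2+12n+1=1081$ rewrites as $(2n+3)^2 = 1089 = 33^2$, yielding the unique admissible solution $n=15$, i.e.\ $G=D_{17}$. A direct check $-4\cdot 15^2 - 12\cdot 15 - 1 = -1081 = a_{E_8}$ confirms the coincidence.

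There is essentially no obstacle here beyond a careful enumeration: the only genuinely nontrivial point is recognizing that $1089$ is a perfect square, which pinpoints $D_{17}$ and $E_8$ as the unique pair. Putting these pieces together with Corollary~\ref{thm:determination-Kinjo-Smale-rev} yields the statement.
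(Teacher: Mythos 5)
Your proposal is correct and is essentially the paper's argument: the paper simply declares the corollary ``immediate from the computation'' of Corollary~\ref{thm:determination-Kinjo-Smale-rev}, and your write-up just makes the underlying arithmetic explicit (completeness of the Smale invariant, all second coordinates equal to $1$, and the case check culminating in $(2n+3)^2=1089$ giving $n=15$, i.e.\ $D_{17}$ matching $E_8$).
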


Here notice that the corresponding assertion for immersions $\Phi|_\mathfrak{S} = f \circ p$, which are indeed regularly homotopic to $j \circ g_G \circ p$, is already known by Theorem \ref{thm:NP15}.

\appendix

\section{Switching of the Wu invariant}\label{app:switch}

In \S \ref{section:Wuinv}, we investigated the property of the Wu invariant with respect to an almost contact parallelization.
In this appendix, we quantify the switching of the Wu invariant when changing a given parallelization to another one. We also discuss the realization problem of a given cohomology class as the Wu invariant.
Let $M^3$ be an oriented 3-manifold and $f \colon M^3 \imm \R^5$ an immersion.
We show the following two.

\begin{proposition}\label{switch}
{\em
For two parallelizations $\tau_0, \tau_1 \colon TM^3 \xto{\cong} M^3 \times \R^3$ of $M^3$, it holds that
\[c_{\tau_1}(f) = c_{\tau_0}(f) + \beta(d(\tau_0, \tau_1)),\]
where $d(\tau_0, \tau_1) \in H^1(M^3; \pi_1(\SO(3))) \cong H^1(M^3; \Z_2)$ is the difference class between $\tau_0$ and $\tau_1$ (see \S \ref{subsection:setup-of-app}) 
and $\beta \colon H^1(M^3; \Z_2) \to H^2(M^3; \Z)$ is the Bockstein homomorphism.
}
\end{proposition}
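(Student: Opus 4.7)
The plan is to relate the two maps $\phi_{\tau_i, f} \colon M^3 \to V_{5,3}$ through the natural right $\SO(3)$-action on frames $\mu \colon V_{5,3} \times \SO(3) \to V_{5,3}$, and then reduce the problem to computing $\mu^*\Sigma$. Two parallelizations differ fiberwise by a smooth map $g \colon M^3 \to \SO(3)$, and from the definitions one verifies $\phi_{\tau_1, f} = \mu \circ (\phi_{\tau_0, f}, g)$ for the natural convention; moreover, $d(\tau_0, \tau_1) = g^*(u) \in H^1(M^3; \Z_2)$ for the generator $u \in H^1(\SO(3); \Z_2)$.

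Next, I would compute $\mu^*\Sigma \in H^2(V_{5,3} \times \SO(3); \Z)$ by Künneth. Since $V_{5,3}$ is simply connected (immediate from the long exact sequence of $\SO(2) \to \SO(5) \to V_{5,3}$) and $H^1(\SO(3); \Z) = 0$, one has
\[H^2(V_{5,3} \times \SO(3); \Z) \cong H^2(V_{5,3}; \Z) \oplus H^2(\SO(3); \Z) \cong \Z \oplus \Z_2.\]
Restricting $\mu$ to $V_{5,3} \times \{1\}$ recovers the identity, forcing $\mu^*\Sigma = \Sigma \times 1 + 1 \times v$ for some $v \in \Z_2$, which is precisely the pullback of $\Sigma$ under the orbit map $o \colon \SO(3) \to V_{5,3}$, $g \mapsto (ge_1, ge_2, ge_3)$.

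The main step, and central obstacle, is to show $v$ equals the nonzero element $w = \beta(u) \in H^2(\SO(3); \Z) \cong \Z_2$. For this I would exploit the fibration $\SO(3) \to V_{5,3} \to \tilde{Gr}_3(\R^5)$ in which $o$ is a fiber inclusion. The base is the oriented Grassmannian $\tilde{Gr}_3(\R^5) \cong \tilde{Gr}_2(\R^5)$, a smooth complex quadric $Q^3 \subset \C P^4$ whose integral cohomology is concentrated in even degrees; in particular $H^3(\tilde{Gr}_3(\R^5); \Z) = 0$. In the Serre spectral sequence, $E_2^{0,2} = H^2(\SO(3); \Z) = \Z_2$ has $d_2$ landing in $H^2(\tilde{Gr}_3; H^1(\SO(3))) = 0$ and $d_3$ landing in $H^3(\tilde{Gr}_3; \Z) = 0$, so $E_\infty^{0,2} = \Z_2$ and the fiber restriction $H^2(V_{5,3}; \Z) \onto H^2(\SO(3); \Z)$ is surjective, forcing $o^*\Sigma = w$.

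Putting everything together, naturality gives
\[c_{\tau_1}(f) = (\phi_{\tau_0, f}, g)^*\mu^*\Sigma = \phi_{\tau_0, f}^*\Sigma + g^*w = c_{\tau_0}(f) + g^*w,\]
and since $w = \beta(u)$ and the Bockstein commutes with pullback, $g^*w = \beta(g^*u) = \beta(d(\tau_0, \tau_1))$, completing the proof.
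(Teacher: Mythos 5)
Your argument is correct, and its overall skeleton is the same as the paper's: both express $\phi_{\tau_1,f}$ as $\phi_{\tau_0,f}$ twisted by the transition function $g$ through the right $\SO(3)$-action, both use a K\"unneth decomposition of $H^2$ of a product with $\SO(3)$ (the paper works on $M^3\times\SO(3)$ after trivializing the pulled-back frame bundle, you work directly on $V_{5,3}\times\SO(3)$, which is slightly cleaner since $V_{5,3}$ is simply connected), and both reduce the whole statement to the single computation that $\Sigma$ restricts to the nonzero class on the $\SO(3)$-orbit, combined with the standard identifications $d(\tau_0,\tau_1)=g^*(u)$ and $\beta(u)=w$. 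The one place where you genuinely diverge is the proof of that key computation: the paper factors the fiber inclusion as $\SO(3)\hookrightarrow\SO(5)\to V_{5,3}$ and combines the coincidence of $2$-skeletons with the Gysin sequence of the circle bundle $\SO(5)\to V_{5,3}$, whereas you run the Serre spectral sequence of $\SO(3)\to V_{5,3}\to \tilde{Gr}_3(\R^5)\cong Q_3$ and use the vanishing of $H^3(Q_3;\Z)$. Both are sound; the paper's route is more elementary (no spectral sequence, no identification of the Grassmannian with a quadric), while yours has the mild advantage of exhibiting the fiber restriction $H^2(V_{5,3};\Z)\to H^2(\SO(3);\Z)$ as surjective purely from the topology of the base, without choosing a lift to $\SO(5)$. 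The only steps you leave implicit --- that $d(\tau_0,\tau_1)=g^*(u)$ and that $\mu$ restricted to $\{pt\}\times\SO(3)$ is the fiber inclusion --- are exactly the paper's Claim on the difference class and its orbit-map identification, and both check out.
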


\begin{proposition}\label{realize}
{\em
For the normal Euler class $\chi \in H^2(M^3; \Z)$ of $f$ and any class $c \in \Gamma_2(\chi)$, there exists a parallelization $\tau$ of $M^3$ such that $c_\tau(f) = c$.
}
\end{proposition}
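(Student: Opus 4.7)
The plan is to combine Proposition \ref{switch} with two surjectivity statements. First I would fix any parallelization $\tau_0$ of $M^3$ and set $c_0 \coloneqq c_{\tau_0}(f) \in \Gamma_2(\chi)$. For any target $c \in \Gamma_2(\chi)$, the difference satisfies $2(c-c_0) = \chi - \chi = 0$, so $c - c_0$ lies in the 2-torsion subgroup $H^2(M^3; \Z)[2]$. The task thus reduces to realizing every 2-torsion class as a switching term $\beta(d(\tau_0, \tau_1))$ for some parallelization $\tau_1$.

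Next I would show that $\beta \colon H^1(M^3; \Z_2) \to H^2(M^3; \Z)[2]$ is surjective. This is a routine consequence of the long exact sequence in cohomology associated to $0 \to \Z \xto{\times 2} \Z \to \Z_2 \to 0$, which identifies $\mathrm{Im}(\beta)$ with $\ker\bigl(\times 2 \colon H^2(M^3; \Z) \to H^2(M^3; \Z)\bigr) = H^2(M^3; \Z)[2]$. Pick $d \in H^1(M^3; \Z_2)$ with $\beta(d) = c - c_0$; then by Proposition \ref{switch}, any $\tau_1$ with $d(\tau_0, \tau_1) = d$ will satisfy $c_{\tau_1}(f) = c_0 + \beta(d) = c$, proving the claim.

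The remaining step is to realize every $d \in H^1(M^3; \Z_2)$ as a difference class of parallelizations. The set of parallelizations is a torsor over $[M^3, \SO(3)]$, and for $\gamma \colon M^3 \to \SO(3)$ the modified parallelization $\tau_1 \coloneqq \gamma \cdot \tau_0$ should satisfy $d(\tau_0, \tau_1) = \gamma^*(\alpha)$, where $\alpha \in H^1(\SO(3); \Z_2) \cong \Z_2$ is the generator. Under the identification $K(\Z_2,1) \simeq \R P^\infty$, the inclusion $\SO(3) = \R P^3 \into \R P^\infty$ classifies $\alpha$; since $M^3$ is a $3$-dimensional CW complex, cellular approximation lets me homotope any classifying map $M^3 \to \R P^\infty$ into the $3$-skeleton $\R P^3 = \SO(3)$. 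Hence $d$ is realized by some $\gamma$, giving the required $\tau_1$.

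The main obstacle is verifying this last step in a way fully compatible with the definition of $d(\tau_0, \tau_1)$ from \S\ref{subsection:setup-of-app}. Concretely, one must confirm that the primary obstruction to homotoping two parallelizations through parallelizations --- which a priori only records the first-stage Postnikov invariant in $H^1(M^3; \pi_1(\SO(3)))$ and ignores a possible secondary class in $H^3(M^3; \pi_3(\SO(3))) = H^3(M^3; \Z)$ --- is indeed what appears in Proposition \ref{switch}, and that it can be prescribed arbitrarily. Once this identification is granted, everything else is a direct assembly of the ingredients above.
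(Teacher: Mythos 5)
Your proof is correct and follows essentially the same route as the paper: reduce via Proposition \ref{switch} and the surjectivity of the Bockstein $\beta\colon H^1(M^3;\Z_2)\to\Gamma_2(0)$ to realizing an arbitrary class of $H^1(M^3;\Z_2)$ as a difference class $d(\tau_0,\tau_1)$. The only (harmless) variation is in that last step, where you compress a classifying map $M^3\to\R P^\infty$ into the $3$-skeleton $\R P^3\cong\SO(3)$ by cellular approximation while the paper builds the transition function explicitly on skeleta; your closing worry about a secondary obstruction in $H^3(M^3;\pi_3(\SO(3)))$ is moot, since $d(\tau_0,\tau_1)$ is by definition the difference class over the $1$-skeleton and Proposition \ref{switch} involves nothing more.
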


\subsection{Setup}\label{subsection:setup-of-app}

We consider the homogeneous space $G_{5,3} \coloneqq \SO(5) / \{\SO(3) \times \SO(2)\}$,  the oriented Grassmannian. 
Then we have the natural quotient $V_{5,3} \to G_{5,3}$, the Gauss map $\gamma_f \colon M^3 \to G_{5,3}$ of the immersion $f \colon M^3 \imm \R^5$, and the following pull-back diagram.
\begin{center}
\begin{tikzcd}
(\gamma_f)^*\pi \arrow[d, dashed] \arrow[r, "\tilde{\gamma}_f", dashed] & {V_{5,3}} \arrow[d, "\pi"] \\
M^3 \arrow[r, "\gamma_f"]                                               & {G_{5,3}}                 
\end{tikzcd}
\end{center}
Notice that there is one-to-one correspondence
\[\{ \text{parallelizations of } M^3 \} \longleftrightarrow \{ \text{sections of } (\gamma_f)^*\pi \},\]
which is given by $\phi_{\tau, f} = \tilde{\gamma}_f \circ s$ for a parallelization $\tau$ and the corresponding section $s$.
Also notice that the bundle $(\gamma_f)^*\pi$ is trivializable, since this is a principal $\SO(3)$-bundle and has a section.

Let $d(\tau_0, \tau_1) \in H^1(M^3; \pi_1(\SO(3))) \cong H^1(M^3; \Z_2)$ denote the {\it difference class} between two parallelizations $\tau_0$ and $\tau_1$ of $M^3$ over the 1-skeleton of $M^3$, as sections of $(\gamma_f)^*\pi$.

\subsection{Proof of Proposition \ref{switch}}

We trivialize the bundle $(\gamma_f)^*\pi$ by $\phi_{\tau_0, f}$, i.e., 
so that $\tau_0$ corresponds to the constant section.
Then $\tilde{\gamma}_f \colon M^3 \times \SO(3) \to V_{5,3}$ is of the form
\[\tilde{\gamma}_f(x, A) = \phi_{\tau_0, f} \cdot A.\]
Furthermore, the other parallelization $\tau_1$ of $M^3$ forms the section
\[s_{\tau_1} \colon M^3 \to M^3 \times \SO(3), \quad s_{\tau_1}(x) = (x, g_{10}(x)),\]
where $g_{10} \colon M^3 \to \SO(3)$ is the transition function from $\tau_0$ to $\tau_1$ on the bundle $TM^3$.
Then the following diagram commutes.
\begin{center}
\begin{tikzcd}
M^3 \times \SO(3) \arrow[r, "\tilde{\gamma}_f"] & {V_{5,3}} \arrow[d, "\pi"] \\
M^3 \arrow[r, "\gamma_f"] \arrow[u, "s_{\tau_1}"] \arrow[ru, "{\phi_{\tau_1, f}}" description] & {G_{5,3}}                 
\end{tikzcd}
\end{center}
We can verify that $c_{\tau_1}(f) = (s_{\tau_1})^* \circ (\tilde{\gamma}_f)^* (\Sigma)$ from the diagram. Since it holds that
\[(\tilde{\gamma}_f)^*(\Sigma) = (c_{\tau_0}(f), \iota^*(\Sigma)) \quad \text{and} \quad (s_{\tau_1})^*(u, t) = u + (g_{10})^*(t),\]
where $\iota \colon \SO(3) \into V_{5,3}$ is the inclusion as the fiber of $V_{5,3} \to G_{5,3}$, we have that
\[c_{\tau_1}(f) = c_{\tau_0}(f) + (g_{10})^* \circ \iota^*(\Sigma).\]

To conclude Theorem \ref{switch}, we claim the following two.
Let $\kappa \in H^2(\SO(3); \Z) \cong \Z_2$ and $\lambda \in H^1(\SO(3); \Z_2) \cong \Z_2$ denote the generator, respectively.

\begin{claim}\label{isk}
{\em
The restriction $\iota^* \colon H^2(V_{5,3}, \Z) \to H^2(\SO(3); \Z)$ is a surjection, i.e., $\iota^*(\Sigma) = \kappa$.
}
\end{claim}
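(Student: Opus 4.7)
The plan is to pass to $\Z_2$-coefficients and run the Serre spectral sequence of the fibration $\SO(3) \to V_{5,3} \to G_{5,3}$, in which $\iota$ is the fiber inclusion. The structural input I would exploit is that $V_{5,3}$ is naturally the oriented orthonormal frame bundle of the tautological oriented $3$-plane bundle $\tau \to G_{5,3}$. Consequently the transgression of the generator $\lambda \in H^1(\SO(3); \Z_2)$ coincides with the pull-back of the universal transgression $w_2 \in H^2(B\SO(3); \Z_2)$; that is, $d_2(\lambda) = w_2(\tau) \in H^2(G_{5,3}; \Z_2)$.

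Next I would check that $w_2(\tau) \neq 0$. Via the orthogonal-complement diffeomorphism $G_{5,3} \cong G_{5,2}$, the bundle $\tau$ corresponds to the complement of the tautological oriented $2$-plane bundle $L$ on $G_{5,2}$. Since $L \oplus L^\perp$ is trivial and both summands are oriented, the mod-$2$ Whitney formula gives $w_2(\tau) = w_2(L)$. Viewing $L$ as a complex line bundle, $w_2(L) = e(L) \bmod 2$, and the boundary isomorphism $\pi_2(G_{5,2}) \xto{\cong} \pi_1(\SO(2)) = \Z$ in the homotopy LES of $\SO(2) \to V_{5,2} \to G_{5,2}$ shows that a generator of $\pi_2(G_{5,2}) \cong \Z$ classifies a pull-back of $L$ to $S^2$ of Euler number $\pm 1$. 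Hence $e(L)$ generates $H^2(G_{5,2}; \Z) \cong \Z$, so $w_2(L) = e(L) \bmod 2$ is the nonzero element of $H^2(G_{5,2}; \Z_2) \cong \Z_2$.

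With this I assemble the spectral sequence. Since $G_{5,3}$ is simply connected (from the homotopy LES of $\SO(3) \times \SO(2) \to \SO(5) \to G_{5,3}$, whose fiber has $\pi_1$ surjecting onto $\pi_1(\SO(5)) = \Z_2$), one has $E_2^{1,1} = 0$. The differential $d_2(\lambda) = w_2(\tau)$ is a generator of $E_2^{2,0} = H^2(G_{5,3}; \Z_2) \cong \Z_2$, so $E_\infty^{0,1} = \Ker d_2 = 0$ and $E_\infty^{2,0} = \operatorname{coker} d_2 = 0$. Thus the only possibly nonzero piece in the filtration of $H^2(V_{5,3}; \Z_2)$ is $E_\infty^{0,2} \subseteq H^2(\SO(3); \Z_2) \cong \Z_2$. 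Since $H^2(V_{5,3}; \Z_2) \cong \Z_2$ (by universal coefficients, using $V_{5,3}$ simply connected with $\pi_2(V_{5,3}) = \Z$), this forces $E_\infty^{0,2} = \Z_2$, and the edge map $\iota^* \colon H^2(V_{5,3}; \Z_2) \to H^2(\SO(3); \Z_2)$ is an isomorphism. Thus $\iota^*(\Sigma) \bmod 2 \neq 0$; since reduction modulo $2$ induces an isomorphism $H^2(\SO(3); \Z) \cong \Z_2 \xto{\cong} H^2(\SO(3); \Z_2)$ that is natural under $\iota^*$, we conclude $\iota^*(\Sigma) = \kappa$. The main obstacle is the nonvanishing of $w_2(\tau)$; the spectral-sequence bookkeeping afterwards is routine once the identification of $V_{5,3}$ as the frame bundle of $\tau$ is in hand.
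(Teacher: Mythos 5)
Your argument is correct, but it takes a genuinely different route from the paper's. The paper factors the fiber inclusion as $\iota = \rho \circ \iota' \colon \SO(3) \into \SO(5) \to V_{5,3}$ and then needs only two elementary facts: $(\iota')^*$ is an isomorphism on $H^2$ because the $2$-skeletons of $\SO(3)$ and $\SO(5)$ agree (both groups are $\Z_2$), and $\rho^*$ is surjective by the Gysin sequence of the circle bundle $\SO(5) \to V_{5,3}$, using $H^1(V_{5,3};\Z) = 0$. You instead run the mod-$2$ Serre spectral sequence of $\SO(3) \to V_{5,3} \to G_{5,3}$, identify $d_2(\lambda)$ with $w_2$ of the tautological $3$-plane bundle via the universal transgression, and verify $w_2 \neq 0$ by passing to the orthogonal complement on $G_{5,2}$ and computing the Euler class of the tautological $2$-plane bundle from the homotopy sequence of its circle bundle $V_{5,2} \to G_{5,2}$; the bookkeeping then shows the edge map is an isomorphism with $\Z_2$-coefficients, and the final reduction step (that $H^2(\SO(3);\Z) \to H^2(\SO(3);\Z_2)$ is an isomorphism, which one checks with the Bockstein sequence) transfers this to integral coefficients. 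Your approach is heavier but yields more: it pins down $\iota^*$ as an isomorphism mod $2$ and explains the surjectivity structurally through $w_2(\tau) \neq 0$ on the Grassmannian, whereas the paper's factorization through $\SO(5)$ is shorter and avoids spectral sequences entirely. One small point worth making explicit in your write-up is that $\Sigma \bmod 2$ is itself nonzero in $H^2(V_{5,3};\Z_2)$ (immediate from $H_1(V_{5,3}) = 0$ and $H_2(V_{5,3}) \cong \Z$ via universal coefficients), which you use implicitly in the last step.
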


\begin{proof}
We decompose the inclusion $\iota \colon \SO(3) \into V_{5,3}$ into three maps as follows:
\[\iota = \rho \circ \iota' \colon \SO(3) \into \SO(5) \to V_{5,3},\]
where $\iota' \colon \SO(3) \into \SO(5)$ is the inclusion and $\rho \colon \SO(5) \to V_{5,3}$ is the natural quotient.
First, the restriction $(\iota')^* \colon H^2(\SO(5); \Z) \to H^2(\SO(3); \Z)$ is an isomorphism. Indeed, the 2-skeletons of $\SO(3)$ and $\SO(5)$ are the same. (In fact, both groups are isomorphic to $\Z_2$.)
Second, the pull-back $\rho^* \colon H^2(V_{5,3}; \Z) \to H^2(\SO(5); \Z)$ is a surjection. 
Indeed, applying the Gysin exact sequence to the orientable $\SO(2)$-bundle $\rho$, we have that
\[H^2(V_{5,3}; \Z) \xto{\rho^*} H^2(\SO(5); \Z) \to H^1(V_{5,3}; \Z) = 0.\]
Therefore $\iota^* = (\iota')^* \circ \rho^*$ is a surjection.
\end{proof}

\begin{claim}\label{td}
{\em
It holds that $(g_{10})^*(\lambda) = d(\tau_0, \tau_1) \in H^1(M^3; \Z_2)$.
}
\end{claim}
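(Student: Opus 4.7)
The plan is to unravel both sides of the equality through obstruction theory for sections of the trivialized $\SO(3)$-bundle $M^3 \times \SO(3) \to M^3$, compare them on the $1$-skeleton of $M^3$, and use the Hurewicz-type identification of $\lambda$ with the tautological class detecting loops in $\SO(3)$.

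First, I would fix a CW structure on $M^3$ and recall the precise obstruction-theoretic definition of $d(\tau_0,\tau_1)$. Because the bundle $(\gamma_f)^*\pi$ has been trivialized via $\tau_0$, the section $s_{\tau_0}$ is constant while $s_{\tau_1}$ is the graph of $g_{10}\colon M^3\to \SO(3)$. Since $\SO(3)$ is path-connected, $s_{\tau_0}$ and $s_{\tau_1}$ are homotopic on the $0$-skeleton, and the primary obstruction to extending this homotopy over the $1$-skeleton is a cocycle in $C^1(M^3;\pi_1(\SO(3)))$ whose cohomology class is, by definition, $d(\tau_0,\tau_1)$. Cell by cell, this cocycle sends a $1$-cell $e$ to the homotopy class of the loop $g_{10}(\partial \bar e) \cdot g_{10}(e)^{-1}$ in $\pi_1(\SO(3))$; after rephrasing via the canonical path from $g_{10}|_{\partial e}$ to the basepoint, it equals the class of $g_{10}\circ e \in \pi_1(\SO(3))$.

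Next, I would recover $(g_{10})^*(\lambda)$ in the same terms. Since $\pi_1(\SO(3))\cong \Z_2$ and $H_1(\SO(3);\Z_2)\cong\Z_2$, the mod-$2$ Hurewicz map is an isomorphism, so by universal coefficients the generator $\lambda \in H^1(\SO(3);\Z_2)$ is characterized as the unique class evaluating to $1$ on any loop representing the nontrivial element of $\pi_1(\SO(3))$. Consequently, for any $1$-cycle $e$ in $M^3$, $\langle (g_{10})^*\lambda, e\rangle$ is the class of the loop $g_{10}\circ e$ in $\pi_1(\SO(3))\cong\Z_2$. Comparing with the previous paragraph shows that $(g_{10})^*\lambda$ and $d(\tau_0,\tau_1)$ have the same evaluation on every $1$-cell, hence they agree in $H^1(M^3;\Z_2)$.

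The main (minor) obstacle is to make sure the sign/basepoint conventions in the definition of $d(\tau_0,\tau_1)$ match those implicit in the identification $\pi_1(\SO(3))\cong\Z_2$ used to normalize $\lambda$; this is a routine but slightly fiddly bookkeeping step, and it is the only place where one could inadvertently introduce a sign discrepancy. Since everything lives in $\Z_2$, however, any such sign is automatically trivial, so the comparison above is in fact rigid. This finishes the proof of the claim, and combined with Claim \ref{isk} it yields Proposition \ref{switch}.
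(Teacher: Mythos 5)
Your proposal is correct and follows essentially the same route as the paper: both identify $\lambda$ as the class detecting nontrivial loops in $\pi_1(\SO(3))$, so that $(g_{10})^*(\lambda)$ becomes the obstruction to homotoping $g_{10}$ to a constant on the $1$-skeleton, which is by definition the difference class $d(\tau_0,\tau_1)$ in the $\tau_0$-trivialization. Your version merely spells out the cochain-level bookkeeping that the paper leaves implicit.
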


\begin{proof}
We regard $(g_{10})^*(\lambda) \in H^1(M^3; \Z_2) \cong \Hom(\pi_1(M^3), \Z_2)$.
Then this homomorphism detects a loop $\gamma \in \pi_1(M^3)$ whose image $(g_{10})_*(\gamma) \in \pi_1(\SO(3))$ is the generator $[\SO(2)] \in \pi_1(\SO(3)) \cong \Z_2$.
Hence $(g_{10})^*(\lambda)$ is the obstruction to homotope $g_{10}$ to the constant map on the 1-skeleton of $M^3$.
Therefore $(g_{10})^*(\lambda)$ coincides with the difference class $d(\tau_0, \tau_1)$.
\end{proof}

The equality of Claim \ref{td} is that of the upper row of the following diagram.
\begin{center}
\begin{tikzcd}
H^1(\SO(3); \Z_2) \cong \Z_2 \arrow[d, "\beta"] \arrow[r, "\tau^*"] & H^1(M^3; \Z_2) \arrow[d, "\beta"] \\
H^2(\SO(3); \Z) \cong \Z_2 \arrow[r, "\tau^*"]                      & H^2(M^3; \Z)                     
\end{tikzcd}
\end{center}
Taking down the equality by Bockstein homomorphisms, we have that
\[(g_{10})^*(\kappa) = \beta(d(\tau_0, \tau_1)) \in H^2(M^3; \Z).\]
Then Claims \ref{isk} and \ref{td} prove Theorem \ref{switch}.

\subsection{Proof of Proposition \ref{realize}}

This theorem is an immediate conclusion of the following and the surjectivity of the Bockstein homomorphism $\beta \colon H^1(M^3; \Z_2) \to \Gamma_2(0)$.

\begin{claim}
{\em
Fix a parallelization $\tau_0$ of $M^3$.
For any class $c' \in H^1(M^3; \Z_2)$, there exists a parallelization $\tau_1$ such that $c' = d(\tau_0, \tau_1)$.
}
\end{claim}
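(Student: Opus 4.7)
The plan is to produce $\tau_1$ by twisting $\tau_0$ by a suitably chosen smooth map $g\colon M^3\to\SO(3)$. Since $TM^3$ is trivial, the set of parallelizations is a torsor over the group of smooth maps $M^3\to\SO(3)$: given $\tau_0$ and $g$, the composition $\tau_1 \coloneqq (\mathrm{id}_{M^3}\times g)\circ\tau_0$ is again a parallelization, and all parallelizations of $M^3$ arise this way. Under this correspondence, the transition cocycle relating $\tau_0$ and $\tau_1$ (in the sense of \S \ref{subsection:setup-of-app}) is exactly $g$, so Claim \ref{td} gives $d(\tau_0,\tau_1)=g^*(\lambda)$. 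Hence the proposition reduces to the surjectivity of
\[
\Psi\colon [M^3,\SO(3)] \longrightarrow H^1(M^3;\Z_2),\qquad [g]\mapsto g^*(\lambda).
\]

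To establish surjectivity, I would exploit that $\SO(3)\cong\mathbb{R}P^3$ is precisely the 3-skeleton of $K(\Z_2,1)\cong\mathbb{R}P^\infty$, and that $\lambda\in H^1(\SO(3);\Z_2)$ is the restriction of the fundamental class $\iota\in H^1(K(\Z_2,1);\Z_2)$ along the inclusion $\mathbb{R}P^3\hookrightarrow\mathbb{R}P^\infty$. Any class $c'\in H^1(M^3;\Z_2)$ is classified by some map $\varphi\colon M^3\to K(\Z_2,1)$ with $\varphi^*(\iota)=c'$. Since $\dim M^3=3$, cellular approximation applied to a CW model of $M^3$ deforms $\varphi$ into the 3-skeleton $\mathbb{R}P^3=\SO(3)$, producing a continuous (and then, after a standard smoothing, smooth) map $g\colon M^3\to\SO(3)$ with $g^*(\lambda)=c'$. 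Taking $\tau_1\coloneqq(\mathrm{id}\times g)\circ\tau_0$ then furnishes the desired parallelization with $d(\tau_0,\tau_1)=c'$.

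The main point requiring care is the cellular approximation step: it works cleanly here precisely because $\dim M^3$ matches the dimension below which $\mathbb{R}P^3\hookrightarrow\mathbb{R}P^\infty$ is an equivalence, so no higher-dimensional obstructions intervene. The identification in the first paragraph, matching the torsor action of $g$ with the transition cocycle appearing in Claim \ref{td}, is then a routine unwinding of the conventions of \S \ref{subsection:setup-of-app} and requires no further input.
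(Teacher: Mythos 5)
Your proposal is correct. Both your argument and the paper's reduce the claim to producing a smooth map $g \colon M^3 \to \SO(3)$ with $g^*(\lambda) = c'$ and then declaring $\tau_1$ to be the twist of $\tau_0$ by $g$; the difference lies only in how $g$ is produced. The paper builds $g$ by hand: it prescribes $g$ on the $1$-skeleton of $M^3$ loop by loop (mapping a loop homeomorphically onto $\SO(2) \subset \SO(3)$ exactly when $c'$ evaluates to $1$ on it, and null-homotopically otherwise) and then extends over the $2$- and $3$-cells using the vanishing of $\pi_1(\R P^2, \SO(2))$ and $\pi_2(\SO(3), \R P^2)$. You instead invoke the identification $H^1(M^3;\Z_2) \cong [M^3, K(\Z_2,1)]$ and compress the classifying map into the $3$-skeleton $\R P^3 \cong \SO(3)$ of $\R P^\infty$ by cellular approximation. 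The two arguments are obstruction-theoretically equivalent --- your compression step is the paper's extension step packaged through the Eilenberg--MacLane formalism --- but yours is shorter and makes the role of $\dim M^3 = 3$ transparent, at the cost of being less explicit about what $g$ actually looks like. One cosmetic point worth noting: the transition function from $\tau_0$ to your $\tau_1$ is $g$ or $g^{-1}$ depending on the convention of \S \ref{subsection:setup-of-app}, but since inversion on $\SO(3)$ acts trivially on $H^1(\SO(3);\Z_2) \cong \Z_2$, this does not affect the conclusion $d(\tau_0,\tau_1) = g^*(\lambda) = c'$.
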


\begin{proof}
We regard $c' \in H^1(M^3; \Z_2) \cong \Hom(\pi_1(M^3), \Z_2)$.
We will find a map from $M^3$ to $\SO(3)$ which becomes a transition function from $\tau_0$ to a desired $\tau_1$.
We make a skeletonwise construction as follows.
On the 1-skeleton of $M^3$, we define a map $g_{01}' \colon \mathrm{sk}_1 M^3 \to \SO(3)$ so that for each simple loop $\gamma$ in the 1-skeleton of $M^3$, 
\begin{itemize} 
\item if $c'(\gamma) = 1$, then $g_{01}'|_\gamma \colon \gamma \to \SO(3)$ is a homeomorphism onto $\SO(2)$;
\item if $c'(\gamma) = 0$, then $g_{01}'|_\gamma \colon \gamma \to \SO(3)$ is null-homotopic.
\end{itemize}
We can extend $g_{01}'$ to a map $g_{01} \colon M^3 \to \SO(3)$, since each of homotopy groups
\[\pi_1(\mathrm{sk}_2 \SO(3), \mathrm{sk}_1 \SO(3)) = \pi_1(\R P^2, \SO(2)), \quad \pi_2(\SO(3), \mathrm{sk}_2 \SO(3)) = \pi_2(\SO(3), \R P^2)\]
vanishes.
We now define $\tau_1$ so that $g_{01}$ is the transition function from $\tau_0$ to $\tau_1$.
Then it holds that $c' = d(\tau_0, \tau_1)$ by construction.
\end{proof}

\end{document}